\newtheorem{theorem}{Theorem}[section]
\newtheorem{lemma}[theorem]{Lemma}
\newtheorem{corollary}[theorem]{Corollary}
\newtheorem{question}[theorem]{Question}
\newtheorem{remark}[theorem]{Remark}
\newtheorem{proposition}[theorem]{Proposition}
\newtheorem{example}[theorem]{Example}
\newtheorem{fact}[theorem]{Fact}
\newtheorem{claim}[theorem]{Claim}
\newtheorem{conjecture}[theorem]{Conjecture}
\numberwithin{equation}{section}
\newcommand{\NN}{\mathbb{N}}
\newcommand{\CC}{\mathcal{C}}
\newcommand{\DD}{\mathcal{D}}
\newcommand{\BB}{\mathcal{B}}
\newcommand{\UU}{\mathcal{U}}
\newcommand{\WW}{\mathcal W}
\newcommand{\EE}{{\mathcal E}}
\newcommand{\FF}{\mathcal{F}}
\newcommand{\RR}{\mathbb{R}}
\newcommand{\Nn}{\mathcal{N}}
\newcommand{\PP}{\mathbb{P}}
\newcommand{\AAA}{\mathcal A}
\newcommand{\cantor}{2^{\omega}}
\newcommand{\weakstar}{\ensuremath{weak^*}}
\newcommand{\Int}{\operatorname{int}}
\newcommand{\diam}{\operatorname{diam}}
\newcommand{\eps}{\varepsilon}
\renewcommand{\phi}{\varphi}
\newcommand{\Q}{\mathbb Q}
\newcommand{\ball}{\mathbf B}
\newcommand{\sphere}{\mathbf S}
\title[Networks for the weak topology 0f Banach and Fr\'echet spaces]{Networks for the weak topology of Banach and Fr\'echet spaces}
\author{S. Gabriyelyan}
\thanks{The first named author was partially supported by Israel Science Foundation grant 1/12.}
\address{Department of Mathematics, Ben-Gurion University of the Negev,
Beer-Sheva P.O. 653, Israel}
\email{saak@math.bgu.ac.il}
\author{J. K{\c{a}}kol}
\thanks{The second named author was supported by Generalitat
Valenciana, Conselleria d'Educaci\'{o}, Cultura i Esport, Spain, Grant
PROMETEO/2013/058. Part of this work has been done when the second named  author visited Department of Mathematics Czech Academy of Science as a Visiting Professor, April 2014.}
\address{Faculty of Mathematics and Informatics A. Mickiewicz University, $61-614$ Pozna{\'n}, Poland.}
\email{kakol@amu.edu.pl}
\author{W. Kubi\'s}
\thanks{The third named author was supported by GA\v CR grant No. P201 14 07880S}
\address{Institute of Mathematics,
Academy of Sciences of the Czech Republic, \v Zitn\'a 25, 115 67 Praha 1, Czech Republic}
\email{kubis@math.cas.cz}
\author{W. Marciszewski}
\thanks{The fourth named author was supported by National Science Center research grant DEC-2012/07/B/ST1/03363}
\address{Institute of Mathematics, University of Warsaw, Banacha 2, 02--097 Warszawa, Poland}
\email{wmarcisz@mimuw.edu.pl}
\dedicatory{Dedicated to the Memory of
Professor Manuel Valdivia}
\subjclass[2000]{Primary 46A03, 54H11; Secondary 22A05,  54C35}
\keywords{Fr\'{e}chet space, weakly $\aleph$ locally convex space, $\aleph$-space, $\aleph_0$-space, space of continuous functions}
\begin{document}

\begin{abstract}
We start the systematic study of Fr\'{e}chet spaces which are $\aleph$-spaces in the weak topology. A topological space $X$ is an $\aleph_0$-space or an $\aleph$-space if $X$ has a countable $k$-network or  a $\sigma$-locally finite $k$-network, respectively. We are motivated by the following result of Corson (1966): If the space $C_{c}(X)$ of continuous real-valued functions on a Tychonoff space $X$ endowed with the  compact-open topology is a Banach space, then $C_{c}(X)$ endowed  with the weak topology is an $\aleph_0$-space if and only if $X$ is countable. We extend Corson's result as follows: If the space $E:=C_{c}(X)$ is a Fr\'echet lcs, then  $E$ endowed with its weak topology $\sigma(E,E')$  is an  $\aleph$-space  if and only if $(E,\sigma(E,E'))$ is an $\aleph_0$-space if and only if $X$ is countable. We obtain a necessary and some sufficient conditions on a Fr\'echet lcs to be an $\aleph$-space in the weak topology. We prove that a reflexive Fr\'echet lcs $E$  in the weak topology  $\sigma(E,E')$ is  an $\aleph$-space  if and only if  $(E,\sigma(E,E'))$ is an $\aleph_0$-space if and only if $E$ is separable. We show however that the nonseparable Banach space $\ell_{1}(\mathbb{R})$   with the weak topology is an $\aleph$-space.
\end{abstract}

\maketitle

\section{Introduction}

Topological properties of a  locally convex space (lcs for short) $E$ in the weak topology $\sigma(E,E')$ are of the importance and have been intensively studied from many years (see  \cite{kak,bonet}).
Corson (1961) started a systematic study of certain topological properties of the weak topology of Banach spaces. This line of research provided more general classes such as reflexive Banach spaces, weakly compactly generated Banach spaces ($WCG$ Banach spaces)  and the class of weakly $K$-analytic and weakly $K$-countably determined Banach spaces. We refer the reader to \cite{fabian} and \cite{kak} for many references and facts.

Although  $(E,\sigma(E,E'))$ is never a  metrizable space  for a separable infinite dimensional normed $E$, every $\sigma(E,E')$-compact set is $\sigma(E,E')$-metrizable (see \cite[Corollary 4.6]{kak} or \cite[Proposition 3.29]{fabian}).
Moreover, for many natural and important classes of separable metrizable lcs $E$, the space $(E,\sigma(E,E'))$ is a generalized metric space of some type (see \cite{BBK,GKKLP,Mich}).
Such types of topological spaces are defined by different types of networks.
The concept of network is one of a well recognized good tool, coming from the pure set-topology, which turned out to be of great importance to study successfully renorming theory in Banach spaces, see the survey paper \cite{cascales}; especially  \cite[Theorem 13]{cascales} for $\sigma(E,F)$-slicely networks.

Following Michael \cite{Mich}, a family $\Nn$ of subsets of a topological space $X$ is called a {\em $k$-network in $X$} if whenever $K\subset U$ with $K$ compact and $U$ open in $X$, then $K\subset \bigcup\FF\subset U$ for some finite $\FF\subset\Nn$. A topological space $X$ is said to be an {\em $\aleph_0$-space} if $X$ is regular and  has a countable $k$-network \cite{Mich}.
It is known that a regular space is an $\aleph_{0}$-space if and only if it is a continuous image of a separable metric space under a compact-covering mapping (\cite{Mich}). Every $\aleph_{0}$-space is  separable and Lindel\"of.
It is known that every Banach space  $E$ whose strong dual $E'$  is separable is a {\it weakly $\aleph_{0}$-space}, i.e. $E$ with the weak topology $\sigma(E,E')$ is an $\aleph_{0}$-space,  see \cite{Mich} and  \cite{BBK} (or \cite{GKKLP} for more general facts  for \emph{Fr\'echet lcs}, i.e. metrizable and complete lcs).

 O'Meara \cite{OMe2} generalized  the concept of $\aleph_0$-spaces as follows: A topological space $X$ is called an {\em $\aleph$-space} if it is regular and has a $\sigma$-locally finite $k$-network. Any metrizable space is an $\aleph$-space and all compact sets in $\aleph$-spaces are metrizable. For further results, see \cite{gruenhage}.
The  study of  those locally convex  spaces $E$ which are  weakly $\aleph$-spaces (i.e. under the weak topology $E$ has a $\sigma$-locally finite $k$-network) is begun here for the important particular case of Fr\'echet lcs.

If $X$ is a Tychonoff space, $C_c(X)$ (resp. $C_p(X)$) denotes the space $C(X)$ of all continuous functions on $X$ endowed with the compact-open (resp. pointwise) topology. It is well known that $C_{c}(X)$ is metrizable if and only if $X$ is \emph{hemicompact}, i.e. $X$ admits a fundamental sequence of compact sets, see \cite{arens}.
Moreover, $C_{c}(X)$ is complete if and only if $X$ is a $k_{R}$-space, see \cite{bonet}.
Note that $C_p(X)$ is an $\aleph$-space if and only if $X$ is countable \cite{Sak}. Corson \cite{Mich} proved the following interesting result (*): If $K$ is  compact, the Banach space $C(K)$ is a weakly  $\aleph_0$-space if and only if $K$ is countable.
Our main result extends Corson's  theorem.
\begin{theorem} \label{tMain}
A Fr\'echet lcs  $C_{c}(X)$ is a weakly  $\aleph$-space if and only if $C_{c}(X)$ is a weakly $\aleph_{0}$-space if and only if $X$ is countable.
\end{theorem}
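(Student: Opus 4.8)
The plan is to prove the three statements equivalent by the cycle ``weakly $\aleph_0\Rightarrow$ weakly $\aleph\Rightarrow X$ countable $\Rightarrow$ weakly $\aleph_0$''. The first implication is immediate: a countable family is trivially $\sigma$-locally finite, so every $\aleph_0$-space is an $\aleph$-space. Throughout I use that, $C_c(X)$ being a Fr\'echet lcs, the space $X$ is hemicompact and a $k_{\mathbb R}$-space, so I may fix an increasing fundamental sequence $(K_n)$ of compact sets with $X=\bigcup_n K_n$; in particular, if $X$ is uncountable then some $K_n$ is uncountable, which is the configuration I intend to exploit.

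For the implication $X$ countable $\Rightarrow$ weakly $\aleph_0$, I would argue as follows. If $X$ is countable then each $K_n$ is a countable compact space, hence scattered, so every Radon measure on $K_n$ is purely atomic and $C(K_n)'=M(K_n)\cong\ell_1(K_n)$ is separable. The strong dual of $C_c(X)$ is the regular inductive limit $\varinjlim_n M(K_n)=\bigcup_n M(K_n)$, a countable union of separable Banach spaces, and is therefore separable. By the fact recalled in the introduction (see \cite{Mich,BBK}, or \cite{GKKLP} in the Fr\'echet setting), a Fr\'echet lcs with separable strong dual is a weakly $\aleph_0$-space, which closes the cycle.

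The substance is the implication weakly $\aleph\Rightarrow X$ countable, which I would prove contrapositively. Assume $X$ uncountable and fix an uncountable compact $K=K_n\subseteq X$. Since the $\aleph$-property passes to arbitrary subspaces (a $\sigma$-locally finite $k$-network restricts, by intersection, to one on any subspace, and compact sets and open sets restrict correctly), it suffices to locate inside $(C_c(X),\sigma(E,E'))$ a single subspace that is \emph{not} an $\aleph$-space. The natural source of such a subspace is Corson's proof of $(*)$: using that $X$ is Tychonoff, one separates the points of $K$ by functions in $C(X)$ and transfers Corson's pointwise combinatorics on the uncountable compactum $K$ into a configuration of genuine elements of $C_c(X)$ whose weak-topology behaviour refutes the existence of a $\sigma$-locally finite $k$-network. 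One guiding check is that all compact subsets of an $\aleph$-space are metrizable, which for $K$ metrizable (so $C(K)$ separable, hence weakly compactly generated and weakly Lindel\"of) already yields, via the fact that in a Lindel\"of space every $\sigma$-locally finite family is countable, the stronger conclusion of being weakly $\aleph_0$, and then $(*)$ forces $K$ countable.

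The hard part will be upgrading this from the cases covered by $(*)$ to the full statement, and there are two distinct upgrades. First, the passage from the pointwise topology of $C_p(K)$ (treated by Sakai \cite{Sak}) to the strictly finer weak topology $\sigma(E,E')$, which changes both the available compact sets and the open sets and so cannot be handled by a mere coarsening argument. Second, and more seriously, the weakening from the $\aleph_0$-property to the $\aleph$-property: non-separability alone is no obstruction, since the nonseparable space $\ell_1(\mathbb R)$ is weakly $\aleph$, so the failure of weak $\aleph$-ness for $C(K)$ must be read off the global $k$-network rather than from separability or from a single non-metrizable compact set. I expect the decisive step to be the paper's necessary condition for a Fr\'echet lcs to be weakly $\aleph$, applied to the stress instance $X=K=\beta\mathbb N$, where $C_c(X)=\ell_\infty$ is neither weakly Lindel\"of nor (at least not obviously) a carrier of a non-metrizable weakly compact set; producing from an uncountable almost disjoint family on $K$ a subspace of $(C_c(X),\sigma(E,E'))$ admitting no $\sigma$-locally finite $k$-network, and then invoking heredity, is where the real work lies.
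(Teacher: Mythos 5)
Your cycle is set up correctly and the two easy legs are sound: weakly $\aleph_0\Rightarrow$ weakly $\aleph$ is trivial, and your argument that a countable $X$ gives a separable strong dual (each $K_n$ countable compact, so $C(K_n)'\cong\ell_1(K_n)$ is norm separable, and $E'=\bigcup_n M(K_n)$ is a countable union of strongly separable pieces), whence weak $\aleph_0$-ness by Proposition \ref{pro}(i), is essentially the paper's. The problem is the leg you yourself flag as ``where the real work lies'': you never prove that weakly $\aleph$ forces $X$ countable, you only describe two obstacles and guess at a strategy (Corson-style combinatorics transferred to $\sigma(E,E')$, almost disjoint families on $\beta\NN$) that the paper does not use and that you do not carry out. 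This is a genuine gap, not a routine verification. There is also a directional flaw in your ``guiding check'': to conclude anything about $C(K)$ you need it to sit inside $(C_c(X),\sigma(E,E'))$ as a \emph{subspace} (heredity of the $\aleph$-property goes downward), but the restriction map $C_c(X)\to C(K)$ is a quotient, not an embedding. The paper gets around this by going the other way: if $K$ is non-scattered there is a continuous surjection $f:K\to[0,1]$, its Tietze extension $\tilde f:X\to[0,1]$ is compact-covering, and the adjoint $h\mapsto h\circ\tilde f$ embeds $C[0,1]$ into $C_c(X)$, so weak $\aleph$-ness is inherited by $C[0,1]$; since $C[0,1]$ is weakly Lindel\"of, Proposition \ref{pLindel} upgrades this to weak $\aleph_0$-ness, contradicting Corson's theorem (Lemma \ref{l-Normal}).

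The part of the argument you have no substitute for is the bridge from ``all compact subsets of $X$ are scattered'' to ``weakly $\aleph$ implies weakly $\aleph_0$''. In the paper this runs: scattered compacta imply $C_c(X)$ contains no copy of $\ell_1$ (Lemma \ref{3-Normal}, via Asplundness of $C(K)$ for scattered $K$); by Rosenthal's theorem bounded sets are then weakly Fr\'echet--Urysohn and $(\alpha_4)$; the metrization theorem for $\aleph$-spaces (Theorem \ref{tMetr-Aleph}: Fr\'echet--Urysohn $+$ $(\alpha_4)$ $+$ $\aleph$ implies metrizable) applied to bounded sets yields trans-separability of $E'$ (Theorem \ref{dual-Aleph}); and the density condition, which every Fr\'echet $C_c(X)$ satisfies, upgrades trans-separability to separability of $E'$ (Theorem \ref{tDensity}, Corollary \ref{quasi}), closing the loop through Proposition \ref{pro}(i). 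Nothing in your proposal plays the role of this metrization step, and non-separability considerations alone cannot replace it (as you correctly note via $\ell_1(\RR)$). Your $\beta\NN$ ``stress instance'' is in fact already disposed of by the scatteredness step, since $\beta\NN$ is not scattered. Finally, the concluding step of the countability direction also needs the observation that weak $\aleph_0$-ness makes $C_c(X)$ separable, hence puts a weaker metrizable topology on $X$, so the scattered $K_n$ are metrizable scattered compacta and therefore countable; your sketch stops short of this as well.
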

If $X$ is a countable and {\it locally compact} space, Theorem \ref{tMain} guarantees that $C_c(X)$ is even a weakly  $\aleph_0$-space. We note the following question: Is $C_c(X)$ a weakly  $\aleph$-space for any countable Tychonoff space $X$? Having in mind  that the weak topology of $C_{c}(X)$ lies  between the compact open topology and the pointwise one, the question is especially interesting for the case $X$ is an $\aleph_0$-space. Recall that for such $X$  the spaces  $C_c(X)$ and $C_{p}(X)$ are  $\aleph_0$-spaces by \cite{Mich}, and   $C_p(X)$  is even separable and metrizable. In Section \ref{sCount} we prove that $C_c(X)$ is a weakly  $\aleph_0$-space for any countable $\aleph_0$-space $X$.

Although  $\aleph$-spaces  and $\aleph_{0}$-spaces are essentially different, in the class of Lindel\"of spaces they coincide, see Proposition \ref{pLindel} below. Therefore, it is interesting also to  describe  possible large classes of Fr\'{e}chet (or Banach) spaces for which the both concepts coincide for the weak topology.
We observe that any $WCG$  Banach space is a weakly $\aleph$-space if and only if it is a weakly $\aleph_{0}$-space (Corollary \ref{tal}).  We show   that a Banach space $E$ not containing a copy of $\ell_1$ is a weakly $\aleph$-space if and only if it is a weakly $\aleph_{0}$-space if and only if the strong dual $E'$ of $E$ is separable  (Corollary \ref{c-Equiv}).  This extends a corresponding result for the case $\aleph_{0}$-spaces (see  \cite[\S 12]{BBK} and \cite{GKKLP}). Consequently, for any  $1<p<\infty$ and an uncountable  set $\Gamma$ the (reflexive) Banach space $\ell_{p}(\Gamma)$ is not a weakly $\aleph$-space. We show even more: A reflexive Fr\'echet lcs $E$ is a weakly $\aleph$-space if and only if $E$ is a weakly $\aleph_{0}$-space if and only if $E$ is separable (Corollary \ref{fre}). These results motivate the following  natural question: \textit{Does there exist a nonseparable Banach space $E$ which is an $\aleph$-space in the weak topology of $E$?} We answer this question in the affirmative by proving the following theorem.

\begin{theorem}\label{ThmElloneAlephCont}
The Banach space $\ell_1(\Gamma)$ is an $\aleph$-space in the weak topology if and only if the cardinality of $\Gamma$ does not exceed the continuum.
\end{theorem}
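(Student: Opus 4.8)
The plan is to run both implications through one structural reduction and then to treat the easy half (necessity) in full and the hard half (sufficiency) as a construction whose single genuine difficulty I will isolate. Two preliminary facts are shared. First, $\ell_1(\Gamma)$ has the Schur property: any weakly convergent sequence is supported on a countable $A\subseteq\Gamma$, where the classical $\ell_1$ theorem applies, so weak and norm sequential convergence coincide; by Eberlein--\v Smulian this forces every weakly compact subset of $\ell_1(\Gamma)$ to be norm compact, and on such a set the weak topology, being a coarser Hausdorff topology than the compact norm topology, coincides with it (so all weakly compact sets are metrizable). I will also use the uniformly-small-tails description: $K$ is norm compact iff it is norm bounded, norm closed and for each $\varepsilon>0$ there is finite $F\subseteq\Gamma$ with $\sum_{\gamma\notin F}|x(\gamma)|\le\varepsilon$ for all $x\in K$. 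Second, the crux: $|\Gamma|\le\mathfrak c$ iff $\ell_\infty(\Gamma)=\ell_1(\Gamma)'$ contains a countable total set $\Phi$ (i.e.\ $\Phi^{\perp}=\{0\}$), i.e.\ iff $(\ell_1(\Gamma),\sigma(\ell_1(\Gamma),\ell_\infty(\Gamma)))$ is submetrizable. For ``$\Leftarrow$'', a countable total $\Phi$ makes $\gamma\mapsto(f(\gamma))_{f\in\Phi}$ an injection of $\Gamma$ into $\mathbb R^{\omega}$, so $|\Gamma|\le\mathfrak c$. For ``$\Rightarrow$'', identify $\Gamma$ with a subset of $2^{\omega}$ and for a finite string $s$ let $g_s\in\ell_\infty(\Gamma)$ be the indicator of the cylinder $\Gamma_s=\{\gamma: s\text{ is an initial segment of }\gamma\}$; since each $z\in\ell_1(\Gamma)$ is countably supported, $\sum_{\gamma\restriction n=\gamma_0\restriction n}z(\gamma)\to z(\gamma_0)$, so $\langle z,g_s\rangle=0$ for all $s$ forces $z=0$, and $\{g_s\}$ is a countable total family.

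For necessity, suppose $(\ell_1(\Gamma),w)$ is an $\aleph$-space. An $\aleph$-space is a $\sigma$-space and therefore has a $G_\delta$-diagonal \cite{gruenhage}, so by Ceder's theorem there is a sequence $(\mathcal U_n)$ of open covers with $\bigcap_n\operatorname{St}(x,\mathcal U_n)=\{x\}$ for every $x$. Applying this at $x=0$, the sets $O_n:=\operatorname{St}(0,\mathcal U_n)$ are weak neighbourhoods of $0$ with $\bigcap_nO_n=\{0\}$. Shrinking each $O_n$, choose finite $\Phi_n\subseteq\ell_\infty(\Gamma)$ and $\varepsilon_n>0$ with $\{x:|\langle x,f\rangle|<\varepsilon_n\ \forall f\in\Phi_n\}\subseteq O_n$, and set $\Phi=\bigcup_n\Phi_n$. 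If $z\in\Phi^{\perp}$ then $\langle z,f\rangle=0<\varepsilon_n$ for all $f\in\Phi_n$, so $z\in O_n$ for every $n$, whence $z\in\bigcap_nO_n=\{0\}$. Thus $\Phi$ is countable and total, and $|\Gamma|\le\mathfrak c$ by the reduction. The point to stress is that linearity does the work: it is the inclusion $\Phi^{\perp}\subseteq O_n$, not mere countable pseudocharacter of $0$, that yields totality.

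For sufficiency, assume $|\Gamma|\le\mathfrak c$; since the $\aleph$-property is hereditary I may take $\Gamma=2^{\omega}$. Let $\rho$ be the separable metric pulled back by $x\mapsto(\langle x,g_s\rangle)_s$. Three features should drive the construction: $\rho$ is coarser than $w$, so every $\rho$-locally finite family is automatically $w$-locally finite; on each norm compact set the norm, $w$ and $\rho$ topologies agree; and if $N$ has norm diameter $<\delta/M$ then $N$ is contained in any weak basic neighbourhood $\{y:|\langle y-c,f_i\rangle|<\delta\}$ with $\|f_i\|_\infty\le M$. My plan is to build $\mathcal N=\bigcup_{n,N}\mathcal N_{n,N}$ whose members are norm-bounded, small-tail sets indexed by a finite family of level-$N$ cylinders together with rational-grid data of precision $1/n$; the uniformly-small-tails description then lets a compact $K\subseteq U$ be covered by finitely many members of a single $\mathcal N_{n,N}$ that all lie inside $U$, giving the $k$-network property. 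The weakly continuous functionals $\langle\cdot,\mathbf 1\rangle$ and $\langle\cdot,g_s\rangle$, which are bounded away from $0$ on a member carrying mass $\ge 1/n$ in a cylinder, are what should force members attached to distinct cylinder patterns to be weakly separated, and for a fixed $x_0$ only finitely many level-$N$ cylinders carry $x_0$-mass above the tail threshold.

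The hard part will be exactly the weak local finiteness. Naive members fail sharply: small norm balls, or single-coordinate bumps $q\,e_\gamma$, cluster uncountably at a concentrated point, because $x_0+q\,e_\gamma$ cannot be distinguished from $x_0$ by any fixed finite set of functionals once $\gamma$ ranges through a cylinder of $x_0$'s branch — agreement of level-$N$ cylinder masses only confines $\gamma$ to a cylinder, never to a point, at any finite resolution. What must be exploited is that a member's mass inside a cylinder $\Gamma_s$ registers on $\langle\cdot,g_s\rangle$, so that choosing the witnessing weak neighbourhood of $x_0$ from the finitely many cylinder-mass functionals at the levels separating the (finitely many) significant coordinates of $x_0$ confines the admissible cylinder patterns of nearby members to a fixed finite set; making this confinement genuinely finite, rather than merely cylinder-bounded, is the delicate step and is precisely where the branch representation $\Gamma\subseteq 2^{\omega}$ (equivalently $|\Gamma|\le\mathfrak c$) is indispensable, the countably many $g_s$ resolving coordinates cofinally along each branch. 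I expect the bulk of the technical work of the theorem to reside in carrying out this separation uniformly over a single stratum $\mathcal N_{n,N}$.
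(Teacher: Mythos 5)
Your necessity argument is sound and is essentially the paper's: an $\aleph$-space is a $\sigma$-space, hence has countable pseudocharacter, and at $0$ this produces a countable total subset of $\ell_\infty(\Gamma)$, forcing $|\Gamma|\le\mathfrak c$ (the paper routes this through Proposition \ref{p-Nec} and Lemma \ref{l-PsChar}, but the content is the same). The reduction of sufficiency to $\Gamma=\cantor$ and the role of the Schur property are also correctly identified.

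The sufficiency direction, however, is not a proof: you explicitly defer the one step that carries all the weight, namely the weak $\sigma$-local finiteness (in fact $\sigma$-discreteness) of the proposed network, and the mechanism you sketch for it does not work. You propose to separate members attached to distinct ``cylinder patterns'' by the functionals $\langle\cdot,g_s\rangle$, while simultaneously observing the fatal obstruction yourself: finitely many cylinder functionals confine a perturbation $x_0+q\,e_\gamma$ only to a cylinder, never to a point, so uncountably many members still cluster at $x_0$. You do not resolve this, and no choice of finitely many $g_s$ can; the discreteness cannot come from resolving coordinates along branches at all. The paper's actual key idea is orthogonal to cylinder combinatorics: stratify $\ell_1(\cantor)$ by norm annuli $M(r,\eps)=\{x: r-\eps<\|x\|\le r\}$ and exploit the additivity $\|v\|=\|p_F(v)\|+\|p_{\cantor\setminus F}(v)\|$ to show (Lemma \ref{Lmsgribwri}) that every $x\in M(r,\eps)$ has a \emph{pointwise} open neighborhood $V$ with $\diam\bigl(V\cap M(r,\eps)\bigr)\le 4\eps$ in the \emph{norm}: if $F$ captures all but $\eps$ of $\|x\|$, then any $y$ in the annulus close to $x$ on $F$ has tail mass $\le\eps$. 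Combining this with Stone's theorem (a norm base $\BB=\bigcup_n\BB_n$ with $\BB_n$ uniformly $1/k_n$-discrete), the traces of $\BB_n$ on a suitable annulus form a weakly discrete family, since a weak neighborhood of norm-diameter $<1/k_n$ meets at most one member of $\BB_n$. The remaining ingredient you are missing is that the annulus condition $\|x\|>r$ is not weakly closed, so one needs the weak$^*$ separability of $\sphere_{\ell_\infty(\cantor)}$ (proved via clopen partitions of the Cantor set --- this, not totality of $\{g_s\}$, is where $|\Gamma|\le\mathfrak c$ enters) to write $\{\|x\|>r\}$ as a countable union of weak interiors of weakly closed half-spaces $\{\phi\ge r+1/n\}$. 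Without these two devices your stratification $\mathcal N_{n,N}$ has no route to weak local finiteness.
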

So, the nonseparable Banach space $\ell_1(\mathbb{R})$ endowed with the weak topology is an $\aleph$-space  but is not an $\aleph_0$-space.  Moreover, the space $\ell_1(\mathbb{R})$ in the weak topology is not normal, see Proposition \ref{exa:foged}.


\section{Some definitions and known facts}


Recall (see \cite{Gao}) that a family $\Nn$ of subsets of a topological space $X$  is a {\em $cs^\ast$-network   at a point $x\in X$} if for each sequence $(x_n)_{n\in\NN}$ in $X$ converging to  $x$ and for each neighborhood $O_x$ of $x$ there is a set $N\in\mathcal{N}$ such that $x\in N\subset O_x$ and the set $\{n\in\NN :x_n\in N\}$ is infinite (where $\NN =\{ 1,2,\dots \}$); $\Nn$ is a {\em $cs^\ast$-network}  in $X$ if $\mathcal{N}$ is a $cs^\ast$-network at each point $x\in X$.
The smallest size $|\mathcal{N}|$ of a $cs^\ast$-network $\mathcal{N}$ at $x$ is called the {\em $cs^\ast$-character of $X$ at the point $x$} and is denoted by $cs^\ast_\chi(X,x)$. The cardinal $cs^\ast_\chi(X)=\sup\{ cs^\ast_\chi(X,x): x\in X\}$ is called the {\em $cs^\ast$-character} of  $X$.
Recall also (see \cite{Mich}) that a point $x$ in a topological space $X$ is called an {\it $r$-point} if there is a sequence $\{ U_n\}_{n\in\NN}$ of neighborhoods of $x$ such that if $x_n\in U_n$, then $\{ x_n\}_{n\in\NN}$ has compact closure; call $X$ an {\it $r$-space} if all of its points are $r$-points. The first countable spaces and the locally compact spaces are $r$-spaces.
\begin{theorem}[\cite{OMe}] \label{fOMe}
A topological space $X$ is metrizable if and only if it is an $\aleph$-space and an $r$-space.
\end{theorem}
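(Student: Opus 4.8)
The plan is to prove the characterization of metrizability via $\aleph$-spaces and $r$-spaces (Theorem \ref{fOMe}) by establishing both implications separately, where the real work lies in the harder direction.

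The easy direction is that every metrizable space is both an $\aleph$-space and an $r$-space. That a metrizable space is an $\aleph$-space is already asserted in the text preceding the statement, so I may quote it directly; concretely, one takes a $\sigma$-locally finite base (which exists by the Nagata--Smirnov metrization theorem, or more elementarily by Stone's theorem on paracompactness of metric spaces) and observes that a base is in particular a $k$-network. That a metrizable space is an $r$-space follows because it is first countable: given $x$, fix a countable decreasing neighborhood base $\{U_n\}_{n\in\NN}$ with $\diam(U_n)\to 0$ in a fixed compatible metric; then any choice $x_n\in U_n$ forms a sequence converging to $x$, so its closure (adjoining the single limit point $x$) is compact. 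This is already observed in the paragraph defining $r$-spaces.

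For the converse---the substantive direction---I would assume $X$ is a regular $\aleph$-space that is also an $r$-space and aim to produce a compatible metric, most cleanly by verifying the hypotheses of a known metrization theorem (Nagata--Smirnov), namely that $X$ is regular and has a $\sigma$-locally finite base. The key step is to upgrade the given $\sigma$-locally finite $k$-network $\Nn=\bigcup_{n\in\NN}\Nn_n$ (with each $\Nn_n$ locally finite) into a $\sigma$-locally finite \emph{base} by exploiting the $r$-space property. The idea: at each point $x$ the $r$-space structure supplies a sequence $\{U_n\}$ of neighborhoods forcing compactness of any selection; combined with the $k$-network property this should show that suitable finite unions or interiors drawn from the $\Nn_n$ form a neighborhood base at $x$. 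The point is that in an $r$-space the compactness condition lets one convert the $k$-network's control over compact sets into control over neighborhoods of points, so that the interiors $\Int(\bigcup\FF)$ for finite $\FF\subset\Nn$ furnish, at each point, arbitrarily small open sets, and the $\sigma$-local-finiteness is inherited from that of $\Nn$.

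The main obstacle will be precisely this conversion from a $k$-network (a condition about covering compact sets) into a base (a condition about local neighborhood bases at points), and ensuring the resulting refined family remains $\sigma$-locally finite. The $r$-space hypothesis is exactly what is needed to manufacture enough local compactness: without it an $\aleph$-space need not be metrizable, so the argument must genuinely use the neighborhoods $\{U_n\}$ witnessing that $x$ is an $r$-point. I would carry this out by fixing $x$, using the $r$-point sequence to localize, and arguing that if no countable subfamily of the $\Nn_n$ gave a neighborhood base at $x$ one could select points $x_n$ with compact-closure but failing to converge to $x$, contradicting the interplay of the $k$-network property with regularity. Once the $\sigma$-locally finite base is in hand, Nagata--Smirnov yields metrizability and completes the proof.
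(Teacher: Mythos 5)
The paper does not prove this statement at all: it is quoted as O'Meara's metrization theorem with a citation to \cite{OMe}, so there is no in-paper argument to compare against. Judged on its own, your easy direction (metrizable $\Rightarrow$ $\aleph$-space and $r$-space) is complete and correct. The converse, however, is only an outline of the right strategy, and the one step that carries all the content of the theorem is left unexecuted. You assert that the interiors $\Int\left(\bigcup\FF\right)$, for finite $\FF\subset\Nn$, ``should'' form arbitrarily small neighborhoods at each point, but the $k$-network condition applied to the compact set $\{x\}$ only yields $x\in\bigcup\FF\subset O$; it gives no reason why $x$ should lie in the \emph{interior} of $\bigcup\FF$, and producing finite subfamilies whose union is a genuine neighborhood of $x$ is precisely where the $r$-space hypothesis must be used in a delicate way. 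Your proposed contradiction argument also has an unaddressed obstruction: if you choose points $x_n\in U_n$ witnessing the failure, the $r$-point property gives you a selection with compact closure $K$, but $K$ need not be contained in the target open set $O$, so you cannot directly invoke the $k$-network property for the pair $K\subset O$; one has to combine the sets $U_n$ with $O$ and argue more carefully (this is the actual work in O'Meara's paper). The $\sigma$-local-finiteness bookkeeping (finite unions drawn from finitely many levels $\Nn_1,\dots,\Nn_k$, then interiors) is fine, and Nagata--Smirnov is the right endgame, but as written the proposal is a plan for a proof rather than a proof: the ``main obstacle'' you correctly identify is named but not overcome.
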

A topological space $X$ has the {\it property $\left( \alpha_{4}\right) $ at a point $x\in X$} if for any $\{x_{m,n}:\left( m,n\right) \in \mathbb{N}\times \mathbb{N}\}\subset X$ with $\lim_{n}x_{m,n}=x\in X$, $m\in \mathbb{N}$, there exists a sequence $\left( m_{k}\right) _{k}$ of distinct natural numbers and a sequence $\left( n_{k}\right) _{k}$ of natural numbers such that $\lim_{k}x_{m_{k},n_{k}}=x$; $X$ has the {\it property $\left( \alpha_{4}\right) $} or is an {\it $\left( \alpha_{4}\right) $-space} if it has the property $\left( \alpha_{4}\right)$ at each point $x\in X$. Nyikos proved in \cite[Theorem 4]{nyikos} that any Fr\'{e}chet-Urysohn topological group satisfies $\left( \alpha_{4}\right)$. However there are Fr\'{e}chet-Urysohn topological spaces which do not have $\left( \alpha_{4}\right)$ (see for instance Example \ref{exaFUMet}).

 Theorem \ref{fOMe} combined with additional facts from  \cite{BZ} yields also
\begin{theorem} \label{tMetr-Aleph}
An $\aleph$-space $X$  is metrizable if and only if $X$ is a Fr\'{e}chet-Urysohn  $\left( \alpha_{4}\right)$-space.
\end{theorem}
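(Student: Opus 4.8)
The plan is to use O'Meara's characterization in Theorem \ref{fOMe}, which—since $X$ is assumed to be an $\aleph$-space throughout—reduces the metrizability of $X$ to the $r$-space property. I will establish the two implications of the equivalence directly: metrizability easily yields both the Fr\'echet-Urysohn and the $(\alpha_4)$ properties, while conversely these two properties (together with the $\aleph$-space structure) will be shown to force first countability, hence the $r$-space property, hence—by Theorem \ref{fOMe}—metrizability.

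The implication from metrizability is routine and needs neither the $\aleph$-space hypothesis. A metric space is first countable, hence Fr\'echet-Urysohn, and it has property $(\alpha_4)$ by a diagonal argument: if $x_{m,n}\to x$ as $n\to\infty$ for every $m$, choose $n_k$ with $d(x_{k,n_k},x)<1/k$ and set $m_k=k$; then the $m_k$ are distinct and $x_{m_k,n_k}\to x$. This settles the ``only if'' part.

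For the converse, assume $X$ is an $\aleph$-space that is Fr\'echet-Urysohn and satisfies $(\alpha_4)$. The first step is to record that every $\aleph$-space has countable $cs^*$-character. Indeed, passing to closures—which is legitimate in a regular space and preserves both $\sigma$-local finiteness and the $k$-network property (given $K\subset U$, first interpolate an open $U'$ with $K\subset U'\subset\overline{U'}\subset U$ and apply the $k$-network to $K\subset U'$)—we may assume the $\sigma$-locally finite $k$-network $\Nn=\bigcup_m\Nn_m$ consists of closed sets. A short argument with a convergent sequence and a minimal finite subcover, using the closedness of the members, then shows that such an $\Nn$ is in fact a $cs^*$-network; meanwhile local finiteness guarantees that each point lies in only finitely many members of each $\Nn_m$, so that $\{N\in\Nn : x\in N\}$ is a \emph{countable} $cs^*$-network at $x$. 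The decisive second step is the input from \cite{BZ}: a Fr\'echet-Urysohn space of countable $cs^*$-character with property $(\alpha_4)$ is first countable. Applying this to $X$ yields first countability; since first countable spaces are $r$-spaces (as noted before Theorem \ref{fOMe}), Theorem \ref{fOMe} then delivers that $X$ is metrizable.

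The main obstacle is precisely this second step, namely converting the combination ``Fr\'echet-Urysohn $+\,(\alpha_4)\,+$ countable $cs^*$-character'' into a genuine countable neighborhood base at each point. This is the nontrivial content imported from \cite{BZ}; by contrast, the countability of the $cs^*$-character of an $\aleph$-space and the passage from first countability through the $r$-space property to metrizability are elementary.
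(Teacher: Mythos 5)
Your proposal is correct and follows essentially the same route as the paper: reduce to countable $cs^*$-character via a closed $\sigma$-locally finite $k$-network, invoke the Banakh--Zdomskyy result to get first countability from Fr\'echet-Urysohn $+\,(\alpha_4)\,+$ countable $cs^*$-character, and conclude by O'Meara's Theorem \ref{fOMe}. The only differences are cosmetic (you spell out the diagonal argument for $(\alpha_4)$ in metric spaces and the passage to a closed $k$-network, which the paper leaves implicit).
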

\begin{proof}
Clearly, if $X$  is metrizable then it is a Fr\'{e}chet-Urysohn $\left( \alpha_{4}\right)$-space. Conversely, let $X$ be  a Fr\'{e}chet-Urysohn  $\left( \alpha_{4}\right)$-space. Being an $\aleph$-space, $X$ has countable  $cs^\ast$-character (this might be also noticed from the proof of  \cite[Corollary 2.18]{Sak}). Indeed, it immediately follows from the definitions of $k$- and $cs^\ast$-networks that any closed $k$-network is a $cs^\ast$-network. So
it is enough to show that any space $X$ with a $\sigma$-locally finite closed $cs^\ast$-network $\DD =\bigcup_{n\in\NN} \DD_n$ has countable $cs^\ast$-character. Fix $x\in X$. For every $n\in\NN$ set $T_n(x):=\{ D\in\DD_n : x\in D\}$. Since $\DD_n$ is locally finite, the family $T_n(x)$ is finite. So the family $T(x):= \bigcup_{n\in\NN} T_n(x)$ is countable. We show that $T(x)$ is a countable $cs^\ast$-network at $x$. Let $x_n \to x$ and $U$ be a neighborhood of $x$.
Since $\DD$ is a $cs^\ast$-network, there is $k\in\NN$ and $D\in\DD_{k}$ such that $x \in D\subset U$ and $D$ contains infinitely many elements of $\{ x_n\}_{n\in\NN}$.
As $D$ is closed, it contains $x$, so $D\in T_{k}(x)$. Now  \cite[Proposition 6, Lemma 7]{BZ} imply that $X$ is first countable, hence an $r$-space. Finally, $X$ is metrizable by Theorem \ref{fOMe}.
\end{proof}
Since every Fr\'{e}chet-Urysohn topological group satisfies property $(\alpha_4)$ by \cite[Theorem 4]{nyikos},
we obtain
\begin{corollary}
A topological group $G$ is metrizable if and only if $G$ is a Fr\'{e}chet-Urysohn $\aleph$-space.
\end{corollary}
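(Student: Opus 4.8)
The plan is to obtain this as an immediate consequence of Theorem \ref{tMetr-Aleph}. The forward implication requires no work: every metrizable space is Fr\'echet-Urysohn, and, as recorded in the introduction, every metrizable space is an $\aleph$-space; hence a metrizable topological group is in particular a Fr\'echet-Urysohn $\aleph$-space.

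For the converse, suppose $G$ is a topological group that is simultaneously a Fr\'echet-Urysohn space and an $\aleph$-space. To invoke Theorem \ref{tMetr-Aleph} and conclude metrizability, I only need to check that $G$ has property $(\alpha_4)$. This is exactly where the group structure enters: by Nyikos' result \cite[Theorem 4]{nyikos}, every Fr\'echet-Urysohn topological group satisfies $(\alpha_4)$. Thus $G$ is a Fr\'echet-Urysohn $(\alpha_4)$-space, and since it is also an $\aleph$-space, Theorem \ref{tMetr-Aleph} gives that $G$ is metrizable.

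There is essentially no obstacle here; the content of the statement has been front-loaded into Theorem \ref{tMetr-Aleph} and into the cited theorem of Nyikos, and the corollary merely combines them. The only thing one should be careful about is that the topological-group hypothesis is used \emph{solely} to upgrade ``Fr\'echet-Urysohn'' to ``Fr\'echet-Urysohn $(\alpha_4)$'' via \cite[Theorem 4]{nyikos}; without it the implication fails in general, which is precisely why Theorem \ref{tMetr-Aleph} must carry the $(\alpha_4)$ hypothesis for arbitrary spaces and why Example \ref{exaFUMet} (a Fr\'echet-Urysohn space without $(\alpha_4)$) is relevant.
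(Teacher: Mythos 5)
Your proof is correct and follows exactly the paper's route: the paper derives this corollary in one line by combining Theorem \ref{tMetr-Aleph} with Nyikos' result that every Fr\'echet-Urysohn topological group satisfies $(\alpha_4)$, which is precisely your argument. Nothing further is needed.
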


\begin{example} \label{exaFUMet} {\em
Let $V(\aleph_0)$ be the Fr\'{e}chet-Urysohn fan which is obtained by the identifying the limit points of the compact spaces $S_n$, where $S_n = \left\{ 0, \frac{1}{k}\right\}_{k\in\NN} \subset \mathbb{R}$. It is well-known that any compact subset of $V(\aleph_0)$ is contained in a finite union of sequences $S_n$. Hence the natural map from the topological direct sum $\bigoplus_{k\in\NN} S_k$ onto $V(\aleph_0)$ is compact-covering. So $V(\aleph_0)$ is an $\aleph_0$-space \cite{Mich}. Clearly, $V(\aleph_0)$ is not metrizable and not  an $\left( \alpha_{4}\right) $-space (and it  is not an $r$-space by Theorem \ref{fOMe}).}
\end{example}

\section{Some necessary conditions for being an $\aleph$-space}

Recall that a topological space $X$ is called a {\it $\sigma$-space} if $X$ is  regular and  has a $\sigma$-discrete (equivalently, $\sigma$-locally finite) network. If $X$ is regular and has a countable network, $X$ is called a {\it cosmic space}. Clearly  $\aleph$-spaces and cosmic spaces are  $\sigma$-spaces.
It is well known (see \cite{gruenhage}) that any closed subset $H$ of a $\sigma$-space $X$ is a $G_\delta$-set. Indeed, if $\bigcup_{n\in\NN} \DD_n$ is a $\sigma$-discrete closed network for $X$, then the sets $A_n :=\bigcup\{ D\in\DD_n : D\cap H =\emptyset \}$ are closed in $X$ by \cite[1.1.11]{Eng}. As $H=\bigcap_{n\in\NN} (X\setminus A_n)$,  $H$ is a $G_\delta$-set. Consequently, any $\sigma$-space has  countable pseudocharacter;  we denote $\psi(X)=\aleph_0$.

Clearly, every separable Banach space with the Schur property is a weakly $\aleph_{0}$-space.
In Section \ref{SecWitek} we show that $\ell_{1}(\RR)$ is a weakly $\aleph$-space.

It is well known that the dual space of $\ell_p(\Gamma)$ is $\ell_q(\Gamma)$, where $1/p + 1/q =1$. So,  the support of continuous
functionals over  $\ell_p(\Gamma)$ must be countable. We use this fact to prove the following
\begin{example} \label{lp}
Let $\Gamma$ be an infinite set and $E:=\ell_p(\Gamma)$ with  $1<p<\infty$. Then $\psi(E_w)\geq|\Gamma|$, where $E_{w}:=(E,\sigma(E,E'))$.  Hence  $\ell_{p}(\Gamma)$ are not  weakly $\sigma$-spaces for every uncountable $\Gamma$.
\end{example}
\begin{proof}
If $\Gamma$ is countable the assertion is clear. Suppose that $\Gamma$ is uncountable.
Let $\mathcal{U}=\{U_i\}_{i\in I}$ be a family of weakly open neighborhoods of $0$ such that $\bigcap_{i\in I} U_i =\{ 0\}$ and $|I|=\psi(E_w)$. We may assume  that each  $U_i$ has the following standard form
\[
U_i = \{ x\in E: | \chi_{i,k} (x)| <\delta_i, \mbox{ where } \chi_{i,k} \in E' \mbox{ for } 1\leq k \leq m_i \}.
\]
Suppose, for a contradiction, that $|I|< |\Gamma|$. Denote by $J$ the set of all indices $j\in I$ such that the $j$-coordinate is nonzero for some $\chi_{i,k}$. So $|J|=|\NN|\times |\NN| \times |I| < |\Gamma|$. Hence we can find an index $\gamma_0 \in \Gamma\setminus J$. Set $x_0=(r_\gamma)_{\gamma\in \Gamma}$, where $r_\gamma =1$ if $\gamma=\gamma_0$, and $r_\gamma =0$ otherwise. Clearly, $x_0\in E$ and $\chi_{i,k} (x_0) =0$ for all $i\in I$ and every $1\leq k \leq m_i$, that contradicts the choice of the family $\mathcal{U}$. Thus $|I|\geq |\Gamma|$.
\end{proof}

 We  provide some necessary condition for any lcs to be  a weakly $\aleph$-space.
First we prove the following useful observation.
\begin{lemma} \label{l-PsChar}
Let $E$ be a non-trivial lcs. Then $E_w:=(E,\sigma(E,E'))$ has countable pseudocharacter if and only if $E_w$ admits a weaker separable metrizable lcs topology. In particular, $|E|=\mathfrak{c}$ provided $E_{w}$ has countable pseudocharacter.
\end{lemma}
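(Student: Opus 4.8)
The plan is to reduce countable pseudocharacter of $E_w$ to the existence of a countable family of weakly continuous functionals separating the points of $E$, and then to realise such a family as a linear homeomorphic embedding of $E$ into the separable metrizable space $\RR^{\NN}$.

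First I would use that $E_w$ is a topological vector space, hence a homogeneous space, so $\psi(E_w)=\aleph_0$ is equivalent to $\{0\}$ being a $G_\delta$-set, i.e. to the existence of a countable family $\{U_n\}_{n\in\NN}$ of weak neighbourhoods of $0$ with $\bigcap_n U_n=\{0\}$. Shrinking each $U_n$ to a basic weak neighbourhood of the standard form $U_n=\{x: |\chi_{n,j}(x)|<\delta_n,\ 1\le j\le m_n\}$ with $\chi_{n,j}\in E'$, I collect the countable family $\FF=\{\chi_{n,j}\}\subset E'$. The condition $\bigcap_n U_n=\{0\}$ forces $\FF$ to separate the points of $E$: if $x\ne 0$ then $x\notin U_n$ for some $n$, whence $\chi_{n,j}(x)\ne 0$ for some $j$.

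For the forward direction I would then let $\tau_\FF$ be the locally convex topology on $E$ generated by the countable family of seminorms $x\mapsto|\chi(x)|$, $\chi\in\FF$. Since $\FF\subset E'$, the topology $\tau_\FF$ is coarser than $\sigma(E,E')$; since $\FF$ separates points, $\tau_\FF$ is Hausdorff; and being generated by countably many seminorms it is metrizable. Enumerating $\FF=\{\chi_k\}_{k\in\NN}$, the map $T\colon E\to\RR^{\NN}$, $T(x)=(\chi_k(x))_k$, is linear, injective by separation, and by construction carries $\tau_\FF$ exactly onto the subspace topology induced from the product topology of $\RR^{\NN}$. Thus $T$ is a linear homeomorphism of $(E,\tau_\FF)$ onto a subspace of $\RR^{\NN}$; as $\RR^{\NN}$ is separable and metrizable, so is $(E,\tau_\FF)$, yielding the required weaker separable metrizable lcs topology.

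For the converse I would argue that if $\tau\subseteq\sigma(E,E')$ is a weaker metrizable lcs topology, then $\{0\}$ is a $G_\delta$-set in $(E,\tau)$ and hence a $G_\delta$-set in the finer topology $E_w$, so $\psi(E_w)=\aleph_0$ by homogeneity. Finally, for the cardinality statement, the embedding $T$ gives $|E|\le|\RR^{\NN}|=\mathfrak{c}^{\aleph_0}=\mathfrak{c}$, while non-triviality of $E$ provides a one-dimensional subspace, so $|E|\ge\mathfrak{c}$; hence $|E|=\mathfrak{c}$. This lemma is essentially routine, and the only point requiring care is the verification that $T$ is a homeomorphism onto its image, that is, that $\tau_\FF$ coincides with the initial topology induced by $T$; this is immediate once one notes that the sets $T^{-1}(\prod_k V_k)$, with $V_k=\RR$ for all but finitely many $k$, form a neighbourhood base at $0$ for both topologies.
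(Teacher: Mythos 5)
Your proposal is correct and follows essentially the same route as the paper: both extract a countable point-separating family of continuous functionals from the $G_\delta$ representation of $\{0\}$ and use the resulting injective continuous map into $\mathbb{R}^{\mathbb{N}}$ to obtain the weaker separable metrizable topology and the bound $|E|=\mathfrak{c}$. Your version merely spells out a few details (homogeneity, the seminorm description of the induced topology, the trivial converse) that the paper leaves implicit.
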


\begin{proof}
Assume that $E_w$ has countable pseudocharacter.
Let $\bigcap_{n\in\NN} U_n = \{0\}$, where the open sets $U_n$ have the following standard form
\[
U_n = \{ x\in E: | \chi_{i,n} (x)| <\delta_n, \mbox{ where } \chi_{i,n} \in E' \mbox{ for } 1\leq i \leq k_n \}.
\]
Let $\{ \chi_n\}_{n\in\NN}$ be an enumeration of the family $\{ \chi_{i,n}: 1\leq i \leq k_n, n\in\NN\}$.  Then  $\bigcap_{n\in\NN} \ker(\chi_n) =\{ 0\}$. This implies that the following map
\[
p: E_w \to \prod_{n\in\NN} E/\ker(\chi_n) =\mathbb{R}^\NN, \quad p(x)=\left( \chi_n (x)\right)_{n\in\NN},
\]
is continuous and injective, and hence $|E|=\mathfrak{c}$ as $E$ is non-trivial. Now the topology induced on $E_w$ from $\mathbb{R}^\NN$ is as desired. The converse assertion is trivial.
\end{proof}
Next fact is well known but hard to locate.
\begin{lemma} \label{propDual}
A lcs $E$ admits a metrizable and separable locally convex topology $\tau$ weaker than $\sigma(E,E')$ if and only if $(E',\sigma(E',E))$ is separable.
\end{lemma}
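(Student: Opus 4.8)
The plan is to prove the two implications separately, reusing the embedding into $\RR^{\NN}$ from the proof of Lemma \ref{l-PsChar} for the easier direction. For the sufficiency, suppose $(E',\sigma(E',E))$ is separable and fix a $\sigma(E',E)$-dense sequence $\{f_n\}_{n\in\NN}$ in $E'$. First I would check that this sequence separates the points of $E$: given $x\neq 0$, Hahn--Banach provides $f\in E'$ with $f(x)\neq 0$, and the weak$^*$ neighbourhood $\{g\in E':|g(x)-f(x)|<|f(x)|\}$ of $f$ must contain some $f_n$, forcing $f_n(x)\neq 0$. Consequently the map $p\colon E\to\RR^{\NN}$, $p(x)=(f_n(x))_{n\in\NN}$, is injective, and the initial topology $\tau$ it induces is a metrizable, separable, locally convex topology, being that of a subspace of $\RR^{\NN}$; since every $f_n$ lies in $E'$, $\tau$ is weaker than $\sigma(E,E')$. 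This is exactly the construction appearing in Lemma \ref{l-PsChar}.

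For the necessity, let $\tau$ be a metrizable, separable, locally convex topology on $E$ with $\tau\subseteq\sigma(E,E')$, and put $F:=(E,\tau)'$. Since $\tau\subseteq\sigma(E,E')$ we have $F\subseteq E'$, and since $\tau$ is a Hausdorff locally convex topology, Hahn--Banach shows that $F$ separates the points of $E$; by the bipolar theorem $F$ is therefore $\sigma(E',E)$-dense in $E'$. Thus it suffices to prove that $F$ is $\sigma(E',E)$-separable. I would fix a decreasing base $U_1\supseteq U_2\supseteq\cdots$ of convex balanced $\tau$-neighbourhoods of $0$ and write $F=\bigcup_{n,m\in\NN} m\,U_n^{\circ}$, where $U_n^{\circ}$ denotes the polar in $E'$; indeed $f\in F$ if and only if $f$ is bounded on some $U_n$. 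Each $U_n^{\circ}$ is $\sigma(E',E)$-compact by the Alaoglu--Bourbaki theorem, so it remains to see that each $U_n^{\circ}$ is $\sigma(E',E)$-separable.

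Here lies the main point. Fix a countable $\tau$-dense set $D\subseteq E$. The set $U_n^{\circ}$ is $\tau$-equicontinuous, so on $U_n^{\circ}$ the weak$^*$ topology $\sigma(E',E)$ coincides with the topology of pointwise convergence on the countable dense set $D$: if a net in $U_n^{\circ}$ converges pointwise on $D$, then equicontinuity together with the density of $D$ force convergence at every point of $E$. Hence the evaluation map $U_n^{\circ}\to\RR^{D}=\RR^{\NN}$ is a homeomorphism onto a subspace of $\RR^{\NN}$, so the compact space $U_n^{\circ}$ is metrizable and therefore separable. Consequently $F$, being a countable union of the separable sets $m\,U_n^{\circ}$, is $\sigma(E',E)$-separable, and being dense in $E'$ it makes $(E',\sigma(E',E))$ separable.

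I expect the equicontinuity reduction in the last paragraph to be the crux: it is the step that converts separability of $(E,\tau)$ into weak$^*$-separability of the compact polars $U_n^{\circ}$, and hence of the dual. The remaining ingredients (Hahn--Banach separation, the bipolar theorem, Alaoglu--Bourbaki compactness, and the stability of separability under countable unions and under passing to a dense subspace) are standard and should require only routine verification.
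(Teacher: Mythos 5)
Your proof is correct. Note that the paper itself offers no argument for Lemma \ref{propDual} --- it is stated with the remark that the fact is ``well known but hard to locate'' --- so there is no official proof to compare against; you have supplied the missing details. Your sufficiency direction is exactly the embedding into $\RR^{\NN}$ already used in Lemma \ref{l-PsChar} (a weak$^*$-dense sequence separates points, hence induces an injective continuous linear map into $\RR^{\NN}$, whose initial topology is the desired $\tau$). Your necessity direction is the standard route: $F:=(E,\tau)'$ sits inside $E'$, separates points, hence is $\sigma(E',E)$-dense by the bipolar theorem; writing $F=\bigcup_{n,m} m\,U_n^{\circ}$ over a countable base of convex balanced $\tau$-neighbourhoods, each polar is weak$^*$-compact by Alaoglu--Bourbaki and equicontinuous, so on it the weak$^*$ topology agrees with pointwise convergence on a countable $\tau$-dense set, making it a metrizable compactum and hence separable. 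All the individual steps (the injectivity of the evaluation map on $U_n^{\circ}$ via density of $D$ and continuity, the passage from a separable dense subspace to separability of $E'$) check out. The argument is complete.
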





Lemmas \ref{l-PsChar} and \ref{propDual} imply the following necessary conditions on lcs $E$ to be
weakly $\aleph$-space which partially converse Proposition \ref{pro}(i).
\begin{proposition} \label{p-Nec}
If $E$ is a non-trivial lcs which is a  weakly $\sigma$-space, then
\begin{enumerate}
\item[{\rm (i)}] $(E, \sigma(E,E'))$ admits a weaker separable metrizable lcs topology;
\item[{\rm (ii)}] $\psi (E, \sigma(E,E')) = \aleph_0$ and $|E|=\mathfrak{c}$;
\item[{\rm (iii)}] $(E',\sigma(E',E))$ is separable.
\end{enumerate}
\end{proposition}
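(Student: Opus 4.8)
The plan is to deduce all three statements in sequence directly from the two preceding lemmas together with the general fact, recorded just after the definition of a $\sigma$-space, that every $\sigma$-space has countable pseudocharacter. There is essentially no combinatorial or topological work left to do inside the proof itself; the substance has been packaged into Lemma \ref{l-PsChar} and Lemma \ref{propDual}, and the task is to chain them correctly.

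First I would note that, by hypothesis, $E$ is a weakly $\sigma$-space, which means precisely that $E_w := (E,\sigma(E,E'))$ is a $\sigma$-space. Hence the remark preceding Example \ref{lp} applies and yields $\psi(E_w) = \aleph_0$, that is, $E_w$ has countable pseudocharacter. This already establishes the first half of (ii).

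Next I would invoke Lemma \ref{l-PsChar}. Since $E$ is non-trivial and $E_w$ has countable pseudocharacter, that lemma gives \emph{two} conclusions at once: $E_w$ admits a weaker separable metrizable locally convex topology, which is exactly assertion (i); and $|E| = \mathfrak{c}$, which completes (ii). Finally, for (iii) I would feed (i) into Lemma \ref{propDual}. The one point worth making explicit is that the topology produced in (i) is of the correct type to serve as the hypothesis of Lemma \ref{propDual}: it is a locally convex, metrizable, separable topology that is weaker than $\sigma(E,E')$ (since $E_w = (E,\sigma(E,E'))$ and the topology supplied by Lemma \ref{l-PsChar} is explicitly an lcs topology). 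With the hypothesis thus verified, Lemma \ref{propDual} concludes that $(E',\sigma(E',E))$ is separable, which is (iii).

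The \textbf{main obstacle} is not really an obstacle but a matching condition: one must check that the weaker separable metrizable topology delivered by Lemma \ref{l-PsChar} is a \emph{locally convex} topology \emph{weaker than $\sigma(E,E')$}, so that it is admissible as the input to Lemma \ref{propDual}; once this compatibility between the two lemmas is observed, the three implications follow immediately and in the stated order.
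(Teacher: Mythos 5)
Your proposal is correct and matches the paper's own (implicit) argument exactly: the paper derives Proposition \ref{p-Nec} by combining the observation that every $\sigma$-space has countable pseudocharacter with Lemma \ref{l-PsChar} (yielding (i) and (ii)) and Lemma \ref{propDual} (yielding (iii)). Your remark on checking that the weaker topology from Lemma \ref{l-PsChar} is a separable metrizable \emph{lcs} topology weaker than $\sigma(E,E')$, as required by Lemma \ref{propDual}, is the right compatibility point and is indeed satisfied.
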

Note that the space $\ell_\infty$ satisfies above  conditions (i)--(iii), although it is not a weakly $\aleph$-space (see Corollary \ref{ell} below).


\section{$\ell_1(\RR)$ is an $\aleph$-space in the weak topology}\label{SecWitek}


In this section we prove Theorem \ref{ThmElloneAlephCont} which states that the Banach space $\ell_1(\Gamma)$ is an $\aleph$-space in the weak topology if and only if the cardinality of $\Gamma$ does not exceed the continuum. In particular, the space $\ell_{1}(\RR)$ is a weakly $\aleph$-space.
Clearly, the ``only if" part of the theorem follows from Proposition \ref{p-Nec} because the space $\ell_1(\Gamma)$ with the weak topology does not have countable pseudocharacter whenever $|\Gamma| > 2^{\aleph_0}$.
The remaining part of this section is devoted to the proof of the ``if" part.  It is clear that if the cardinality of the set $\Gamma_1$ is less than or equal to the cardinality of the set $\Gamma_2$ then $\ell_1(\Gamma_1)$ embeds into $\ell_1(\Gamma_2)$, therefore it is enough to consider the case when $\Gamma$ has the cardinality continuum.

We shall work with the space $\ell_1(\cantor)$, where $\cantor$ denotes the Cantor set, treated just as an index set of cardinality continuum (recall that the space $\ell_1(S)$ does not depend on any extra structure of the set $S$).

We shall use some ideas from \cite{MarPol} (especially from the proof of Lemma 2.3.1 in \cite{MarPol}).
Given a Banach space $E$, we shall denote by $\ball_E$ and $\sphere_E$ the closed unit ball and the unit sphere of $E$, respectively.

\begin{lemma}
The unit sphere $\sphere_{\ell_\infty(\cantor)}$ is \weakstar\ separable.
\end{lemma}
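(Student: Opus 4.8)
The plan is to realize $\ell_\infty(\cantor)$ as the dual $\ell_1(\cantor)'$, so that the \weakstar\ topology is $\sigma(\ell_\infty(\cantor),\ell_1(\cantor))$. On the norm-bounded set $\sphere_{\ell_\infty(\cantor)}$ this topology coincides with the topology of pointwise convergence on $\cantor$: indeed, $c_{00}(\cantor)$ is norm-dense in $\ell_1(\cantor)$, and for a uniformly bounded net pointwise convergence upgrades to convergence against every element of $\ell_1(\cantor)$ by a routine tail estimate. I would then exploit that the index set is the Cantor set, which is perfect, totally disconnected and metrizable, and in particular carries a countable algebra of clopen sets. Accordingly I take as the candidate countable dense set
\[
D=\{\, g\in\sphere_{\ell_\infty(\cantor)} : g \text{ is locally constant with values in } \Q \,\},
\]
i.e. the clopen step functions with rational values and sup-norm exactly $1$; since each such $g$ is determined by a finite clopen partition from the countable clopen algebra together with finitely many rational values, $D$ is countable.

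Next I would prove that $D$ is \weakstar\ dense in $\sphere_{\ell_\infty(\cantor)}$. Fix $f_0\in\sphere_{\ell_\infty(\cantor)}$ and a basic \weakstar\ neighborhood determined by $x_1,\dots,x_n\in\ell_1(\cantor)$ and $\eps>0$. The union $S$ of the (countable) supports of the $x_i$ is countable, and by $\ell_1$-summability there is a finite $F\subset S$ with $\sum_{\gamma\in S\setminus F}|x_i(\gamma)|<\eps/4$ for every $i$. Choose rationals $q_\gamma$ $(\gamma\in F)$ with $|q_\gamma|\le 1$ and $\sum_{\gamma\in F}|q_\gamma-f_0(\gamma)|\,|x_i(\gamma)|<\eps/2$ for every $i$. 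Separate the finitely many points of $F$ by pairwise disjoint clopen sets $C_\gamma\ni\gamma$; since $\cantor$ is perfect there is a further clopen set $C_0$ disjoint from all the $C_\gamma$, and I define $g$ to equal $q_\gamma$ on $C_\gamma$, to equal $1$ on $C_0$, and to equal $0$ elsewhere. Then $g\in D$ (it is a rational clopen step function with $\|g\|_\infty=1$), and for each $i$
\[
|\langle g-f_0,x_i\rangle|\le \sum_{\gamma\in F}|q_\gamma-f_0(\gamma)|\,|x_i(\gamma)|+\sum_{\gamma\in S\setminus F}|g(\gamma)-f_0(\gamma)|\,|x_i(\gamma)|<\tfrac{\eps}{2}+2\cdot\tfrac{\eps}{4}=\eps,
\]
where on the tail I used $\|g\|_\infty,\|f_0\|_\infty\le 1$, hence $|g-f_0|\le 2$. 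Thus $g$ lies in the prescribed neighborhood, so $D$ is dense and $\sphere_{\ell_\infty(\cantor)}$ is \weakstar\ separable.

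The main obstacle, and the reason the Cantor-set structure is genuinely used, is to keep the approximant \emph{on the sphere}. Abstractly one knows (Hewitt--Marczewski--Pondiczery) that the cube $[-1,1]^{\cantor}$, hence the \weakstar\ compact ball $\ball_{\ell_\infty(\cantor)}$, is separable; but separability does not pass to the subspace $\sphere_{\ell_\infty(\cantor)}$ for free. The device that repairs this is the ``spike'' of height $1$ placed on an unused clopen set $C_0$: it forces $\|g\|_\infty=1$ without disturbing the finitely many coordinates that matter, and the perfectness of $\cantor$ guarantees that such a $C_0$ always exists. The remaining care is purely bookkeeping: choosing $F$ and the rationals $q_\gamma$ so that both the $F$-part and the $\ell_1$-tail stay below $\eps$, while the total disconnectedness provides the clopen sets that render the whole family $D$ countable.
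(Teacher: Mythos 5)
Your proposal is correct and follows essentially the same route as the paper: a countable set of rational-valued clopen step functions, a finite set $F$ capturing most of the $\ell_1$-mass of the test functionals, rational approximation of $f_0$ on $F$, and a ``spike'' of value $1$ on a clopen set missing $F$ to keep the approximant on the sphere (the paper's $q_U=1$ for the cell $U$ with $U\cap F=\emptyset$). The only cosmetic differences are that you phrase the weak* pairing via $\ell_1$-duality explicitly and set $g=0$ off the chosen clopen sets rather than working with a full partition; the estimates and the key idea are identical.
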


\begin{proof}
Let $\PP$ be the family of all (necessarily finite) partitions of the Cantor set into finitely many open sets.
As $\cantor$ is zero-dimensional, for every finite set $F \subset \cantor$ there is $P \in \PP$ such that $P = \{U_x \colon  x\in F\}$ and $x \in U_x$ for every $x \in F$.
Obviously, the family $\PP$ is countable, because the Cantor set has only countably many sets that are open and closed simultaneously.
Define
\[
D = \left\{ \sum_{U \in P} q_U \chi_U \in S_{\ell_\infty(\cantor)} \colon P \in \PP, \; \{ q_U \colon U \in P\} \subset \Q \right\},
\]
where $\chi_A$ denotes the characteristic function of a set $A$. Obviously, $D$ is countable. We claim that it is \weakstar\ dense in $\sphere_{\ell_\infty(\cantor)}$.

In fact, given $x_1, \dots, x_k \in \ell_1(\cantor)$ and $\eps > 0$, a basic \weakstar\ neighborhood of $y \in S_{\ell_\infty(\cantor)}$ is of the form
\[
V = \{ v \in S_{\ell_\infty(\cantor)} \colon |v(x_i) - y(x_i)| < \eps \text{ for }i=1,2,\dots,k \}.
\]
Fix $\delta > 0$ and let $F \subset \cantor$ be a finite set such that
\begin{equation} \label{equation-1}
\|x_i\| - \sum_{t \in F} |x_i(t)| = \sum_{t \not\in F} |x_i(t)| <\delta
\end{equation}
for every $i =1,2,\dots, k$.
Take a partition $P \in \PP$ such that $U \cap F$ is either empty or a singleton, whenever $U \in \PP$, and there is $U\in P$ such that $U\cap F=\emptyset$. For every $t\in F$ and each $U\in P$ containing $t\in U$ take $q_U\in [-1,1]\cap \Q$ such that $|q_U - y(t)| < \delta$, and set $q_U =1$ for every $U\in P$ such that $U\cap F=\emptyset$. Set $w = \sum_{U \in P} q_U \chi_U$. Then $w\in D$. We show that $w\in y+V$ for $\delta$ small enough. Indeed, for every $i =1,2,\dots, k$, the inequality (\ref{equation-1}) and the construction of $w$ imply
\[
\begin{split}
|w(x_i) -y(x_i)| & \leq \sum_{t\in F} |w(t) x_i(t) - y(t) x_i(t)| +\sum_{t\not\in F} |w(t) x_i(t) - y(t) x_i(t)| \\
 & < \delta \cdot \sum_{t\in F} |x_i(t)| + \sum_{t\not\in F} 2 | x_i(t)| < \delta(\| x_i\| +2).
\end{split}
\]
Now it is clear that if $\delta$ is small enough then $w\in y+V$. Thus $\sphere_{\ell_\infty(\cantor)}$ is \weakstar\ separable.
\end{proof}

\begin{lemma}\label{Lmerbgui3}
Let $E$ be a Banach space such that $(\sphere_{E'}, \weakstar)$ is separable.
Then for every $r>0$ there exists a countable family $\FF$ of weakly closed subsets of $E$ contained in $$E \setminus r \ball_E = \{ x \in E \colon \|x\| > r\}$$
and such that $$E \setminus r \ball_E = \bigcup_{F \in \FF} \Int_w(F),$$ where $\Int_w$ denotes the interior with respect to the weak topology.
\end{lemma}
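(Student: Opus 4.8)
The plan is to recover the norm from a countable family of functionals and then carve up the weakly open set $E \setminus r\ball_E$ using level sets of those functionals. Since $(\sphere_{E'}, \weakstar)$ is separable, I would first fix a sequence $\{\phi_n\}_{n\in\NN}$ that is \weakstar-dense in $\sphere_{E'}$. The observation on which everything rests is the norm-recovery identity
\[
\|x\| = \sup_{n\in\NN} |\phi_n(x)| \qquad (x \in E).
\]
To see this, note that for each fixed $x$ the evaluation map $\phi \mapsto |\phi(x)|$ is \weakstar-continuous on $E'$; hence its supremum over the dense set $\{\phi_n\}$ agrees with its supremum over all of $\sphere_{E'}$, and the latter equals $\|x\|$ by Hahn--Banach. (The inequality $|\phi_n(x)| \le \|x\|$ is trivial, and \weakstar-density upgrades the resulting inequality to an equality.)

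Granting this identity, I would set, for $n,k \in \NN$,
\[
F_{n,k} = \{ x \in E \colon |\phi_n(x)| \ge r + 1/k \},
\]
and define $\FF = \{ F_{n,k} \colon n,k \in \NN\}$, which is countable. Each $F_{n,k}$ is weakly closed, being the preimage of the closed half-line $[\,r+1/k,\infty)$ under the weakly continuous map $x \mapsto |\phi_n(x)|$ (the members of $E'$ are exactly the weakly continuous linear functionals). Moreover, by the displayed identity every point of $F_{n,k}$ has norm at least $r+1/k > r$, so $F_{n,k} \subset E \setminus r\ball_E$, as the statement demands.

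It then remains to verify that the weak interiors cover $E \setminus r\ball_E$. One inclusion is immediate, since $\Int_w(F_{n,k}) \subset F_{n,k} \subset E \setminus r\ball_E$. For the reverse inclusion, take $x$ with $\|x\| > r$; by the identity there is $n$ with $|\phi_n(x)| > r$, and then a $k$ with $|\phi_n(x)| > r+1/k$. The strict superlevel set $\{ y \in E \colon |\phi_n(y)| > r+1/k\}$ is weakly open, is contained in $F_{n,k}$, and contains $x$, so $x \in \Int_w(F_{n,k})$. This gives the desired equality $E \setminus r\ball_E = \bigcup_{F\in\FF} \Int_w(F)$.

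The only genuinely substantial step is the norm-recovery identity $\|x\| = \sup_n |\phi_n(x)|$: this is precisely where the hypothesis of \weakstar-separability of $\sphere_{E'}$ is used, through the \weakstar-continuity of evaluation at a fixed vector and the principle that the supremum of a continuous function over a dense set equals its supremum over the whole space. Everything after that is routine, reducing to the facts that level sets of weakly continuous functionals are weakly closed and their strict superlevel sets are weakly open, so I do not anticipate any topological or set-theoretic obstacle beyond this identity.
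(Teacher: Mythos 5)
Your proposal is correct and follows essentially the same route as the paper: both arguments take a countable \weakstar-dense subset of $\sphere_{E'}$ and use the superlevel sets $\{x : \phi(x) \geq r+1/k\}$ (the paper without the absolute value, which makes no difference) as the family $\FF$. The paper states the verification in one line, whereas you spell out the norm-recovery identity $\|x\|=\sup_n|\phi_n(x)|$ and the covering argument explicitly; the content is the same.
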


\begin{proof}
Let $D$ be a countable \weakstar\ dense subset of $\sphere_{E'}$.
Given $\phi \in D$, $n \in \NN$, define
\[
F_{\phi,n} = \{ x \in E \colon \phi(x) \geq r + 1/n \}.
\]
Then $\FF = \{F_{\phi,n} \colon \phi \in D, \; n \in \NN\}$ is the required family.
\end{proof}

\begin{lemma}\label{Lmsgribwri}
Let $0 < \eps < r$ and let
\[
M(r,\eps) = \{ x \in \ell_1(\cantor) \colon r - \eps < \|x\| \leq r \}.
\] 
Then for every $x \in M(r,\eps)$ there exists a weakly (in fact, pointwise) open set $V \subset \ell_1(\cantor)$ such that $x \in V$ and $\diam(V \cap M(r,\eps)) \leq 4 \eps$.
\end{lemma}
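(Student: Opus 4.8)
The plan is to exploit the fact that the topology of pointwise convergence on $\cantor$ is weaker than the weak topology on $\ell_1(\cantor)$: each coordinate evaluation $x \mapsto x(t)$ is given by the functional $\chi_{\{t\}} \in \ell_\infty(\cantor) = \ell_1(\cantor)'$, so every pointwise open set is weakly open. Hence it suffices to produce a \emph{pointwise} open $V$ with the stated diameter bound. The governing principle is that an element of $\ell_1(\cantor)$ concentrates almost all of its norm on a finite set of coordinates; a pointwise neighborhood pins down those coordinates, and the norm ceiling $\|\cdot\| \le r$ then forces the remaining, uncontrolled, tail mass to be small.

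Concretely, I would first fix a small $\delta > 0$ (to be calibrated at the end) and, using $\sum_{t} |x(t)| = \|x\| < \infty$, choose a finite set $F \subset \cantor$ with $\sum_{t \notin F} |x(t)| < \delta$. I then set
\[
V = \{ y \in \ell_1(\cantor) \colon |y(t) - x(t)| < \delta' \text{ for all } t \in F \},
\]
a pointwise (hence weakly) open neighborhood of $x$, with $\delta' > 0$ a second parameter to be fixed later.

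The key step is to bound the tail mass of an arbitrary $y \in V \cap M(r,\eps)$. Since $x \in M(r,\eps)$ gives $\|x\| > r - \eps$, and $y$ agrees with $x$ on $F$ up to $\delta'$, the portion of the norm of $y$ carried by $F$ satisfies
\[
\sum_{t \in F} |y(t)| \ge \sum_{t\in F} |x(t)| - |F|\delta' > (r - \eps) - \delta - |F|\delta'.
\]
Combined with $\|y\| \le r$ this yields $\sum_{t\notin F} |y(t)| < \eps + \delta + |F|\delta'$; that is, the uncontrolled mass of \emph{every} point of $V \cap M(r,\eps)$ is small. For two points $y, z \in V \cap M(r,\eps)$, splitting $\|y - z\|$ over $F$ and its complement gives $\sum_{t \in F} |y(t) - z(t)| < 2|F|\delta'$ and $\sum_{t\notin F} |y(t)-z(t)| \le \sum_{t\notin F}|y(t)| + \sum_{t\notin F}|z(t)| < 2(\eps + \delta + |F|\delta')$, whence $\|y - z\| < 2\eps + 2\delta + 4|F|\delta'$. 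Finally I would calibrate: pick $\delta < \eps/2$ first, which determines $F$, and then $\delta'$ with $|F|\delta' < \eps/4$, so that $\|y-z\| < 4\eps$ for all such $y,z$, giving $\diam(V \cap M(r,\eps)) \le 4\eps$.

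The main obstacle --- which is in fact exactly the mechanism that makes the statement true --- is that a pointwise neighborhood controls only finitely many coordinates and gives no direct handle on the infinitely many tail coordinates. The entire argument hinges on converting the norm upper bound $\|y\| \le r$ into the tail estimate above, using that $x$ itself already has almost all of its mass on the finite set $F$.
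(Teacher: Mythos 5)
Your proof is correct and follows essentially the same route as the paper's: choose a finite set $F$ carrying all but a small amount of the norm of $x$, take a pointwise neighborhood controlling the coordinates in $F$, and use the ceiling $\|y\|\leq r$ to force the tail mass off $F$ to be small. The only cosmetic difference is that the paper picks the open set $U\subset\ell_1(F)$ directly by the two norm conditions $\|u\|>r-\eps$ and $\|u-p_F(x)\|<\eps$, which yields the constant $4\eps$ without your $\delta,\delta'$ calibration.
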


\begin{proof}
Given $A \subset \cantor$, denote by $p_A$ the canonical projection from $\ell_1(\cantor)$ onto $\ell_1(A)$, that is, $p_A(v) = v \restriction A$.
Fix $x \in M(r,\eps)$.
There exists a finite set $F \subset \cantor$ such that $\| p_F(x) \| > r - \eps$.
Choose an open set $U \subset \ell_1(F)$ such that $\|u\| > r-\eps$ and $\|u - p_F(x)\| < \eps$ for every $u \in U$.
Let $V = p_F^{-1}(U)$.
We claim that $V$ is as required.

Obviously, $x \in V$ and $V$ is pointwise (in particular, weakly) open. Fix $y_1, y_2 \in V \cap M(r,\eps)$.
Let $A = \cantor \setminus F$.

Note that the $\ell_1$-norm has the property that
\[
\| v \| = \| p_F(v) \| + \|p_A(v) \|
\]
for every $v \in \ell_1(\cantor).$ In particular, $\| p_A(y_i) \| \leq \eps$, because $\|p_F(y_i)\| > r-\eps$ and $\|y_i\| \leq r$ for $i=1,2$.
Using these facts we get
\begin{align*}
\|y_1 - y_2\| &= \| p_F(y_1) - p_F(y_2) \| + \| p_A(y_1) - p_A(y_2) \| \\
&\leq \| p_F(y_1) - p_F(x) \| + \| p_F(x) - p_F(y_2) \| + \|p_A(y_1)\| + \|p_A(y_2)\| \\
& \leq 4 \eps.
\end{align*}
It follows that $\diam(V \cap M(r,\eps)) \leq 4\eps$.
\end{proof}

The next statement is rather standard; it has been used implicitly, e.g., in \cite{MarPol}.

\begin{lemma}\label{Lmebrwefeox}
Let $X$ be a metric space. Then there exists an open base $\BB$ in $X$ such that $\BB = \bigcup_{n \in \NN}\BB_n$ and each $\BB_n$ is uniformly discrete, that is, for every $n$ there is $\eps_n>0$ such that the distance of any two distinct members of $\BB_n$ is $>\eps_n$.
\end{lemma}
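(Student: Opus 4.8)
The plan is to build the base scale by scale, reducing the statement to a metric (Stone-type) theorem on refinements of open covers. Fix $m\in\NN$ and consider the open cover $\mathcal{G}_m=\{B(z,2^{-m}):z\in X\}$ of $X$ by balls of radius $2^{-m}$, where I write $B(x,r)$ for the open metric ball. It suffices to produce, for each $m$, a $\sigma$-uniformly-discrete open refinement $\mathcal{V}_m=\bigcup_{n\in\NN}\mathcal{V}_{m,n}$ of $\mathcal{G}_m$ that still covers $X$. Indeed, every member of such a refinement is contained in some $B(z,2^{-m})$ and hence has diameter at most $2^{-m+1}$; so given an open $W\ni p$ and $\delta>0$ with $B(p,\delta)\subseteq W$, choosing $m$ with $2^{-m+1}<\delta$ and then $V\in\mathcal{V}_m$ with $p\in V$ yields $p\in V\subseteq B(p,\delta)\subseteq W$. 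Thus $\BB:=\bigcup_{m,n}\mathcal{V}_{m,n}$, reindexed over $\NN$, is a base that is a countable union of uniformly discrete families.

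To construct the refinement I would follow the M.\,E.~Rudin style proof of Stone's theorem, keeping track of the metric separation. Writing $\{U_a\}_{a\in X}=\mathcal{G}_m$, well-order the index set (here $X$ itself, by $\prec$) and define, by recursion on $n\in\NN$,
\[
D_{n,a}=\bigcup\Big\{B(x,2^{-n}) : a=\min\{b:x\in U_b\},\ x\notin\textstyle\bigcup_{i<n}\bigcup_{b}D_{i,b},\ B(x,3\cdot2^{-n})\subseteq U_a\Big\}.
\]
Then there are three things to verify (discarding empty sets). First, $D_{n,a}\subseteq U_a$, so $\{D_{n,a}\}$ refines $\mathcal{G}_m$ and each member has diameter at most $2^{-m+1}$. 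Second, for fixed $n$ the family $\mathcal{V}_{m,n}:=\{D_{n,a}\}_a$ is uniformly discrete: if $x,x'$ are centres of $D_{n,a},D_{n,a'}$ with, say, $a\prec a'$, then $x'\notin U_a$ while $B(x,3\cdot2^{-n})\subseteq U_a$, forcing $d(x,x')\ge 3\cdot2^{-n}$, whence any $y\in B(x,2^{-n})$ and $y'\in B(x',2^{-n})$ satisfy $d(y,y')>2^{-n}$; so distinct members of $\mathcal{V}_{m,n}$ are pairwise at distance $\ge 2^{-n}$, and $\eps_{n}:=2^{-n-1}$ works. Third, $\{D_{n,a}\}_{n,a}$ covers $X$: for $p\in X$ take the least $a$ with $p\in U_a$ and the least $n$ with $B(p,3\cdot2^{-n})\subseteq U_a$; if $p$ is not already covered at some level $i<n$, then $p$ itself is an admissible centre, so $p\in B(p,2^{-n})\subseteq D_{n,a}$.

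The crux is the second point, where covering and discreteness pull in opposite directions. The ``least index'' clause together with the over-sized containment $B(x,3\cdot2^{-n})\subseteq U_a$ is exactly what buys the uniform gap $3\cdot2^{-n}$ between centres belonging to different indices, while the recursive exclusion clause $x\notin\bigcup_{i<n}\bigcup_b D_{i,b}$ is what keeps covering intact across levels. I would stress that the smallness of $\diam(D_{n,a})$ comes not from the radius $2^{-n}$ of the constituent balls (there may be many centres) but from containment in the small ball $U_a$; the radius $2^{-n}$ serves only the separation and covering bookkeeping. All remaining verifications are routine, and the well-ordering of $X$ (the sole appeal to choice) is harmless.
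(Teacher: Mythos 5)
Your proof is correct and follows essentially the same route as the paper: both reduce the statement to the strengthened form of Stone's theorem (every open cover of a metric space has an open refinement that is a countable union of \emph{uniformly} discrete families), applied to the covers by balls of radius $2^{-m}$ (resp.\ $1/n$), and then observe that the union over all scales is a base. The only difference is that the paper outsources the refinement construction to the proof of Stone's theorem in Engelking, whereas you write out the M.~E.~Rudin-style recursion and correctly track the $3\cdot 2^{-n}$ versus $2^{-n}$ separation that yields uniform discreteness.
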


\begin{proof}
A theorem of Stone says that every open cover of a metric space $X$ admits a $\sigma$-discrete open refinement.
The proof (see, e.g., \cite[Proof of Thm. 4.4.1]{Eng}) actually shows that every open cover of $X$ has an open refinement of the form $\UU = \bigcup_{n\in\NN}\UU_n$, where each $\UU_n$ is uniformly discrete.
Now let $\BB = \bigcup_{n \in \NN}\WW_n$ be such that $\WW_n$ is an open refinement of a cover by balls of radius $1/n$ and $\WW_n$ is a countable union of uniformly discrete families.
Then $\BB$ is easily seen to be an open base.
\end{proof}

\begin{remark} \label{rem:Schur}{\em
The proof of Theorem \ref{ThmElloneAlephCont} uses the well known fact stating that the space $\ell_1(\Gamma)$ has the Schur property (that is any convergent sequence in the weak topology is also a  convergent sequence in the norm topology)  for every set $\Gamma$, see  \cite{fabian}. This implies that any weakly compact set of $\ell_1(\Gamma)$ is also norm compact.}
\end{remark}

\begin{proof}[Proof of Theorem \ref{ThmElloneAlephCont}]
Let $\BB = \bigcup_{n \in \NN}\BB_n$ be a base of open sets for the norm topology on $\ell_1(\cantor)$ such that the distance between every two distinct members of $\BB_n$ is $>1/k_n$ for every $n \in \NN$ (here we have used Lemma~\ref{Lmebrwefeox}).

Given $r>0$, define
\[
U_r = \ell_1(\cantor) \setminus r \ball_{\ell_1(\cantor)}.
\]
Let $\FF_r$ be a countable family of weakly closed subsets of $U_r$ such that $\bigcup_{F \in \FF_r} \Int_w F = U_r$ (Lemma~\ref{Lmerbgui3}). Let $\FF_r = \{F^m_r\}_{m \in \NN}$.

Given $n,m,i \in \NN $, define
\[
L(i,n):= M\left( \frac{i+2}{10k_n}, \frac{1}{5k_n}\right) = \left\{ x \in \ell_1(\cantor) \colon \frac{i}{10k_n} < \|x\| \leq \frac{i+2}{10k_n} \right\}
\]
and
\[
\CC(n,m,i) = \{ B \cap F^m_{i/10k_n} \cap L(i,n) \colon B \in \BB_n \}.
\]

\begin{claim}
For every $n,m,i \in \NN$, the family $\CC(n,m,i)$ is discrete in the weak topology.
\end{claim}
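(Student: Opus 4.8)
The plan is to establish discreteness pointwise: given an arbitrary point $x \in \ell_1(\cantor)$, I will exhibit a weakly open neighbourhood of $x$ that meets at most one member of $\CC(n,m,i)$. To keep track of the constants, set $a = \tfrac{i}{10k_n}$, $b=\tfrac{i+2}{10k_n}$ and $\eps = \tfrac{1}{5k_n}$, so that $L(i,n) = M(b,\eps)$ with $b-\eps = a$. I will use three structural facts about a typical member $C_B := B \cap F^m_{i/10k_n}\cap L(i,n)$: first, $C_B \subseteq L(i,n) \subseteq b\,\ball_{\ell_1(\cantor)}$; second, $C_B \subseteq F^m_{i/10k_n}$, which by Lemma~\ref{Lmerbgui3} is weakly closed and contained in $U_a = \{x : \|x\|>a\}$; and third, distinct members of $\BB_n$ are at norm distance greater than $\tfrac{1}{k_n}$, while $4\eps = \tfrac{4}{5k_n} < \tfrac{1}{k_n}$ --- this last numerical inequality is exactly what the particular choice of radii in $L(i,n)$ secures.

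The argument then splits according to the position of $x$. If $\|x\| > b$, then since the closed ball $b\,\ball_{\ell_1(\cantor)}$ is convex and norm-closed, hence weakly closed, its complement is a weakly open neighbourhood of $x$ disjoint from $L(i,n)$, and so from every member. If instead $\|x\|\le b$ but $x \notin F^m_{i/10k_n}$, then the complement of the weakly closed set $F^m_{i/10k_n}$ is a weakly open neighbourhood of $x$ disjoint from every member. In both of these cases the neighbourhood meets no member at all.

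The remaining case is $\|x\| \le b$ and $x \in F^m_{i/10k_n}$. Here $F^m_{i/10k_n}\subseteq U_a$ forces $\|x\|>a$, so $a<\|x\|\le b$, i.e. $x\in L(i,n)=M(b,\eps)$. Now I apply Lemma~\ref{Lmsgribwri} (whose hypothesis $0<\eps<b$ holds since $i\ge 1$) to obtain a weakly open $V$ with $x\in V$ and $\diam\bigl(V\cap L(i,n)\bigr)\le 4\eps$. I claim $V$ meets at most one member. Indeed, if $y\in V\cap C_B$ and $z\in V\cap C_{B'}$ with $B\ne B'$ in $\BB_n$, then $y,z\in V\cap L(i,n)$ gives $\|y-z\|\le 4\eps = \tfrac{4}{5k_n}$, whereas $y\in B$, $z\in B'$ and uniform discreteness give $\|y-z\|>\tfrac{1}{k_n}$, a contradiction. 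Thus $V$ meets $C_B$ for at most one $B\in\BB_n$, completing this case and the proof.

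I expect the main obstacle to be the treatment of points lying outside $L(i,n)$: the set $L(i,n)$ is a shell between two spheres and is neither weakly open nor weakly closed, so one cannot simply take a weak neighbourhood avoiding it. The resolution --- and the reason the members are intersected with $F^m_{i/10k_n}$ and carried inside $b\,\ball_{\ell_1(\cantor)}$ rather than with $L(i,n)$ alone --- is that these two \emph{weakly closed} walls trap every member: a point failing to lie in $L(i,n)$ either overshoots the outer ball (separated by the weakly open complement of $b\,\ball_{\ell_1(\cantor)}$) or, having norm at most $b$, falls below the inner radius and hence out of $F^m_{i/10k_n}$ (separated by its weakly open complement). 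The genuinely quantitative input --- Lemma~\ref{Lmsgribwri} together with the slack $4\eps < 1/k_n$ --- is then needed only inside the shell, where it converts the norm separation of the balls in $\BB_n$ into weak discreteness.
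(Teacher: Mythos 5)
Your proof is correct and follows essentially the same route as the paper's: both trap the union of $\CC(n,m,i)$ inside the weakly closed set $F^m_{i/10k_n}\cap\frac{i+2}{10k_n}\ball_{\ell_1(\cantor)}$ (you merely split its complement into the two explicit cases $\|x\|>b$ and $x\notin F^m_{i/10k_n}$), and then, for points of that set, invoke Lemma~\ref{Lmsgribwri} together with the slack $4/5k_n<1/k_n$ against the uniform $1/k_n$-discreteness of $\BB_n$. No gaps; your explicit verification that $0<\eps<r$ holds because $i\ge 1$ is a small point the paper leaves implicit.
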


\begin{proof}
Note that the union of $\CC(n,m,i)$ is contained in the weakly closed set 
\[
F^m_{i/10k_n} \cap ((i+2)/10k_n)\ball_{\ell_1(\cantor)},
\]
therefore it is enough to show that every point of this set has a weak neighborhood meeting at most one set from $\CC(n,m,i)$.

Fix $x \in F^m_{i/10k_n} \cap ((i+2)/10k_n)\ball_{\ell_1(\cantor)} \subset L(i,n)$. By Lemma~\ref{Lmsgribwri}, there exists a weakly open set $V$ such that $x \in V$ and
\[
\diam\big(V \cap L(i,n) \big) \leq 4/5k_n < 1/k_n.
\]
The set $V$ can intersect at most one $B \in \BB_n$, as $\BB_n$ is $1/k_n$-discrete.
\end{proof}

Let $O(r) = \{ x \in \ell_1(\cantor) \colon \|x\| < r\}$ and define
\[
\DD(n) = \{ B \cap  O(1/5k_n) \colon B \in \BB_n \}.
\]
Note that actually $\DD(n)$ contains at most one nonempty set (because $2/5k_n < 1/k_n$), therefore it is certainly discrete in the weak topology.

Define
\[
\AAA = \bigcup \{ \CC(n,m,i) \colon n,m,i \in \NN \} \cup \{ \DD(n) \colon n \in \NN \}.
\]

Then the family $\AAA$ is $\sigma$-discrete with respect to the weak topology. It remains to show that $\AAA$ is a $k$-network in $\ell_1(\cantor)$ with the weak topology.

Fix a weakly compact set $K$ contained in a weakly open set $U \subset \ell_1(\cantor)$. It follows from Remark \ref{rem:Schur} that $K$ is also compact in the norm topology. Choose a finite $\EE \subset \BB$ such that $K \subset \bigcup \EE \subset U$. Choose $n_0$ such that $\EE \subset \bigcup_{n=1}^{n_0}\BB_{n}$.

For $i,n\in\NN$, let $N(i,n) := \Int\big( L(i,n)\big)$. Note that, for each $n\in\NN$, the space $\ell_1(\cantor)$  is covered by $O(1/5k_n)$ and the sets $N(i,n)$, $i\in \NN$. Given $B \in \BB_n$, by Lemma~\ref{Lmerbgui3} we have that
\[
B = \bigcup \left\{ B \cap \Int_w(F^m_{i/10k_n}) \cap N(i,n) \colon m,i \in \NN \right\} \cup(B\cap O(1/5k_n)) .
\]
Therefore the family
\begin{eqnarray*}
\bigcup_{n\leq n_0} \left( \left\{ B \cap \Int_w(F^m_{i/10k_n}) \cap N(i,n) \colon B \in \BB_n \cap \EE, \; m,i \in \NN \right\} \right.\\ \cup \left. \{B\cap O(1/5k_n)\colon B \in \BB_n \cap \EE\}\right)
\end{eqnarray*}
covers $K$ and consists of norm open sets, so it has a finite subfamily covering $K$. 
Hence, for any $n\leq n_0$, we can find $i_0(n)$ and $m_0(n)$ such that the finite subfamily
\begin{eqnarray*}
\FF := \bigcup_{n\leq n_0} \left(\left\{ B \cap F^m_{i/10k_n}  \cap L(i,n) \colon B \in \BB_n \cap \EE, \; m\leq m_0(n), \;  i \leq i_0(n)\right\}\right. \\ \cup  \{B\cap O(1/5k_n)\colon B \in \BB_n \cap \EE\}\big)
\end{eqnarray*}
of $\AAA$ satisfies $K\subset \bigcup\FF \subset \bigcup\EE \subset U$. Thus the $\sigma$-locally finite family $\AAA$ is also a $k$-network for $\ell_1(\cantor)$ in the weak topology.
\end{proof}

\begin{proposition} \label{exa:foged}
The space $\ell_{1}(\mathbb{R})$ endowed with the weak topology is not normal.
\end{proposition}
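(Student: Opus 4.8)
The plan is to contradict normality by working with the canonical weakly closed discrete set of size $\mathfrak{c}$ and showing that a suitable two-piece partition of it cannot be weakly separated. Throughout I would work in $\ell_1(\cantor)$, as in the proof of Theorem~\ref{ThmElloneAlephCont}. First I would record that $S=\{e_t:t\in\cantor\}$ is weakly closed and discrete: the coordinate functional $\chi_{\{t\}}\in\ell_\infty(\cantor)=\ell_1(\cantor)'$ gives the weak neighbourhood $\{x:|x(t)-1|<1/2\}$ of $e_t$ meeting $S$ only in $e_t$, so $S$ is discrete; and any $x\notin S$ is separated from $S$ by one or two coordinate functionals (according as $x$ has one coordinate $\ne 1$ or at least two nonzero coordinates), while $x=0$ is separated by the constant function $\mathbf 1=\chi_{\cantor}\in\ell_\infty(\cantor)$, so $S$ is weakly closed. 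This already explains why a one-line argument is unavailable: every weakly dense subset $C$ must meet each $\{x:|x(t)-1|<1/2\}$, and a fixed $c\in\ell_1(\cantor)$ does so for only finitely many $t$; hence the weak density of $\ell_1(\cantor)$ equals $\mathfrak{c}$ and the space is \emph{not} weakly separable. In particular Jones' lemma applied to the whole space yields only the vacuous $2^{\mathfrak{c}}\le 2^{\mathfrak{c}}$, so the nonseparability must be circumvented.

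Accordingly, assume toward a contradiction that $\ell_1(\cantor)$ with its weak topology is normal, and fix a subset $A\subseteq\cantor$ (to be chosen). Then $S_A=\{e_t:t\in A\}$ and $S_{A^c}=\{e_t:t\notin A\}$ are disjoint weakly closed sets, so normality provides disjoint weakly open $U\supseteq S_A$ and $V\supseteq S_{A^c}$. For each $t\in A$ I would choose a basic weak neighbourhood $W_t\subseteq U$ of $e_t$; since each $e_t$ lies on the unit sphere, Lemma~\ref{Lmsgribwri} lets me take $W_t$ of arbitrarily small diameter on that sphere, and using the zero-dimensionality of $\cantor$ together with the \weakstar-separability of $\sphere_{\ell_\infty(\cantor)}$ established above, I would arrange the finitely many functionals defining $W_t$, up to a harmless perturbation, to come from the fixed countable \weakstar-dense set $D$ of step functions $\sum_U q_U\chi_U$. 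This attaches to every $t\in A$ a \emph{countable trace} $\tau_A(t)$ (a finite set of pairs consisting of a step function and a rational radius), and the disjointness $W_t\cap S_{A^c}=\emptyset$ translates into the purely combinatorial statement that for each $t\in A$ some member of $\tau_A(t)$ separates $t$ from all of $A^c$ by a fixed gap; symmetrically for $A^c$ and $V$.

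The hard part, and the crux of the whole proposition, is the final combinatorial step: to choose $A\subseteq\cantor$ for which no such system of countable traces can exist, that is, to produce a \emph{Luzin-type gap} in $\cantor$ that defeats every attempt to separate $A$ from $A^c$ by countably many clopen, gap-bounded pieces of data. Since the available separating data is governed by the \emph{countable} family $D$ while $A$ ranges over $2^{\mathfrak{c}}$ subsets, a diagonalization against the countably many possible trace-patterns should yield an $A$ whose two halves are inseparable, contradicting the disjointness of $U$ and $V$. This is where the rigidity of the weak topology must be used in full: weak neighbourhoods are cylindrical, hence ``fat'', so $U$ and $V$ must be disjoint not merely on $S$ but on all of $\ell_1(\cantor)$, and controlling that \emph{global} disjointness through the countable trace data, rather than merely through the values at the points $e_t$, is the main obstacle. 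I expect the argument to run in ZFC, consistent with the unconditional statement of the proposition, and to parallel the classical Niemytzki-plane mechanism, with the local metrizability near each sphere (Lemma~\ref{Lmsgribwri}) playing the role of the tangent discs and the \weakstar-separable dual sphere capping the resolving power of the open sets.
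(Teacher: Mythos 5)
Your proposal does not reach a proof: the step you yourself identify as ``the hard part, and the crux of the whole proposition'' --- producing a set $A\subseteq\cantor$ defeating every system of countable traces --- is left as a conjecture, and the mechanism you sketch for it is not available as stated. The possible trace assignments are not ``countably many'': each individual trace $\tau_A(t)$ is drawn from a countable set, but the map $t\mapsto\tau_A(t)$ ranges over $2^{\mathfrak c}$ possibilities, exactly as many as there are candidate sets $A$, so a naive diagonalization has nothing to bite on. (This is the same counting obstruction you already ran into with Jones' lemma, and Luzin-gap constructions of the kind you invoke typically require extra set-theoretic hypotheses, whereas the proposition is stated unconditionally.) Your preliminary observations are sound --- $S=\{e_t:t\in\cantor\}$ is indeed weakly closed, discrete and of size $\mathfrak c$, and the weak density of $\ell_1(\cantor)$ is $\mathfrak c$ --- but they only set the stage.

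The paper's proof sidesteps the separation combinatorics entirely. It observes that $\ell_1(\mathbb R)^2$ and $\ell_1(\mathbb R)$ are homeomorphic in their weak topologies, so normality of the space would give normality of its square; Corson's lemma \cite[Lemma 7]{corson} then forces every weakly discrete subset to be countable, which your set $S$ contradicts. In other words, the uncountable discrete set you constructed is the right witness, but the tool that converts it into a contradiction is normality of the \emph{square} via Corson, not a direct two-piece separation argument in the space itself. If you want to salvage your approach, you would need to actually exhibit the inseparable pair $(A,A^c)$, and it is far from clear this can be done in ZFC without passing through something equivalent to Corson's lemma.
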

\begin{proof}
Suppose for a contradiction that $\ell_{1}(\mathbb{R})$ with the weak topology is a normal space. Then the square $\ell_{1}^{2}(\mathbb{R})$ of $\ell_{1}(\mathbb{R})$ with the weak topology $\omega$  is also normal (note that  $\ell_{1}^{2}(\mathbb{R})$ and $\ell_{1}(\mathbb{R})$ endowed with the weak topologies are homeomorphic). Now  Corson's lemma  \cite[Lemma 7]{corson} applies to derive that every $\omega$-discrete set in  $\ell_{1}(\mathbb{R})$ is countable, which clearly leads to a contradiction.
\end{proof}

\begin{remark} {\em
Recall  that Foged in \cite{foged} constructed already a non-normal space which is an $\aleph$-space.  Our example of such a space seems to be however very natural and uses a well known Banach space $\ell_{1}(\mathbb{R})$.  Also  O'Meara \cite{OMe1} gave an example (unpublished) of an $\aleph$-space which is not paracompact.
The authors thank to Professor  Gary Gruenhage for providing references included in the above remark.}
\end{remark}

Notice also that the proof of Theorem \ref{ThmElloneAlephCont} essentially uses the fact that $\ell_1(\mathbb{R})$ has the Schur property (see Remark \ref{rem:Schur}). Therefore it is natural to ask: 
\begin{question}
Let $E$ be a Banach space with the Schur property and satisfy (i)-(iii) of Proposition \ref{p-Nec}. Is $E$ an $\aleph$-space in the weak topology?
\end{question}
Taking into account Remark  \ref{rem:Schur}, every separable Banach space with the Schur property in the weak topology is an $\aleph_{0}$-space.


\section{Interplay between weakly $\aleph$ and weakly $\aleph_0$-Fr\'{e}chet spaces}


Recall that a lcs $E$ is called {\it trans-separable} if for each neighborhood of zero $U$ in $E$ there exists a
countable subset $N$ of $E$ such that $E=N+U$. Clearly for metrizable lcs trans-separability and separability are equivalent concepts.

\begin{lemma}[{\cite[Cor. 6.8]{kak}}] \label{sep}
The strong dual of a lcs $E$ is trans-separable if and only if every bounded set in $E$ is metrizable in the weak topology  of $E$.
\end{lemma}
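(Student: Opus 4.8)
The plan is to translate trans-separability of the strong dual into a separability statement about seminorms and then to match that statement with weak metrizability of bounded sets. For a bounded $B\subset E$ write $p_B(f)=\sup_{x\in B}|f(x)|$ for $f\in E'$, and recall that the polars $B^{\circ}=\{f\in E':p_B(f)\le 1\}$ of bounded sets form a base of neighbourhoods of $0$ for $\beta(E',E)$. The first step I would record separately is that $(E',\beta(E',E))$ is trans-separable if and only if, for every bounded $B$, the seminormed space $(E',p_B)$ is separable. Indeed, applying trans-separability to the neighbourhoods $\tfrac1n B^{\circ}$ gives for each $n$ a countable $N_n$ with $E'=N_n+\tfrac1n B^{\circ}$; then $N:=\bigcup_n N_n$ is $p_B$-dense, since $f\in N_n+\tfrac1n B^{\circ}$ means precisely that some $g\in N_n$ satisfies $p_B(f-g)\le\tfrac1n$. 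The converse is the same computation read backwards: a countable $p_B$-dense set $N$ yields $E'=N+B^{\circ}$. Thus it suffices to prove, for each fixed bounded $B$, that $(E',p_B)$ is separable if and only if $(B,\sigma(E,E'))$ is metrizable.

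For the easy direction, suppose $N=\{g_m\}$ is a countable $p_B$-dense subset of $E'$. Then every $f\in E'$ is a uniform limit on $B$ of members of $N$, so each restriction $f\restriction B$ is continuous for the topology on $B$ generated by $\{g_m\}$; hence $\sigma(E,E')\restriction B$ coincides with this countably generated topology. Being Hausdorff and generated by countably many seminorms, $(B,\sigma(E,E'))$ is metrizable.

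For the harder converse I would first reduce to an absolutely convex $B$: the algebraic absolutely convex hull $\Gamma(B)$ is still bounded, is metrizable by hypothesis, and satisfies $p_{\Gamma(B)}=p_B$, so nothing is lost. With $B$ absolutely convex and bounded, let $X:=\bigcup_n nB$ carry the norm $\mu_B$ whose unit ball is $B$; then $p_B(f)=\|f\restriction X\|_{X^{*}}$, so $(E',p_B)$ is separable exactly when $G:=\{f\restriction X:f\in E'\}\subset X^{*}$ is norm-separable. Now $\sigma(E,E')\restriction B$ is precisely $\sigma(X,G)\restriction B$, so by hypothesis the unit ball $B$ of $X$ is $\sigma(X,G)$-metrizable. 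Choosing a countable base $\{V_n\}$ of $\sigma(X,G)$-neighbourhoods of $0$ in $B$, with each $V_n$ determined by a finite set $\Phi_n\subset G$, put $S=\bigcup_n\Phi_n$. The base property says that each $g\in G$ is, on $B$, controlled by finitely many members of $S$; the standard argument that weak metrizability of the unit ball forces the norming functionals into the closed span of countably many of them then yields $G\subset\overline{\operatorname{span}}^{\,\|\cdot\|}(S)$. Hence $G$, and with it $(E',p_B)$, is norm-separable.

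Running both directions over all bounded $B$ gives the equivalence. The main obstacle is the converse, and inside it the passage from metrizability of the ball $B$ to norm-separability of $G$: one must upgrade first countability at $0$ into genuine norm control, i.e. verify carefully the scaling and Hahn--Banach argument showing that a functional dominated on $B$ by $\max_{h\in\Phi_n}|h|$ lies, up to arbitrarily small norm error, in $\operatorname{span}(\Phi_n)$. The preliminary reduction to absolutely convex $B$ is exactly what makes a genuine norm and unit ball available, so that this classical argument applies.
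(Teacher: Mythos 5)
The paper never proves this lemma: it is imported verbatim as \cite[Cor.\ 6.8]{kak}, so there is no internal argument to measure yours against. Your proof is correct and self-contained, and is essentially the argument one expects behind that citation. The translation of trans-separability of $(E',\beta(E',E))$ into separability of each seminormed space $(E',p_B)$ via the polar neighbourhoods $\tfrac1n B^{\circ}$ is right; the easy implication (a countable $p_B$-dense set $\{g_m\}$ makes $\sigma(E,E')\restriction B$ equal to the initial topology of the $g_m$, hence metrizable, since uniform-on-$B$ limits of continuous functions are continuous) is fine; and the converse, run through the normed space $\bigl(\operatorname{span}B,\mu_B\bigr)$ and the classical computation $\operatorname{dist}\bigl(g,\operatorname{span}\Phi_n\bigr)=\bigl\|\,g\restriction\bigcap_{h\in\Phi_n}\ker h\,\bigr\|\le\eps$, correctly identifies and carries out the one nontrivial step.

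One sentence you wrote is literally false and should be repaired, although it does not damage the proof. You claim that for each \emph{fixed} bounded $B$ one has $(E',p_B)$ separable if and only if $(B,\sigma(E,E'))$ is metrizable. Take $E=\ell_1$ and $B=\{e_n:n\in\NN\}$: each $e_n$ is weakly isolated in $B$ (use the $n$-th coordinate functional), so $B$ is weakly discrete and hence metrizable, yet $p_B$ is the $\ell_\infty$-norm on $E'=\ell_\infty$, which is not separable. The pointwise biconditional holds only for absolutely convex $B$. Your actual argument survives because, when you pass to $\Gamma(B)$, you invoke the \emph{global} hypothesis that every bounded set --- in particular $\Gamma(B)$ --- is weakly metrizable, together with $p_{\Gamma(B)}=p_B$; the reduction should be stated in exactly that form (quantified over all bounded sets, or restricted to absolutely convex ones) rather than as an equivalence for each individual $B$.
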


Recall that a  Fr\'{e}chet lcs $E$ satisfies the  \emph{density condition} if every bounded set in $E'$ (with the strong topology) is metrizable (cf. \cite[Prop. 6.16]{kak}).
The class of such spaces includes Fr\'{e}chet-Montel locally convex spaces and  quasinormable Fr\'{e}chet locally convex spaces. The latter class contains  all Banach spaces, as well as every $(FS)$-space (see  \cite{bierstedt2}).
In \cite{GKKLP} we proved the following
\begin{proposition}[\cite{GKKLP}]  \label{pro}
Let $E$ be a Fr\'echet lcs and $E'$ be its strong
dual. Then
\begin{itemize}
\item[\textrm{(i)}] If $E'$ is separable, then $E$ is a weakly $\aleph_{0}$-space.
\item[\textrm{(ii)}] If $E$ is a weakly $\aleph_{0}$-space not containing a copy of $\ell_{1}$, then $E'$ is trans-separable.
\item[\textrm{(iii)}] If $E$ is a weakly $\aleph_{0}$-space, then $E'$ is trans-separable if and only if every bounded set in $E$ is Fr\'echet-Urysohn in the weak topology of $E$.
\item[\textrm{(iv)}] If $E$ satisfies the density condition and does not contain a copy of $\ell_{1}$, then  $E$ is a weakly $\aleph_{0}$-space  if and only if $E'$ is separable.
\item[\textrm{(v)}] If $E$ does not contain a copy of $\ell_{1}$, then every bounded set in $E$ is Fr\'echet-Urysohn in $\sigma(E,E')$.
\end{itemize}
\end{proposition}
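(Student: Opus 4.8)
The five parts are most naturally organised around two independent cornerstones, (i) and (v), from which the rest follow by combination. Part (ii) is the conjunction of (v) with the nontrivial direction of (iii); part (iv) combines (i) (for the ``if'' direction), (ii) (for the ``only if'' direction) and a separation argument using the density condition; and (iii) itself splits into an immediate direction and a delicate one. So the plan is to prove (i) and (v) directly, then (iii), and finally to assemble (ii) and (iv).

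For (i), assume the strong dual $E'$ is separable and fix a countable $\{f_n : n\in\NN\}$ dense in $E'$. It is standard that a metrizable lcs with separable strong dual is itself separable, and the strong topology on $E'$ is that of uniform convergence on bounded subsets of $E$; hence on every bounded $B\subseteq E$ the topology $\sigma(E,E')$ is generated by $\{f_n\}$ and is therefore separable and metrizable, so that in particular every $\sigma(E,E')$-compact set is metrizable with weak topology determined by $\{f_n\}$. I would then build a countable $k$-network from the ``rational cylinders'' $\{x\in E : |f_{n_j}(x)-q_j|<r,\ j\le l\}$ (with $n_j\in\NN$, $q_j,r\in\Q$), organised along the decomposition $E'=\bigcup_k U_k^\circ$ arising from a decreasing base $\{U_k\}$ of neighbourhoods of $0$. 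The technical core is the verification of the $k$-network property: given a weakly compact $K$ inside a weakly open $U$, one uses the metrizability of $K$ together with the fact that $\{f_n\}$ approximates, uniformly on the bounded set $K$, the finitely many functionals defining a basic weak neighbourhood of $K$ contained in $U$, so that finitely many cylinders sandwich $K$ between itself and $U$. The subtlety here is that a Fr\'echet space need not be $\sigma$-bounded, so the network cannot simply be read off a fundamental sequence of bounded sets; it must instead recognise each (necessarily bounded) compact set through its trace topology.

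For (v), assume $E$ contains no copy of $\ell_1$ and fix a bounded $B\subseteq E$; to prove $B$ Fr\'echet--Urysohn in $\sigma(E,E')$ take $A\subseteq B$ and $x\in\overline A^{\,w}\cap B$. Passing to the closed (hence Fr\'echet) separable subspace generated by $A\cup\{x\}$, which again contains no copy of $\ell_1$, I would invoke Rosenthal's $\ell_1$-theorem and the resulting angelicity of $(\ball_{E''}, \sigma(E'',E'))$ for separable $\ell_1$-free spaces (see \cite{fabian}): since $x$ lies in the $\sigma(E'',E')$-closure of the bounded set $A$, angelicity yields a sequence $(a_m)\subseteq A$ with $a_m\to x$ in $\sigma(E'',E')$. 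The decisive observation is that for a sequence coming from $E$ and a limit $x\in E$, convergence in $\sigma(E'',E')$ means exactly $f(a_m)\to f(x)$ for every $f\in E'$, i.e. weak convergence in $E$; hence $a_m\to x$ weakly and $B$ is Fr\'echet--Urysohn. The remaining obstacle in (v) is the passage from the Banach to the Fr\'echet setting of Rosenthal's dichotomy, which I would handle by working inside the local Banach spaces associated with a fundamental sequence of seminorms.

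Finally, for (iii) assume $E$ is a weakly $\aleph_0$-space. The direction ``$E'$ trans-separable $\Rightarrow$ bounded sets weakly Fr\'echet--Urysohn'' is immediate: by Lemma~\ref{sep} trans-separability makes every bounded set weakly metrizable, and metrizable spaces are Fr\'echet--Urysohn. For the converse, fix a bounded $B$; as a subspace of the $\aleph_0$-space $E_w$ it inherits a countable $k$-network, hence it is an $\aleph$-space of countable $cs^\ast$-character, and by hypothesis it is Fr\'echet--Urysohn. To conclude that $B$ is weakly metrizable via Theorem~\ref{tMetr-Aleph} I must also verify the property $(\alpha_4)$ for the weak topology on $B$; this is the delicate point, and I expect it to be the main obstacle of the whole proposition. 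The natural route is to exploit that $E_w$ is a topological vector group: Nyikos's theorem \cite[Theorem 4]{nyikos} gives $(\alpha_4)$ for Fr\'echet--Urysohn topological groups, and one transfers it to $B$ through the $\sigma$-bounded subgroup it generates, whose bounded pieces are Fr\'echet--Urysohn by hypothesis. Lemma~\ref{sep} then returns trans-separability, proving (iii). With (iii) available, (ii) is just (v) followed by its nontrivial direction. For (iv), the ``if'' direction is (i), while for ``only if'' part (ii) makes $E'$ trans-separable; since $E'$ is the strong dual of a metrizable space it carries a fundamental sequence of bounded sets, each metrizable by the density condition and hence separable (trans-separability passes to subsets and coincides with separability for metrizable spaces), so $E'$ is separable as a countable union of separable subspaces.
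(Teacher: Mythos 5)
First, a point of order: the paper does not prove Proposition~\ref{pro} at all --- it is quoted verbatim from \cite{GKKLP} (``In \cite{GKKLP} we proved the following''), so there is no in-paper argument to measure your proposal against. Your overall architecture is nevertheless the natural one, and it matches the way the present paper assembles the analogous $\aleph$-space statements: Theorem~\ref{dual-Aleph} proves the counterpart of (iii) exactly along your lines (bounded sets Fr\'echet--Urysohn $\Rightarrow$ Fr\'echet--Urysohn plus $(\alpha_4)$ $\Rightarrow$ metrizable via Theorem~\ref{tMetr-Aleph} $\Rightarrow$ Lemma~\ref{sep}), and Theorem~\ref{tDensity} proves the counterpart of (iv) by the same $(DF)$-space argument you give. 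So your derivations of (ii) and (iv) from (i), (iii) and (v) are sound.

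The genuine gaps are in the three ``direct'' parts. In (i), the rational cylinders cannot serve as a $k$-network as described: the approximation of $g\in E'$ by the countable strongly dense set is uniform only on bounded sets, so a cylinder through $x$ is contained in the prescribed weak neighbourhood of $x$ only after intersecting with a bounded set. In the Banach case one intersects with the balls $m\ball_E$; but a non-normable Fr\'echet space has no countable cofinal family of bounded sets, and your remark about $E'=\bigcup_k U_k^\circ$ does not produce network members that actually lie inside the given open set $U$. This is precisely where the Fr\'echet case is harder than Corson--Michael's Banach case, and it is left unresolved. In (iii), Nyikos's theorem \cite{nyikos} cannot be invoked as you suggest: $(E,\sigma(E,E'))$ is a topological group but is not Fr\'echet--Urysohn, and the subgroup generated by a bounded set is unbounded, hence not covered by the hypothesis. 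What is needed is that Nyikos's diagonalization only evaluates the Fr\'echet--Urysohn property on finite algebraic combinations of the given bounded set, which remain bounded --- this is the content of \cite[Lemma 3.2]{GKKLP}, which the paper cites for exactly this step in the proof of Theorem~\ref{dual-Aleph}. In (v), the reduction to a separable subspace fails as stated: if $A$ is uncountable, the closed linear span of $A\cup\{x\}$ need not be separable, and replacing $A$ by a countable subset still having $x$ in its weak closure amounts to countable tightness of bounded sets in the weak topology, which is part of what is being proved; on top of this the Rosenthal/angelicity machinery must be transported from Banach to Fr\'echet spaces. None of these defects is fatal to the strategy, but each is a real missing step rather than a routine verification.
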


\begin{corollary}\label{fre}
A reflexive Fr\'echet lcs $E$  is a weakly $\aleph$-space if and only if $E$ is separable (if and only if $E$ is a weakly $\aleph_{0}$-space).
\end{corollary}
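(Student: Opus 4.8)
The plan is to show that the three properties are equivalent: $(A)$ ``$E$ is a weakly $\aleph$-space'', $(B)$ ``$E$ is separable'', and $(C)$ ``$E$ is a weakly $\aleph_{0}$-space''. The implication $(C)\Rightarrow(A)$ is immediate, a countable $k$-network being trivially $\sigma$-locally finite. The feature of reflexivity I would use throughout is that $E''=E$, so that on the dual the weak$^*$ topology $\sigma(E',E)$ is at the same time the weak topology $\sigma(E',E'')$ of the strong dual $E'$; combined with the elementary fact that for every locally convex space weak separability coincides with separability (the weak and the original topology are compatible, hence share the same closed convex sets, so a countable weakly dense set has dense rational linear span), this allows me to promote separability statements about $(E',\sigma(E',E))$ to genuine separability of the strong dual $E'$. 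This is precisely the point where reflexivity cannot be dispensed with: for non-reflexive $E$ such as $\ell_{1}$, whose strong dual is $\ell_{\infty}$, the space $(E',\sigma(E',E))$ may be separable while $E'$ is not, which is exactly why $\ell_{\infty}$ fulfils Proposition~\ref{p-Nec}(i)--(iii) and yet is not weakly $\aleph$.

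For the substantive implication $(A)\Rightarrow(C)$ I would proceed as follows. Every $\aleph$-space is a $\sigma$-space, so a weakly $\aleph$-space is in particular a weakly $\sigma$-space, and Proposition~\ref{p-Nec}(iii) gives that $(E',\sigma(E',E))$ is separable. By reflexivity this says that the strong dual $E'$ is weakly separable, hence separable by the remark above. Proposition~\ref{pro}(i), applied to the Fréchet space $E$ with separable strong dual, then yields that $E$ is a weakly $\aleph_{0}$-space, which is $(C)$.

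It remains to bring in $(B)$. For $(C)\Rightarrow(B)$ I would use that every $\aleph_{0}$-space is separable; thus $E$ is weakly separable and therefore separable. For $(B)\Rightarrow(C)$ the plan is to verify that a separable reflexive Fréchet space has separable strong dual and then apply Proposition~\ref{pro}(i) again. To see the former, note that $E$, being Fréchet, is barrelled, so that bounded subsets of $E'$ are equicontinuous; by the Alaoglu--Bourbaki theorem they are relatively weak$^*$-compact, and the separability of $E$ makes them weak$^*$-metrizable, hence weak$^*$-separable. As $E'$ is a $(DF)$-space it is covered by countably many such bounded sets, and reflexivity identifies the weak$^*$ with the weak topology, so $E'$ is weakly separable and therefore separable. (Alternatively one may simply quote that for reflexive Fréchet spaces the separability of $E$ and of $E'$ are equivalent.)

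I expect the main difficulty to lie not in any single computation but in getting the separability bookkeeping exactly right: everything rests on the two identifications ``$(E',\sigma(E',E))$ separable $\Leftrightarrow$ $E'$ weakly separable'' (reflexivity) and ``weakly separable $\Leftrightarrow$ separable'' (compatibility of the topologies), together with the one genuinely functional-analytic input that a separable reflexive Fréchet space has separable strong dual. Once these are secured, Proposition~\ref{p-Nec} and Proposition~\ref{pro}(i) close every loop; it is worth stressing that, in contrast with the other results of this section, neither the density condition nor the absence of a copy of $\ell_{1}$ is needed here, reflexivity alone doing the work.
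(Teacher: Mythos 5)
Your proof is correct and follows essentially the same route as the paper's: Proposition~\ref{p-Nec} plus the reflexivity identification of $\sigma(E',E)$-separability with separability of the strong dual, closed off by Proposition~\ref{pro}(i). The extra details you supply (that weak separability coincides with separability, and the weak$^*$-metrizability argument for the implication from separability of $E$ to separability of $E'$) merely make explicit what the paper leaves implicit.
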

\begin{proof}
As $E$ is reflexive, $(E',\sigma(E',E))$ is separable if and only if $(E',\beta(E',E))$ is separable. Assume that $E$ is a weakly $\aleph$-space. Then $(E',\sigma(E',E))$ is separable by Proposition \ref{p-Nec}, so  $(E',\beta(E',E))$ is separable. By Proposition \ref{pro}(i) the space $E$ is a weakly $\aleph_{0}$-space. In particular, $E$ is separable. Conversely, if $E$ is separable then $(E',\sigma(E',E))$ is separable and Proposition \ref{pro} (i)  applies.
\end{proof}
Since every nuclear Fr\'{e}chet space is a separable reflexive space, see \cite{bierstedt2},  we have
\begin{corollary} \label{cNuclear}
Every nuclear Fr\'{e}chet space is a weakly $\aleph_{0}$-space.
\end{corollary}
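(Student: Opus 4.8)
The plan is to derive the statement immediately from Corollary~\ref{fre} once the relevant structural facts about nuclear Fr\'echet spaces are in place. First I would recall the two classical properties of a nuclear Fr\'echet space $E$ that are needed: $E$ is separable and $E$ is reflexive. Both facts are standard (a nuclear Fr\'echet space is in particular Fr\'echet--Montel, hence reflexive, and its metrizability together with nuclearity forces separability) and may be quoted from \cite{bierstedt2}, exactly as indicated in the sentence preceding the statement.

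With reflexivity and separability secured, the conclusion follows in one step. Corollary~\ref{fre} asserts that a reflexive Fr\'echet lcs $E$ is a weakly $\aleph_0$-space if and only if $E$ is separable. Applying this to our nuclear Fr\'echet space $E$, which is reflexive, and using that $E$ is separable, we conclude that $E$ is a weakly $\aleph_0$-space, as desired.

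If one prefers to avoid invoking Corollary~\ref{fre} as a black box, the same conclusion is reached through Proposition~\ref{pro}(i): since $E$ is reflexive, weak$^*$ separability of $E'$ coincides with separability of the strong dual, and separability of $E$ yields $\sigma(E',E)$-separability of $E'$ (compare Lemma~\ref{propDual}); Proposition~\ref{pro}(i) then gives that $E$ is a weakly $\aleph_0$-space. I expect no genuine obstacle in this argument: the only point meriting attention is to confirm that nuclear Fr\'echet spaces are \emph{simultaneously} separable and reflexive, so that both the hypothesis (reflexivity) and the separability criterion of Corollary~\ref{fre} are met; everything else is formal.
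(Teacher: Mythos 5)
Your proposal is correct and follows exactly the paper's route: the corollary is deduced from Corollary~\ref{fre} together with the standard fact (quoted from \cite{bierstedt2}) that nuclear Fr\'echet spaces are separable and reflexive. The alternative via Proposition~\ref{pro}(i) is also sound, but the paper's one-line derivation is the same as your primary argument.
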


We apply  Theorem \ref{tMetr-Aleph}  to extend parts  (ii) and (iii) of Proposition \ref{pro}.
\begin{theorem} \label{dual-Aleph}
Let $E$ be a lcs which is a weakly $\aleph$-space. Then the strong dual $E'$ of $E$ is trans-separable if and only if every bounded set in $E$ is Fr\'echet-Urysohn in the weak topology of $E$.
If in addition $E$ is a Fr\'echet lcs not containing a copy of $\ell_{1}$,  then  $E'$  is trans-separable.
\end{theorem}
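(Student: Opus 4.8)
The plan is to derive both assertions from the metrization criterion for $\aleph$-spaces (Theorem \ref{tMetr-Aleph}) together with the duality Lemma \ref{sep}, reducing the second assertion to the first. Throughout write $E_w=(E,\sigma(E,E'))$, and recall that $\sigma$-locally finite $k$-networks are inherited by subspaces, so that every subspace of the weakly $\aleph$-space $E_w$ is again an $\aleph$-space. By Lemma \ref{sep}, the trans-separability of $E'$ is equivalent to the metrizability of every bounded set in the weak topology, so for the equivalence it suffices to prove that every bounded set in $E$ is metrizable in $\sigma(E,E')$ if and only if every bounded set is Fr\'echet-Urysohn in $\sigma(E,E')$. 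The forward implication is immediate, since metrizable spaces are Fr\'echet-Urysohn.

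For the converse I would first reduce to closed absolutely convex bounded sets: an arbitrary bounded set sits as a subspace inside its closed absolutely convex hull, which is again bounded, and metrizability passes to subspaces. So fix a closed, absolutely convex, bounded set $B$. With the topology inherited from $E_w$, the set $B$ is an $\aleph$-space (as a subspace of $E_w$) and is Fr\'echet-Urysohn by hypothesis. By Theorem \ref{tMetr-Aleph}, to conclude that $B$ is metrizable it remains only to verify that $B$ has property $(\alpha_4)$.

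The heart of the argument—and the step I expect to be the main obstacle—is establishing $(\alpha_4)$ for $B$. The natural tool is Nyikos's theorem \cite[Theorem 4]{nyikos}, which yields $(\alpha_4)$ for every Fr\'echet-Urysohn topological group; the difficulty is that $E_w$ is a topological group but is typically \emph{not} Fr\'echet-Urysohn (only its bounded subsets are), so the theorem does not apply directly. Moreover one cannot repair this by passing to a subgroup, a rational span, or a closed linear span, since these are unbounded and hence need not be Fr\'echet-Urysohn. My plan is therefore to run Nyikos's group-theoretic argument \emph{inside the single bounded set} $B$. Given an array $(x_{m,n})$ in $B$ with $\lim_n x_{m,n}=x$ for every $m$, I would translate so that $x=0$ (the array then lies in the bounded set $B-x\subseteq 2B$) and replace the group sums appearing in Nyikos's construction by averages $\tfrac1k\sum_i x_{m_i,n_i}$, or more generally by rational convex combinations. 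Absolute convexity guarantees that all such auxiliary points remain in $B$, and that the finitely many dilates $kB$ needed are bounded and hence Fr\'echet-Urysohn; this keeps the entire construction inside sets where the hypothesis is available, and should allow one to extract a genuine diagonal sequence $x_{m_k,n_k}\to 0$ with the $m_k$ distinct. The technical crux is to check that this averaged version of Nyikos's argument still forces a selection of \emph{actual} array elements rather than of auxiliary combinations.

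Granting $(\alpha_4)$, Theorem \ref{tMetr-Aleph} shows that $B$ is metrizable; since $B$ was an arbitrary closed absolutely convex bounded set, every bounded set is metrizable in $\sigma(E,E')$, and Lemma \ref{sep} then gives that $E'$ is trans-separable, completing the equivalence (which extends Proposition \ref{pro}(iii) from the weakly $\aleph_0$ to the weakly $\aleph$ setting). Finally, for the additional statement, suppose $E$ is a Fr\'echet lcs not containing a copy of $\ell_1$. By Proposition \ref{pro}(v) every bounded set in $E$ is Fr\'echet-Urysohn in $\sigma(E,E')$, so the equivalence just proved immediately yields that $E'$ is trans-separable; this in turn extends Proposition \ref{pro}(ii) from the weakly $\aleph_0$ to the weakly $\aleph$ setting.
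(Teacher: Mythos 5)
Your proposal follows the paper's proof essentially step for step: Lemma \ref{sep} converts trans-separability of $E'$ into weak metrizability of bounded sets, each bounded set is an $\aleph$-space as a subspace of $(E,\sigma(E,E'))$, Theorem \ref{tMetr-Aleph} reduces metrizability to the Fr\'echet--Urysohn and $(\alpha_4)$ properties, and the final assertion is obtained from Proposition \ref{pro}(v) exactly as in the paper. The single point of divergence is the step you correctly single out as the crux: that a weakly Fr\'echet--Urysohn bounded set satisfies $(\alpha_4)$. The paper does not prove this; it cites \cite[Lemma 3.2]{GKKLP}. You instead sketch a direct argument by localizing Nyikos's proof to the bounded set, and your diagnosis of why \cite[Theorem 4]{nyikos} cannot be applied verbatim (the whole group $E_w$ is not Fr\'echet--Urysohn, and unbounded subgroups do not inherit the hypothesis) is exactly right. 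The repair is, however, simpler than the averaging scheme you propose: after translating so that $x=0$, Nyikos's construction only ever forms sums of \emph{two} array elements, and for an absolutely convex bounded $B$ these land in $2B$, which is again bounded and hence Fr\'echet--Urysohn by hypothesis; continuity of addition in the topological vector space $E_w$ then recovers a genuine diagonal subsequence from the auxiliary sums, with no convex combinations needed. As written, though, this step of your argument remains a sketch ("should allow one to extract\dots", "the technical crux is to check\dots"), so the proposal is an outline of the same proof rather than a complete one; to close it you should either carry out the localized Nyikos argument in full or, as the paper does, invoke \cite[Lemma 3.2]{GKKLP}.
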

\begin{proof}
If $E'$ is trans-separable, then every bounded set in $E$ is metrizable in $\sigma(E,E^{\prime })$ by Lemma \ref{sep}. Conversely, if every bounded set in $E$ is Fr\'echet-Urysohn in $\sigma(E,E')$, apply  \cite[Lemma 3.2]{GKKLP} to see that every bounded set $B$ in $E$ is a Fr\'echet-Urysohn $(\alpha_4)$-space in $\sigma(E,E')$. As a subspace of the $\aleph$-space $(E,\sigma(E,E'))$,  $B$ is also an $\aleph$-space. By Theorem \ref{tMetr-Aleph}, $B$ is metrizable. Finally, Lemma \ref{sep} applies to get the trans-separability of $E'$.
The last assertion follows from the first one and Proposition \ref{pro}(v).
\end{proof}
As the strong dual of a Banach space is normed, this theorem combined with Proposition \ref{pro} yield the following
\begin{corollary} \label{c-Equiv}
Let $E$ be a Banach space not containing a copy of $\ell_{1}$. Then $E$ is a weakly $\aleph$-space if and only if  $E$ is a weakly $\aleph_0$-space if and only its strong dual  $E'$ is separable.
\end{corollary}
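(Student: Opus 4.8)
The plan is to close a cycle of three implications among the conditions (a) $E$ is a weakly $\aleph$-space, (b) $E$ is a weakly $\aleph_0$-space, and (c) the strong dual $E'$ is separable, exploiting that a Banach space is in particular a Fr\'echet lcs whose strong dual is a \emph{normed} space.

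First I would record the trivial implication (b) $\Rightarrow$ (a): a countable $k$-network is automatically $\sigma$-locally finite (write it as a countable union of one-element, hence locally finite, families), so every $\aleph_0$-space is an $\aleph$-space, and this passes verbatim to the weak topology. Next, for (c) $\Rightarrow$ (b) I would simply invoke Proposition \ref{pro}(i): if $E'$ is separable, then $E$ is a weakly $\aleph_0$-space (recall a Banach space is a Fr\'echet lcs).

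The only implication carrying real content is (a) $\Rightarrow$ (c). Here I would apply the second assertion of Theorem \ref{dual-Aleph}: since $E$ is a Fr\'echet lcs not containing a copy of $\ell_1$ and is a weakly $\aleph$-space, its strong dual $E'$ is trans-separable. To upgrade trans-separability to separability I would use the hypothesis that $E$ is a Banach space: then $E'$ is a normed (indeed Banach) space, and for metrizable lcs trans-separability and separability coincide, as noted before Lemma \ref{sep}. Hence $E'$ is separable, and the cycle (a) $\Rightarrow$ (c) $\Rightarrow$ (b) $\Rightarrow$ (a) yields all three equivalences.

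The main obstacle is not really located in this corollary at all: essentially all of the analytic difficulty has been absorbed into Theorem \ref{dual-Aleph} (through the metrization criterion of Theorem \ref{tMetr-Aleph} together with the Fr\'echet--Urysohn/$(\alpha_4)$ structure of bounded sets supplied by Proposition \ref{pro}(v)). The one genuinely new point to watch is precisely the passage from trans-separability to separability, which is exactly where the Banach (rather than merely Fr\'echet) hypothesis on $E$ enters, via the normability of the strong dual.
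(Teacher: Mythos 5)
Your proposal is correct and follows exactly the route the paper intends: the paper derives this corollary in one line from Theorem \ref{dual-Aleph} combined with Proposition \ref{pro}, using precisely your observation that the strong dual of a Banach space is normed, so trans-separability upgrades to separability. Your cycle (a) $\Rightarrow$ (c) $\Rightarrow$ (b) $\Rightarrow$ (a) is just a careful spelling-out of the same argument.
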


Any reflexive Fr\'echet lcs $E$ does not contain  a copy of $\ell_{1}$, but $E$ may not satisfy the density condition \cite{bierstedt1}.
The following result  generalizes  (iv) of Proposition \ref{pro}.
\begin{theorem} \label{tDensity}
Let $E$ be a Fr\'echet lcs not containing a copy of $\ell_{1}$ and  satisfying the density condition. Then $E$ is a weakly $\aleph$-space if and only if  $E$ is a weakly $\aleph_0$-space if and only if the strong dual of $E'$ of $E$ is separable.
\end{theorem}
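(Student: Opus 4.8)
The plan is to prove the triple equivalence by closing a short cycle of implications, exploiting the fact that most of the work is already contained in Proposition \ref{pro} and Theorem \ref{dual-Aleph}. Two of the three implications are essentially free: a weakly $\aleph_0$-space is trivially a weakly $\aleph$-space, since a countable $k$-network is in particular $\sigma$-locally finite; and, because $E$ satisfies the density condition and contains no copy of $\ell_1$, Proposition \ref{pro}(iv) already gives the equivalence between ``$E$ is a weakly $\aleph_0$-space'' and ``$(E',\beta(E',E))$ is separable''. Thus the only substantial point is to show that if $E$ is a weakly $\aleph$-space, then its strong dual is separable.

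First I would invoke Theorem \ref{dual-Aleph}: since $E$ is a Fr\'echet lcs which is a weakly $\aleph$-space and does not contain a copy of $\ell_1$, the strong dual $E'$ is trans-separable. This is the step that consumes the weak $\aleph$-space hypothesis, through Proposition \ref{pro}(v) (every bounded set is Fr\'echet-Urysohn in $\sigma(E,E')$) and the upgrade furnished by Theorem \ref{dual-Aleph}.

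The main work is then to promote trans-separability of $(E',\beta(E',E))$ to genuine separability, and this is exactly where the density condition enters. Here I would use that the strong dual of a Fr\'echet space is a $(DF)$-space, hence carries a fundamental sequence $(B_n)_{n\in\NN}$ of bounded sets with $E'=\bigcup_{n\in\NN} B_n$ (every point lies in a bounded set). By the density condition each $B_n$ is metrizable in $\beta(E',E)$. Trans-separability passes to subsets: if $E'$ is covered by countably many translates of each neighborhood $U$ of $0$, then so is $B_n$, after selecting representatives inside $B_n$, so each $B_n$ is an $\aleph_0$-bounded metrizable (uniform) space; and for metrizable spaces $\aleph_0$-boundedness coincides with separability. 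Hence every $B_n$ is separable and $E'=\bigcup_{n\in\NN} B_n$ is a countable union of separable subsets, so $(E',\beta(E',E))$ is separable. Once separability is in hand, Proposition \ref{pro}(i) returns that $E$ is a weakly $\aleph_0$-space, closing the cycle.

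I expect the delicate point to be precisely the passage from trans-separability to separability. Trans-separability is only a covering-type ($\aleph_0$-bounded) property and is in general strictly weaker than topological separability for non-metrizable spaces such as a $(DF)$-dual; so one cannot conclude directly from Theorem \ref{dual-Aleph}. The argument must genuinely exploit the metrizability of the bounded pieces $B_n$ supplied by the density condition, together with the $(DF)$-structure that lets countably many such pieces exhaust $E'$. This is the only place where the density condition is used, and it is what allows the theorem to generalize Proposition \ref{pro}(iv) from $\aleph_0$-spaces to $\aleph$-spaces.
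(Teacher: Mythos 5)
Your overall route is the same as the paper's: the trivial implication (weakly $\aleph_0$ $\Rightarrow$ weakly $\aleph$), Proposition \ref{pro}(i)/(iv) to get back from separability of the strong dual, Theorem \ref{dual-Aleph} to extract trans-separability of $E'$ from the weak $\aleph$-space hypothesis, and then the density condition together with the $(DF)$-structure of $E'$ to upgrade trans-separability to separability. The paper performs this last upgrade by quoting Corollary 4.12 of \cite{GKKLP} for a trans-separable lcs covered by a fundamental sequence $(Q_n)$ of metrizable bounded \emph{absolutely convex} sets; you instead sketch a direct argument, and that is where there is a genuine gap.

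The step ``each $B_n$ is an $\aleph_0$-bounded metrizable (uniform) space, and for metrizable spaces $\aleph_0$-boundedness coincides with separability'' is false as stated. The density condition gives that each $Q_n$ is metrizable in its \emph{subspace topology}; it does not give that the uniformity induced on $Q_n$ by $\beta(E',E)$ has a countable base, and $\aleph_0$-boundedness of the induced uniformity plus topological metrizability does not imply separability for an arbitrary subset. For a counterexample to the general principle, take the set $\{e_\gamma:\gamma\in\Gamma\}$ of canonical unit vectors inside $\RR^\Gamma$ for uncountable $\Gamma$: it is bounded, $\aleph_0$-bounded for the induced uniformity (one translate of any basic neighborhood of $0$ absorbs all but finitely many $e_\gamma$), and discrete, hence metrizable, yet uncountable and non-separable. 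What rescues the argument in the present situation is that the $Q_n$ are absolutely convex: for an absolutely convex bounded set, metrizability of the subspace topology is equivalent to the induced uniformity having a countable base (one transports a countable neighborhood base at $0$ to every point of $Q_n$ by convexity), and only then does $\aleph_0$-boundedness yield separability of each $Q_n$ and hence of $E'=\bigcup_n Q_n$. This is exactly the content of the cited Corollary 4.12 of \cite{GKKLP}; your proof needs either that citation or the absolute convexity argument made explicit.
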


\begin{proof}
Clearly, the strong dual $E^{\prime }$ is a $(DF)$-space, see \cite[Theorem 8.3.9]{bonet}, with a fundamental sequence $(Q_{n})_{n}$ of absolutely convex bounded subsets of $E^{\prime }$. Since $E$ satisfies the density condition, every bounded set $Q_{n}$ is
metrizable by \cite[Corollary 3]{bierstedt2}.
Assume now that $E$ is a weakly $\aleph$-space. By Theorem \ref{dual-Aleph} the strong dual $E'$ is trans-separable. So the trans-separable  lcs $E'$ is covered by a sequence of metrizable bounded absolutely convex sets $(Q_{n})_{n}$. Now Corollary 4.12 of \cite{GKKLP} implies that $E'$ is separable. As  $E'$ is separable, then $E$ is a weakly $\aleph_0$-space by Proposition \ref{pro}(i). Finally, if $E$ is a weakly $\aleph_0$-space it is also a weakly $\aleph$-space.
\end{proof}

Since every Fr\'echet lcs $C_{c}(X)$  satisfies the density condition (see \cite{peris} or \cite{bierstedt2}),  we apply Theorem \ref{tDensity} to get
\begin{corollary} \label{quasi}
Let $E:=C_{c}(X)$ be a Fr\'echet lcs not containing a copy of $\ell_{1}$. Then $E$ is a weakly $\aleph$-space if and only if $E$ is a weakly $\aleph_{0}$-space if and only if the strong dual $E'$ of $E$ is separable.
\end{corollary}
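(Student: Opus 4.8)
The plan is to obtain this statement as an immediate specialization of Theorem \ref{tDensity}. That theorem already establishes exactly the desired chain of equivalences---weakly $\aleph$-space if and only if weakly $\aleph_0$-space if and only if the strong dual separable---for any Fr\'echet lcs $E$ satisfying two hypotheses: that $E$ contain no copy of $\ell_1$, and that $E$ satisfy the density condition. The first hypothesis is built into the statement of the corollary, so the only thing I would need to check is the second one for $E = C_c(X)$.

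For this I would invoke the known fact that \emph{every} Fr\'echet lcs of the form $C_c(X)$ satisfies the density condition; this is recorded in \cite{peris} (see also \cite{bierstedt2}). Once the density condition is in hand, both hypotheses of Theorem \ref{tDensity} are met by $E = C_c(X)$, and a direct application of that theorem yields all three equivalences simultaneously. No independent argument is required beyond citing the density-condition fact and feeding it into Theorem \ref{tDensity}.

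There is, in truth, no genuine obstacle internal to the corollary: the substantive content is carried entirely by Theorem \ref{tDensity} (whose proof in turn rests on Theorem \ref{tMetr-Aleph}, Lemma \ref{sep}, and the separability criterion of \cite[Corollary 4.12]{GKKLP}) together with the cited density-condition fact for $C_c(X)$. The one point I would verify with care is precisely that the density condition genuinely applies in the present setting---that is, that the standing hypothesis ``$C_c(X)$ is a Fr\'echet lcs'' (equivalently, that $X$ is hemicompact and a $k_R$-space) places $E$ within the scope of the result quoted from \cite{peris,bierstedt2}. Under that standing assumption this is automatic, so the corollary follows at once.
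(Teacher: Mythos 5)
Your proposal is correct and coincides with the paper's own proof: the authors likewise note that every Fr\'echet lcs of the form $C_c(X)$ satisfies the density condition (citing \cite{peris} and \cite{bierstedt2}) and then apply Theorem \ref{tDensity} directly. Nothing further is needed.
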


We need the following useful fact, see also \cite{OMe2} for (ii).
\begin{proposition} \label{pLindel}
Let $X$ be a topological space.
\begin{enumerate}
\item[{\rm (i)}] $X$ is a cosmic space if and only if $X$ is a Lindel\"{o}f $\sigma$-space.
\item[{\rm (ii)}]  $X$ is an $\aleph_0$-space if and only if $X$ is a Lindel\"{o}f $\aleph$-space.
\end{enumerate}
\end{proposition}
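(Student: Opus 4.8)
The plan is to derive both equivalences from a single combinatorial observation about Lindel\"of spaces, after which each direction becomes a short verification. The observation is the following \emph{Key Fact}: in a Lindel\"of space, every locally finite family of nonempty sets is countable. To see this, note that each point has an open neighborhood meeting only finitely many members of the family; these neighborhoods form an open cover, so by Lindel\"ofness there is a countable subcover $\{U_i\}_{i\in\NN}$. Every nonempty member of the family contains a point, which lies in some $U_i$, so the member meets $U_i$; since each $U_i$ meets only finitely many members, the whole family is contained in a countable union of finite sets and is therefore countable.

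With this in hand I would handle the backward (``if'') directions of (i) and (ii) uniformly. Suppose $X$ is a Lindel\"of $\sigma$-space (resp. a Lindel\"of $\aleph$-space), and fix a $\sigma$-locally finite network (resp. $k$-network) $\Nn=\bigcup_{n\in\NN}\Nn_n$ with each $\Nn_n$ locally finite. Discarding the empty set from each $\Nn_n$ changes nothing, so the Key Fact shows that each $\Nn_n$ is countable; hence $\Nn$ itself is countable. Together with regularity this makes $X$ a cosmic space (resp. an $\aleph_0$-space). Observe that this argument is literally the same for both parts, the only difference being whether ``network'' is read as ``network'' or as ``$k$-network''.

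For the forward (``only if'') directions the content is simply that cosmic spaces and $\aleph_0$-spaces are Lindel\"of, because the other halves are immediate: a countable network (resp. $k$-network) is trivially $\sigma$-locally finite, indeed $\sigma$-discrete as a countable union of singleton families, so a cosmic space is a $\sigma$-space and an $\aleph_0$-space is an $\aleph$-space. For the Lindel\"of property in (i), given an open cover $\UU$ I would, for each $x\in X$, choose $U\in\UU$ with $x\in U$ and then a network member $N_x$ with $x\in N_x\subset U$; only countably many distinct members $N_x$ can occur, and selecting one element of $\UU$ containing each realized network member yields a countable subcover. For (ii) one may simply recall that every $\aleph_0$-space is Lindel\"of (as noted in the Introduction); alternatively, the finite unions of members of a countable $k$-network form a countable network, which reduces the Lindel\"ofness of (ii) to the cosmic case already treated.

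The argument is routine and I do not expect any serious obstacle. The only points requiring care are to restrict the families $\Nn_n$ to their nonempty members before applying the Key Fact (so that ``meets $U_i$'' is nonvacuous) and to note that local finiteness is inherited by each piece $\Nn_n$ of the $\sigma$-decomposition. Everything else is bookkeeping, and the same two paragraphs prove (i) and (ii) at once.
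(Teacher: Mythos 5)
Your proposal is correct and follows essentially the same route as the paper: the heart of both arguments is the observation that a locally finite family in a Lindel\"of space is countable (proved identically via a countable subcover of the neighborhoods witnessing local finiteness), applied to each piece $\Nn_n$. The only cosmetic difference is that you prove the Lindel\"ofness of cosmic and $\aleph_0$-spaces inline, whereas the paper cites Michael for it; your explicit handling of empty members is a harmless refinement.
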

\begin{proof}
Assume that $X$ is a Lindel\"of  $\sigma$-space (respectively, an $\aleph$-space)  with a $\sigma$-locally finite network (respectively, $k$-network) $\DD=\bigcup_{n\in\NN} \DD_n$. It is enough to prove that every $\DD_n$ is countable. For every $x\in X$ choose an open neighborhood $U_x$ of $x$ such that $U_x$ intersects with a finite subfamily $T(x)$ of $\DD_n$. Since $X$ is a Lindel\"{o}f space, we can find a countable set $\{ x_k\}_{k\in\NN}$ in $X$ such that $X =\bigcup_{k\in\NN} U_{x_k}$. Hence any $D\in\DD_n$ intersects with some $U_{x_k}$ and therefore $D\in T(x_k)$. Thus $\DD_n = \bigcup_{k\in\NN} T(x_k)$ is countable.

Conversely, if $X$ is a cosmic (respectively, an  $\aleph_0$-space), then $X$ is Lindel\"{o}f (see \cite{Mich}) and it is trivially a  $\sigma$-space (respectively, an   $\aleph$-space).
\end{proof}
\begin{corollary}\label{final}
Let $E$ be a Lindel\"{o}f (in particular, separable metrizable) lcs. Then  $E$ is a weakly $\aleph$-space if and only if $E$ is  a weakly  $\aleph_0$-space.
\end{corollary}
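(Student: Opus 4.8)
The plan is to obtain this as an essentially immediate consequence of Proposition \ref{pLindel}(ii), whose content is that, \emph{for an $\aleph$-space}, being an $\aleph_0$-space is equivalent to being Lindel\"of. Since every $\aleph_0$-space is trivially an $\aleph$-space (a countable $k$-network is automatically $\sigma$-locally finite), the implication from weakly $\aleph_0$ to weakly $\aleph$ is free, and only the reverse direction requires an argument. For that direction it suffices to verify that $(E,\sigma(E,E'))$ is Lindel\"of, and then feed this into Proposition \ref{pLindel}(ii).

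The one genuine step is the observation that the Lindel\"of property passes to coarser topologies. Since every $\chi\in E'$ is continuous for the original lcs topology of $E$, the weak topology $\sigma(E,E')$ is coarser than that topology; hence every $\sigma(E,E')$-open cover of $E$ is a fortiori an open cover for the original topology. As $E$ is assumed Lindel\"of in its original lcs topology, such a cover admits a countable subcover, so $(E,\sigma(E,E'))$ is Lindel\"of as well. In the special case of a separable metrizable lcs this is automatic, since separable metrizable spaces are Lindel\"of.

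Putting these together: assuming $E$ is a weakly $\aleph$-space, i.e. $(E,\sigma(E,E'))$ is an $\aleph$-space, the previous paragraph shows it is moreover Lindel\"of, whence Proposition \ref{pLindel}(ii) gives that $(E,\sigma(E,E'))$ is an $\aleph_0$-space, i.e. $E$ is a weakly $\aleph_0$-space; the converse is the trivial implication. I do not expect any real obstacle here, the whole argument resting on Proposition \ref{pLindel}(ii) together with the elementary permanence of Lindel\"ofness under passage to a coarser topology. The only point worth stating carefully is that the hypothesis ``$E$ is Lindel\"of'' refers to the original lcs topology whereas the conclusion concerns the weak topology, and the bridge between them is precisely the coarseness of $\sigma(E,E')$.
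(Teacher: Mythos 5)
Your argument is correct and is exactly the intended derivation: the paper states Corollary \ref{final} as an immediate consequence of Proposition \ref{pLindel}(ii), relying precisely on the facts that Lindel\"ofness passes from the original lcs topology to the coarser weak topology and that every $\aleph_0$-space is an $\aleph$-space. Nothing is missing.
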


Since every $WCG$ Banach  space is Lindel\"{o}f in its weak topology by Preiss-Talagrand's theorem
(see \cite[Theorem 12.35]{fabian}), we note also
\begin{corollary}\label{tal}
Every $WCG$ Banach space is a weakly $\aleph$-space if and only if it is a weakly $\aleph_{0}$-space.
\end{corollary}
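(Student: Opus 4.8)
The plan is to deduce this corollary directly from Proposition~\ref{pLindel}(ii) together with the weak Lindel\"of property of $WCG$ Banach spaces, so that the statement becomes essentially a one-line consequence of machinery already in place. First I would set $X:=(E,\sigma(E,E'))$ and invoke the Preiss--Talagrand theorem \cite[Theorem 12.35]{fabian}, recalled in the remark immediately preceding the corollary, to conclude that $X$ is a Lindel\"of space for every $WCG$ Banach space $E$. This is the only external input required.

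With $X$ known to be Lindel\"of, I would then apply Proposition~\ref{pLindel}(ii), which asserts that a topological space is an $\aleph_0$-space if and only if it is a Lindel\"of $\aleph$-space. Since $X$ is Lindel\"of, the Lindel\"of hypothesis is automatic, and the proposition collapses to the equivalence ``$X$ is an $\aleph_0$-space $\iff$ $X$ is an $\aleph$-space.'' Unraveling the definitions of weakly $\aleph_0$-space and weakly $\aleph$-space, this is precisely the assertion that $E$ is a weakly $\aleph$-space if and only if $E$ is a weakly $\aleph_0$-space, which completes the argument.

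The one subtlety I would take care to flag, rather than a genuine obstacle, is that one cannot simply quote Corollary~\ref{final} here: a $WCG$ Banach space need not be separable, hence need not be Lindel\"of in its \emph{norm} topology, so the Lindel\"of hypothesis of Corollary~\ref{final} (phrased for the lcs topology of $E$) may fail. What rescues the argument is that only the \emph{weak} Lindel\"of property of $X$ is actually used in Proposition~\ref{pLindel}(ii), and this is exactly what Preiss--Talagrand supplies. Thus the entire content of the corollary is carried by that proposition (already established) and by the external weak Lindel\"ofness of $WCG$ spaces, with no additional calculation needed.
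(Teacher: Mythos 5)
Your proof is correct and follows essentially the same route as the paper: the authors likewise invoke the Preiss--Talagrand theorem to get that a $WCG$ Banach space is Lindel\"of in its weak topology and then apply Proposition~\ref{pLindel}(ii) (the content behind Corollary~\ref{final}). Your added remark that Corollary~\ref{final} cannot be quoted verbatim for nonseparable $WCG$ spaces, and that only weak Lindel\"ofness is needed, is a fair clarification but does not change the argument.
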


As $C_c(X)$ is Lindel\"{o}f for any $\aleph_0$-space $X$ by \cite[Proposition 10.3]{Mich}, we obtain

\begin{corollary} \label{mi} Let $X$ be an $\aleph_0$-space. Then $C_c(X)$ is a weakly $\aleph$-space if and only it is a weakly $\aleph_0$-space.
\end{corollary}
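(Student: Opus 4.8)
The plan is to reduce Corollary~\ref{mi} to the already established Corollary~\ref{final}, whose hypothesis requires only that the underlying space be a Lindel\"of lcs. The sole point needing verification is therefore that $E:=C_c(X)$ is a Lindel\"of space when $X$ is an $\aleph_0$-space; once this is in hand, the equivalence follows with no further work.

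First I would dispose of the trivial implication. If $E$ is a weakly $\aleph_0$-space, then $(E,\sigma(E,E'))$ carries a countable $k$-network, and any countable family is automatically $\sigma$-locally finite (write it as a countable union of singleton families, each of which is locally finite); hence $E$ is a weakly $\aleph$-space. Thus the content of the corollary lies entirely in the converse direction.

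For the converse, the key input is Michael's theorem \cite{Mich}: for an $\aleph_0$-space $X$ the function space $C_c(X)$ is again an $\aleph_0$-space in the compact-open topology, and in particular it is Lindel\"of (this is precisely \cite[Proposition~10.3]{Mich}). Since the weak topology $\sigma(E,E')$ is coarser than the compact-open topology, and Lindel\"ofness is inherited by any coarser topology, the space $(E,\sigma(E,E'))$ is Lindel\"of as well. Consequently $E$ is a Lindel\"of lcs, and Corollary~\ref{final} applies verbatim to yield that $E$ is a weakly $\aleph$-space if and only if it is a weakly $\aleph_0$-space. Unwinding Corollary~\ref{final}, the underlying mechanism is Proposition~\ref{pLindel}(ii): the weakly $\aleph$-space $(E,\sigma(E,E'))$, being Lindel\"of, is a Lindel\"of $\aleph$-space and hence an $\aleph_0$-space.

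The main---indeed the only nontrivial---obstacle is the Lindel\"of property of $C_c(X)$, which is furnished by Michael's result; everything else is a direct citation of earlier statements. I note that because Lindel\"ofness in the compact-open topology already delivers weak Lindel\"ofness, one could equally bypass Corollary~\ref{final} and invoke Proposition~\ref{pLindel}(ii) directly on $(E,\sigma(E,E'))$, which makes the argument essentially a one-line composition of known facts.
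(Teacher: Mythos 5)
Your argument is correct and is precisely the paper's proof: the authors also deduce Corollary~\ref{mi} from Corollary~\ref{final} (equivalently Proposition~\ref{pLindel}(ii)) by citing \cite[Proposition~10.3]{Mich} for the Lindel\"ofness of $C_c(X)$ over an $\aleph_0$-space $X$, the weak topology being coarser. Nothing further is needed.
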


\section{Proof of Theorem \ref{tMain}}

We need the following  lemmas.
\begin{lemma} \label{l-Normal}
Let $X$ be a completely regular space containing a non-scattered compact subset $K$. Then $C_c(X)$ is not a weakly $\aleph$-space.
\end{lemma}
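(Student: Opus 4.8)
The plan is to locate inside $\bigl(C_c(X),\sigma(C_c(X),C_c(X)')\bigr)$ a subspace that is weakly homeomorphic to $C([0,1])$ with its weak topology, and then derive a contradiction from Corson's theorem \cite{Mich} together with Corollary~\ref{final}. Since being an $\aleph$-space is hereditary (a $\sigma$-locally finite $k$-network restricts to one on any subspace), producing such a subspace will suffice.

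\textbf{Step 1 (a surjection onto $[0,1]$).} First I would record the classical fact that a non-scattered compact Hausdorff space maps continuously onto $[0,1]$: the perfect kernel of $K$ is a nonempty perfect compact set, in which a Cantor scheme of open sets with pairwise disjoint closures yields a compact subset mapping continuously onto $\cantor$, hence onto $[0,1]$; a Tietze extension over the (normal) space $K$ then gives a continuous surjection $\phi\colon K\to[0,1]$. Since $K$ is compact in the Tychonoff space $X$, it is $C^{*}$-embedded in $X$ (via $X\hookrightarrow\beta X$ and Tietze in $\beta X$), so the bounded map $\phi$ extends to a continuous $\Phi\colon X\to[0,1]$ with $\Phi|_{K}=\phi$ (extend into $\RR$ and compose with a retraction of $\RR$ onto $[0,1]$).

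\textbf{Step 2 (the embedding).} Consider the restriction $R\colon C_c(X)\to C(K)$, $R(g)=g|_{K}$, together with $S\colon C([0,1])\to C_c(X)$, $S(f)=f\circ\Phi$, and $T\colon C([0,1])\to C(K)$, $T(f)=f\circ\phi$. All three are linear and continuous for the original topologies: $R$ for the compact-open topologies, and since $\|S(f)\|_\infty=\|f\|_\infty$ and $\|T(f)\|_\infty=\|f\|_\infty$ (because $\phi$ is onto $[0,1]$), the map $S$ is isometric into the sup-norm and hence continuous into the coarser compact-open topology, while $T$ is an isometric embedding of Banach spaces. Consequently all three are continuous for the corresponding weak topologies. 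As $R\circ S=T$, the map $S$ is injective, and on $M:=S\bigl(C([0,1])\bigr)$ its inverse is $S^{-1}=T^{-1}\circ(R|_{M})$, where $R(M)\subseteq T\bigl(C([0,1])\bigr)$. Being an isomorphic embedding, $T$ is a weak-to-weak homeomorphism onto its range, so $T^{-1}$ is weak-weak continuous; together with weak-weak continuity of $R|_{M}$ this makes $S^{-1}$ weak-weak continuous. Hence $S$ is a homeomorphism from $C([0,1])$ with its weak topology onto $M$ equipped with the topology induced from $\bigl(C_c(X),\sigma(C_c(X),C_c(X)')\bigr)$.

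\textbf{Step 3 (conclusion).} If $C_c(X)$ were a weakly $\aleph$-space, then the subspace $M$ would be an $\aleph$-space by heredity, and therefore $C([0,1])$ would be a weakly $\aleph$-space. But $C([0,1])$ is a separable metrizable lcs, so by Corollary~\ref{final} it would then be a weakly $\aleph_{0}$-space; by Corson's theorem \cite{Mich} this forces $[0,1]$ to be countable, a contradiction. Thus $C_c(X)$ is not a weakly $\aleph$-space. The delicate point is Step~2: one must ensure that the topology $M$ inherits from the \emph{global} weak topology of $C_c(X)$ matches, through $S$, the genuine weak topology of $C([0,1])$. Continuity of $S$ is automatic, but continuity of $S^{-1}$ is not clear a priori, and it is precisely the factorization $R\circ S=T$ through the isomorphic embedding $T$ that supplies it.
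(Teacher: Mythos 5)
Your proposal is correct and follows essentially the same route as the paper: extract a continuous surjection of $K$ onto $[0,1]$, extend it over $X$ by Tietze--Urysohn, use the adjoint $f\mapsto f\circ\Phi$ to embed $C[0,1]$ weakly into $C_c(X)$, and then combine heredity of $\aleph$-spaces with the Lindel\"of coincidence of $\aleph$- and $\aleph_0$-spaces and Corson's theorem. The only (harmless) divergence is technical: you justify that the embedding is weak-to-weak by factoring through the restriction map to $C(K)$, whereas the paper gets the embedding from the compact-covering property of the extension $\tilde f$.
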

\begin{proof}
Suppose for a contradiction that $C_c(X)$ is  a weakly $\aleph$-space. As $K$ is not scattered, there exists a continuous map $f$ from $K$ onto the interval $[0,1]$ (see \cite[Theorem 8.5.4]{sema}).
In particular, every compact subset of $[0,1]$ is the image $f(L)$ for some compact set $L$ in $X$.
By the Tietze-Urysohn theorem (which holds for compact subsets of completely regular spaces, knowing that they have normal compactifications), $f$ has an extension ${\tilde f}: X\to [0,1]$. Clearly, ${\tilde f}$ is also compact-covering, therefore the adjoint map $h \mapsto h\circ{\tilde f}$ is an embedding of $C[0,1]$ into $C_c(X)$.
Finally, if $C[0,1]$ were an $\aleph$-space in the weak topology, then by Corollary~\ref{mi} it would be an $\aleph_0$-space, which leads to a contradiction with the following result of Corson (see~\cite[Prop. 10.8]{Mich}): A space of the form $C(K)$ with $K$ compact is an $\aleph_0$-space in the weak topology if and only if $K$ is countable.
\end{proof}

For example, as $X=\NN^\NN$ has a non-scattered compact subset, the space $C_c(X)$ is an $\aleph_0$-space \cite{Mich}, but $C_c(X)$ is not  a weakly $\aleph_0$-space by Lemma \ref{l-Normal}.
Observe that  the condition on $X$ to have only scattered (even countable) compact subsets is not enough for $C_c(X)$ to be a weakly $\aleph$-space. This follows from the following
\begin{lemma} \label{2-Normal}
Let $X$ be a Tychonoff space such that each compact subset of $X$ is countable. If $C_c(X)$ is a weakly $\aleph$-space, then $X$ is separable. In particular, the space $C_c[0,\omega_1)$ is not a weakly $\aleph$-space.
\end{lemma}
\begin{proof}
By Proposition \ref{p-Nec}, there is a sequence $\{ K_n\}_{n\in\NN}$ of (countable) compact subsets of $X$ and a sequence $\{ \delta_n \}_{n\in\NN}$ of positive numbers such that
\begin{equation} \label{e-2}
\bigcap_{n\in\NN} \{ f\in C_c(X): \; f(K_n)\subseteq [-\delta_n, \delta_n] \} =\{ 0\}.
\end{equation}
Set $A:=\cup_{n\in\NN} K_n$. Then $A$ is countable. We show that $A$ is dense in $X$. Indeed, if $X\setminus \mathrm{cl}_X (A) \not= \emptyset$, we can find $h\not= 0$ such that $h(\mathrm{cl}_X (A))=\{ 0\}$, that contradicts (\ref{e-2}).
The last assertion follows from the fact that $[0,\omega_1)$  is a non-separable locally compact normal space  (see \cite[3.1.27]{Eng}).
\end{proof}


\begin{lemma} \label{3-Normal}
Let $X$ be a completely regular space. Then the following assertions are equivalent:
\begin{enumerate}
\item[{\rm (i)}]  $X$ contains a non-scattered compact subset.
\item[{\rm (ii)}] $C_c(X)$ contains a copy of $\ell_1$.
\item[{\rm (iii)}] $C_c(X)$ contains a separable Banach space $B$ with non-separable dual.
\end{enumerate}
So, every compact subset of $X$ is scattered if and only if $C_c(X)$ does not contain a copy of $\ell_1$.
\end{lemma}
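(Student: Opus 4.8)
The plan is to prove Lemma~\ref{3-Normal} by establishing the cycle of implications (i)$\Rightarrow$(ii)$\Rightarrow$(iii)$\Rightarrow$(i), which is the most economical route since each arrow is a short step. The content of the lemma connects a topological property of $X$ (existence of a non-scattered compact subset) with two Banach-space-theoretic properties of $C_c(X)$, and the key device linking them is the classical fact that a non-scattered compact space admits a continuous surjection onto $[0,1]$.

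For (i)$\Rightarrow$(ii), I would reuse the construction already appearing in the proof of Lemma~\ref{l-Normal}. If $K \subset X$ is a non-scattered compact set, then by \cite[Theorem 8.5.4]{sema} there is a continuous surjection $f\colon K \to [0,1]$, which extends by the Tietze–Urysohn theorem to $\tilde f\colon X \to [0,1]$; the adjoint $h \mapsto h\circ \tilde f$ embeds $C[0,1]$ isometrically into $C_c(X)$. Since $C[0,1]$ is a separable Banach space whose dual $M[0,1]$ is nonseparable, this already hands me (iii) directly, not merely (ii). To obtain (ii), I invoke the classical theorem that $C(K)$ contains an isomorphic copy of $\ell_1$ whenever $K$ is an uncountable compact metric space (equivalently, $C[0,1] \supset \ell_1$ by a standard Rademacher-type or Rosenthal-type argument); composing with the embedding above places a copy of $\ell_1$ inside $C_c(X)$. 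Thus both (ii) and (iii) follow from (i) through the same embedding.

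The implication (iii)$\Rightarrow$(i) is where I would concentrate the real argument, and I expect it to be the main obstacle. I would argue contrapositively: assume every compact subset of $X$ is scattered, and show $C_c(X)$ cannot contain a separable Banach space with nonseparable dual. The natural tool is that for scattered compact $K$, the space $C(K)$ is an Asplund space (equivalently, $C(K)^*$ is weakly compactly generated and every separable subspace has separable dual); more generally, if all compact subsets of $X$ are scattered, then $C_c(X)$ should be an Asplund-type space in the sense that each of its separable Banach subspaces has separable dual. The delicate point is that $C_c(X)$ is typically not itself a Banach space, so I must pass to an arbitrary separable Banach subspace $B \hookrightarrow C_c(X)$ and control its dual; here one uses that the topology of $B$ is governed by countably many compact sets of $X$, each scattered, and that restriction to the union of these compacta embeds $B$ into a countable product or inductive limit of Asplund $C(K)$-spaces, forcing $B^*$ to be separable. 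For (ii)$\Rightarrow$(iii), the step is immediate once I recall that $\ell_1$ is itself a separable Banach space with nonseparable dual $\ell_\infty$, so any copy of $\ell_1$ witnesses (iii) with $B = \ell_1$.

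The hard part will be making the Asplund/scattered-compact machinery work uniformly across all the compacta that can arise inside a separable Banach subspace of the non-normable space $C_c(X)$, and in particular verifying that scatteredness of every individual compact subset of $X$ suffices to rule out a nonseparable-dual subspace. I would lean on the structure theory of $C(K)$ for scattered $K$ (Rudin's theorem that $C(K)^*$ is then an $\ell_1$-sum of scalars, hence every separable subspace of $C(K)$ is Asplund) and transfer this through the compact-open topology; the technical care lies in the passage from the global space $C_c(X)$ to its separable Banach subspaces and the identification of the relevant compacta. Once this implication is secured, the cycle closes and the final ``if and only if'' assertion about copies of $\ell_1$ is precisely the equivalence (i)$\Leftrightarrow$(ii).
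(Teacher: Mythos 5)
Your implications (i)$\Rightarrow$(ii)$\Rightarrow$(iii) follow the paper's route exactly: the adjoint of the Tietze--Urysohn extension of a surjection $K\to[0,1]$ embeds $C[0,1]$ into $C_c(X)$, $C[0,1]$ contains $\ell_1$, and $\ell_1$ itself (being separable with dual $\ell_\infty$) witnesses (iii). The problem is (iii)$\Rightarrow$(i), which you rightly flag as the main obstacle but do not actually close, and the device you propose for closing it would not work as stated. The essential missing step is the reduction of an arbitrary Banach subspace $B$ of the (generally non-normable) space $C_c(X)$ to a subspace of $C(K)$ for a \emph{single} scattered compactum $K$, or a finite product of such. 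The paper obtains this from the Diestel--Morris--Saxon theorem: viewing $C_c(X)$ inside $\prod_{K\in\mathcal K}C_c(K)$, a Banach space topologically embedded in such a product must embed into a finite subproduct $C(K_{i_1})\times\dots\times C(K_{i_k})=C(K_{i_1}\oplus\dots\oplus K_{i_k})$, and the latter is $C$ of a scattered compact space, hence Asplund by \cite[Theorem 12.29]{fabian}, so it cannot contain a separable $B$ with non-separable dual. Your substitute --- embedding $B$ into a ``countable product or inductive limit of Asplund $C(K)$-spaces'' and claiming this forces $B'$ to be separable --- begs the question: countable products and inductive limits of Asplund Banach spaces are not Banach spaces, and the Asplund property does not transfer to their Banach subspaces except via precisely the finite-subproduct reduction you omit.

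That reduction is not deep, but it must be stated: since $B$ carries the subspace topology of $C_c(X)$, its unit ball is a bounded neighborhood of zero and therefore contains a basic compact-open neighborhood $\{f:\sup_{K_0}|f|<\eps\}\cap B$ determined by finitely many (equivalently, one) compacta; the kernel of the corresponding restriction map meets $B$ in a bounded subspace, hence trivially, and the map is in fact an isomorphism of $B$ onto a subspace of $C(K_0)$ with $K_0$ scattered. Only after this step does the Asplund machinery (Rudin's description of $C(K)'$ for scattered $K$, or the citation to \cite{fabian}) apply and yield the contradiction. With that lemma supplied --- either by citing \cite[Theorem 4.1]{DMS} as the paper does or by the elementary argument above --- your outline becomes a complete proof and coincides with the paper's.
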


\begin{proof}
(i)$\Rightarrow$(ii) Assume that $X$ contains a non-scattered compact set $K$. As it was shown in the proof of Lemma \ref{l-Normal}, the space $C[0,1]$ embeds into $C_c(X)$. It remains to note that $C[0,1]$ contains $\ell_1$.

(ii)$\Rightarrow$(iii) is trivial. Let us prove  that
(iii)$\Rightarrow$(i) Suppose for a contradiction that every compact subset of $X$ is scattered. Denote by $\mathcal{K}$ the set of all compact subsets of $X$. Then  $C_c(X)$ can be treated as a subspace of the product $E:= \prod_{K\in \mathcal{K}} C_c(K)$. Then, by \cite[Theorem 4.1 and the first claim of the proof]{DMS}, the space $B$ is embedded in the finite product $F_{k}:=C(K_{i_{1}})\times\dots\times C(K_{i_{k}})$ for some $k\in\mathbb{N}$. On the other hand, since $F_{k}=C(K_{i_{1}}\oplus\dots\oplus K_{i_{k}})$ and the topological direct sum $K:=K_{i_{1}}\oplus\dots\oplus K_{i_{k}}$ is compact and scattered,
$F_k$ is Asplund by \cite[Theorem 12.29]{fabian} (i.e. every separable subspace of $C(K)$ has separable dual).  Thus $F_k$ does not contain $B$, a contradiction.
\end{proof}

We recall that the countable product of $\aleph_{0}$-spaces is an $\aleph_{0}$-space (see \cite{Mich}).  Now we are ready to prove the main theorem.
\begin{proof}[Proof of Theorem \ref{tMain}]
Let $(K_{n})_{n}$ be a fundamental sequence of compact sets in $X$. Then $C_{c}(X)$ is embedded in the product $\prod_{n}C(K_{n})$.

If $X$ is countable, each space $K_{n}$ is metrizable and scattered, so  $C_{c}(X)$ is a weakly $\aleph_0$-space by Proposition \ref{pro}(i). Thus $C_c(X)$ is a weakly $\aleph$-space.

Assume that $C_c(X)$ is a weakly $\aleph$-space. By Lemma \ref{l-Normal}, $K_{n}$ is scattered for every $n\in\NN$.
We apply  Lemma \ref{3-Normal} to derive that $C_{c}(X)$ does not  contain a copy of $\ell_{1}$.
Now Corollary  \ref{quasi} says that $C_{c}(X)$ is a weakly $\aleph_{0}$-space.

Assume now that $C_c(X)$ is a weakly $\aleph_0$-space. Lemma \ref{l-Normal}  shows that every compact subset of $X$ is scattered.  Since $C_{c}(X)$ is separable,  $X$ admits a weaker metrizable topology; so all sets $K_{n}$ are metrizable and scattered. Thus each $K_{n}$ is countable, so it is the whole space $X$.
\end{proof}
Theorem \ref{tMain} combined with \cite{MarPol} provides concrete Banach spaces $C(K)$ which under the weak topology are $\sigma$-spaces but is not $\aleph$-spaces.
\begin{corollary}\label{mar}
Let $K$ be an uncountable separable compact space. If $K$ is

(1) a linearly ordered space, or

(2) a dyadic space,

then $C(K)$ endowed with the weak topology is a $\sigma$-space but not an $\aleph$-space.
If  additionally $K$ is metrizable, then  $C(K)$ endowed with the weak topology is a  cosmic space but is not an $\aleph$-space.
\end{corollary}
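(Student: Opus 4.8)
The plan is to separate the positive topological assertions from the negative one, which come from completely different sources. Since $K$ is compact, $C(K)=C_c(K)$ is a Banach space, in particular a Fr\'echet lcs, so Theorem~\ref{tMain} applies verbatim. As $K$ is uncountable, Theorem~\ref{tMain} immediately gives that $(C(K),\sigma(C(K),C(K)'))$ fails to be a weakly $\aleph$-space; this settles the negative half in every case, and uses neither the linear order, nor the dyadic structure, nor the metrizability of $K$. Write $(C(K),w)$ for $C(K)$ equipped with this weak topology.

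For the assertion that $(C(K),w)$ is a $\sigma$-space I would quote \cite{MarPol}. The relevant content there is, for a separable linearly ordered compactum and for a separable dyadic compactum $K$, the explicit construction of a $\sigma$-discrete (hence $\sigma$-locally finite) network for $C(K)$ in its weak topology; since $(C(K),w)$ is a locally convex space and therefore regular, such a network is exactly what is required for $(C(K),w)$ to be a $\sigma$-space. I expect this to be the one genuinely hard ingredient, and it is the step I would import wholesale. The subtle point, and the reason the special structure of $K$ is needed, is that the weak topology is strictly finer than the pointwise topology, so one cannot merely reuse a network built for $C_p(K)$: it is precisely the order-theoretic, respectively dyadic, geometry of $K$ that makes a $\sigma$-discrete network available for the finer weak topology.

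It remains to upgrade \emph{$\sigma$-space} to \emph{cosmic} under the extra hypothesis that $K$ is metrizable, and here I would argue directly, without \cite{MarPol}. A metrizable compact $K$ has separable $C(K)$, so in the norm topology $C(K)$ is second countable and admits a countable base, which is in particular a countable network for the norm topology. Since the weak topology is coarser than the norm topology, every weakly open set is norm open, and therefore the very same countable family is a network for $(C(K),w)$ as well. As $(C(K),w)$ is regular, it is cosmic; combined with the failure of the $\aleph$-space property already obtained from Theorem~\ref{tMain}, this gives the final sentence. Thus the whole argument is formal except for the $\sigma$-space step, which is the technical heart of \cite{MarPol}, while the elementary network-transfer observation and Theorem~\ref{tMain} supply everything else.
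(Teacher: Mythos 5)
Your overall architecture is the same as the paper's: Theorem~\ref{tMain} kills the $\aleph$-space property because $K$ is uncountable, \cite{MarPol} supplies the $\sigma$-space claim, and the cosmic upgrade for metrizable $K$ is the elementary observation that a countable network for the norm topology is a countable network for the coarser weak topology (the paper cites \cite{Mich} for this, but your direct argument is the same in substance). The negative half and the cosmic half are correct as you state them.

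The one place where you have packaged the citation incorrectly, and where a small but real argument is missing, is the $\sigma$-space step. The lemmas of \cite{MarPol} that the paper invokes (Lemmas 2.3.1, 5.4, 5.5, 5.10) do not produce a $\sigma$-discrete network \emph{for the weak topology}; they produce a family that is $\sigma$-discrete in $(C(K),\tau_p)$, the \emph{pointwise} topology, and is a network for the norm topology. The paper then uses a sandwich argument: since $\tau_p\subset\sigma(C(K),C(K)')\subset$ norm topology, $\sigma$-discreteness transfers \emph{upward} from $\tau_p$ to the finer weak topology (more open sets make discreteness easier to witness), while the network property transfers \emph{downward} from the norm topology to the coarser weak topology. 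Your parenthetical remark locates the difficulty in the wrong place: the special order-theoretic or dyadic structure of $K$ is not needed to make the family a network for the weak topology (it is already a norm-network, which is stronger); it is needed to achieve $\sigma$-discreteness with respect to the very coarse topology $\tau_p$. Finally, in case (2) the construction in \cite{MarPol} is carried out for $C(2^{2^{\omega}})$, not for an arbitrary separable dyadic $K$; the paper handles general $K$ by noting that $K$ is a continuous image of $2^{2^{\omega}}$, so $C(K)$ embeds into $C(2^{2^{\omega}})$ for the weak topology, and subspaces of $\sigma$-spaces are $\sigma$-spaces. You should add both the sandwich transfer and this embedding reduction to make the proof complete.
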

\begin{proof} For the both cases, by Theorem \ref{tMain}, the space $C(K)$ with the weak topology is not an $\aleph$-space.

(1): Let $K$ be a compact space as assumed. By  \cite[Theorem 5.5]{MarPol} the space $(C(K),\tau_{p})$ is a $\sigma$-space, where $\tau_{p}$ is the pointwise topology on $C(K)$.  Moreover,  by Lemma 5.4 and Lemma 2.3.1 of \cite{MarPol} the  space  $C(K)$ admits a $\sigma$-discrete collection in $(C(K),\tau_{p})$ which is a network in $C(K)$. Hence $C(K)$ with the weak topology is a $\sigma$-space.

(2):  By \cite[Lemma 2.3.1 and Lemma 5.10]{MarPol} there exists a $\sigma$-discrete family in $(C(2^{2^{\omega}}),\tau_{p})$ which is a network in $C(2^{2^{\omega}})$. Hence the space $C(2^{2^{\omega}})$ endowed with the weak topology  is a $\sigma$-space. Since $K$ is a continuous image of $2^{2^{\omega}}$, the space $C(K)$ embeds into $C(2^{2^{\omega}})$ for the weak topology. This proves the general case.

Assume now that $K$ is metrizable. Then $C(K)$ is a Polish space. So $C(K)$ endowed with the weak topology is a  cosmic space by \cite{Mich} but is not  an $\aleph$-space by Theorem \ref{tMain}.
\end{proof}

\begin{remark} {\em
Let $K$ be a compact space. Then $C(K)$ is weakly cosmic if and only if $C(K)$ is separable if and only if $K$ is metrizable. So, if $K$ satisfies (1) or (2) of Corollary \ref{mar} but is not metrizable (for example, $K$ is the Bohr compactification of a countably infinite abelian group; in this case $K$ satisfies (2) but is not metrizable) we obtain a non-trivial example of Banach spaces $B$ such that $(B,\sigma(B,B'))$ is a $\sigma$-space but $(B,\sigma(B,B'))$ is neither cosmic nor an $\aleph$-space.}
\end{remark}

\begin{corollary}\label{fre}
Let $X$ be a locally compact and paracompact space. Then $C_{c}(X)$ is a weakly $\aleph$-space if and only if $X$ is countable.
\end{corollary}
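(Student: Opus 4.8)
The plan is to reduce to the Fr\'echet case already settled in Theorem~\ref{tMain} by exploiting the structure of locally compact paracompact spaces. The structural fact I would use is that such a space decomposes as a topological direct sum $X=\bigoplus_{i\in I}X_i$, where each $X_i$ is a nonempty open-and-closed, locally compact and $\sigma$-compact subspace (a standard structure theorem; see \cite{Eng}). Since a locally compact $\sigma$-compact space is hemicompact and every locally compact space is a $k_R$-space, each $C_c(X_i)$ is metrizable and complete, i.e.\ a Fr\'echet lcs. The decomposition of $X$ moreover gives a linear topological isomorphism $C_c(X)\cong\prod_{i\in I}C_c(X_i)$, whose dual is the locally convex direct sum $\bigoplus_{i\in I}C_c(X_i)'$; in particular every element of $C_c(X)'$ depends only on finitely many coordinates.

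For the ``if'' direction, suppose $X$ is countable. Then $X$ is Lindel\"of and locally compact, hence $\sigma$-compact and therefore hemicompact, and it is a $k_R$-space, so $C_c(X)$ is a Fr\'echet lcs. Theorem~\ref{tMain} then yields that $C_c(X)$ is even a weakly $\aleph_0$-space, and in particular a weakly $\aleph$-space.

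For the ``only if'' direction, assume $E:=C_c(X)$ is a weakly $\aleph$-space and write $X=\bigoplus_{i\in I}X_i$ as above. First I would check that each $X_i$ is countable: identifying $C_c(X_i)$ with the closed subspace of $E$ consisting of functions vanishing off $X_i$, its weak topology is the restriction of $\sigma(E,E')$ (by Hahn--Banach the functionals restrict correctly), so $(C_c(X_i),\sigma)$ is a subspace of a weakly $\aleph$-space and hence itself a weakly $\aleph$-space, the $\aleph$-space property being hereditary. As $C_c(X_i)$ is Fr\'echet, Theorem~\ref{tMain} forces $X_i$ to be countable. It then remains to prove that $I$ is countable, for then $X=\bigcup_{i\in I}X_i$ is a countable union of countable sets. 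Here I would invoke Proposition~\ref{p-Nec}(iii): being a weakly $\sigma$-space, $E$ has $(E',\sigma(E',E))$ separable. Let $D$ be a countable \weakstar-dense subset of $E'$. Since $E'=\bigoplus_{i\in I}C_c(X_i)'$, each $d\in D$ has finite support, so $I_0:=\bigcup_{d\in D}\operatorname{supp}(d)$ is countable. If $I\neq I_0$, pick $i\in I\setminus I_0$, a point $x_i\in X_i$, and the element $e_i\in\prod_jC_c(X_j)=E$ whose $i$-th coordinate is the constant function $1$ and whose remaining coordinates vanish. Then the evaluation $\delta_{x_i}\in C_c(X_i)'\subset E'$ satisfies $\delta_{x_i}(e_i)=1$, whereas $d(e_i)=0$ for every $d\in D$; hence the \weakstar-neighborhood $\{\phi\in E'\colon |\phi(e_i)-1|<1\}$ of $\delta_{x_i}$ is disjoint from $D$, contradicting density. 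Therefore $I=I_0$ is countable, and $X$ is countable.

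The main obstacle is precisely the passage from the Fr\'echet setting of Theorem~\ref{tMain} to the genuinely non-metrizable case: when $X$ is not hemicompact, $C_c(X)$ fails to be a Fr\'echet space and Theorem~\ref{tMain} cannot be applied to it directly. The decomposition into hemicompact summands isolates the metrizable part, and the crux is to control the possibly uncountable index set $I$. The finite-support description of the dual of the product, combined with the separability of the \weakstar-dual guaranteed by Proposition~\ref{p-Nec}, is what makes this bound work, and that is the step to state carefully.
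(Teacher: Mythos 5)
Your proof is correct and follows the same route as the paper: the same structure theorem $X=\bigoplus_{t}X_{t}$ with $X_t$ locally compact and $\sigma$-compact, the identification $C_c(X)=\prod_t C_c(X_t)$ with Fr\'echet factors, Theorem~\ref{tMain} applied to each (hereditary) factor, and Theorem~\ref{tMain} again for the converse. The only point of divergence is how the index set is shown to be countable: the paper invokes Proposition~\ref{p-Nec}(i) to get that all weakly compact subsets of $C_c(X)$ are metrizable (implicitly, an uncountable $T$ would embed a non-metrizable Cantor cube $\prod_t\{0,f_t\}\cong\{0,1\}^{T}$ as a weakly compact set), whereas you use Proposition~\ref{p-Nec}(iii) together with the finite-support description of the dual of a product to show a countable \weakstar-dense set can only ``see'' countably many coordinates. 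Both arguments are valid and of comparable length; yours has the mild advantage of being fully explicit where the paper's compactness argument is left to the reader, while the paper's has the advantage of not needing the duality $\bigl(\prod_t E_t\bigr)'=\bigoplus_t E_t'$.
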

\begin{proof}
By the assumption on $X$, there exists a family $\{X_{t}:t\in T\}$ of locally compact and $\sigma$-compact spaces such that $X:=\bigoplus_{t\in T}X_{t}$, see \cite[5.1.27]{Eng}. So $C_{c}(X)=\prod_{t\in T}C_{c}(X_{t})$ and each $C_{c}(X_{t})$ is a Fr\'{e}chet space.

Assume that $C_{c}(X)$ is a weakly $\aleph$-space. Then $C_{c}(X_{t})$ is a weakly $\aleph$-space and hence $X_t$ is countable  by Theorem \ref{tMain} for every $t\in T$. As any compact subset of $C_{c}(X)$ is metrizable by Proposition \ref{p-Nec}(i), the set $T$ is countable. Thus $X$ is countable. Conversely, if $X$ is countable, $C_{c}(X)$ is a Fr\'{e}chet space and  Theorem \ref{tMain} applies.
\end{proof}
Since $\ell_{\infty}=C(\beta\mathbb{N})$, Theorem \ref{tMain} provides
\begin{corollary}\label{ell}
A Banach space containing a copy of $\ell_{\infty}$ is not a weakly $\aleph$-space.
\end{corollary}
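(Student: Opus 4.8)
The plan is to reduce the statement to the single space $\ell_\infty$ and then invoke Theorem \ref{tMain}. Suppose, toward the desired conclusion, that $E$ is a Banach space containing a copy of $\ell_\infty$, i.e. a closed subspace $B\subseteq E$ topologically isomorphic to $\ell_\infty$. Since $\ell_\infty = C(\beta\mathbb{N})$ with $\beta\mathbb{N}$ compact, the space $C_c(\beta\mathbb{N})$ is the Banach (hence Fr\'echet) space $\ell_\infty$, and as $\beta\mathbb{N}$ is uncountable, Theorem \ref{tMain} tells us that $\ell_\infty$ endowed with its weak topology is \emph{not} an $\aleph$-space. The remaining task is to transport this failure from $B\cong\ell_\infty$ back up to $E$, which rests on two routine heredity facts that I would make explicit.

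First, for a closed subspace $B$ of a Banach space $E$ the trace $\sigma(E,E')|_B$ of the weak topology of $E$ agrees with the intrinsic weak topology $\sigma(B,B')$; this is immediate from the Hahn--Banach theorem, since the restriction map $E'\to B'$ is onto, so both topologies are generated by the same family of functionals. Consequently $(B,\sigma(E,E')|_B)$ is homeomorphic to $\ell_\infty$ with its weak topology. Second, the class of $\aleph$-spaces is hereditary: if $\Nn=\bigcup_{n\in\NN}\Nn_n$ is a $\sigma$-locally finite $k$-network for a space $X$ and $Y\subseteq X$, then $\{N\cap Y : N\in\Nn\}$ is a $\sigma$-locally finite $k$-network for $Y$. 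Local finiteness of each $\{N\cap Y : N\in\Nn_n\}$ follows by intersecting with $Y$ the neighborhoods witnessing local finiteness in $X$; the $k$-network condition is verified by writing a relatively open subset of $Y$ as $V\cap Y$ with $V$ open in $X$ and noting that a set compact in $Y$ is compact in $X$. (One may instead cite \cite{gruenhage} for the heredity.)

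Putting these together finishes the argument: if $(E,\sigma(E,E'))$ were a weakly $\aleph$-space, then its subspace $(B,\sigma(E,E')|_B)$ would be an $\aleph$-space, and by the first fact this space is precisely $\ell_\infty$ with its weak topology, contradicting Theorem \ref{tMain}. Hence $E$ is not a weakly $\aleph$-space. The argument has no genuine obstacle: it is a direct corollary of Theorem \ref{tMain}, and the only points demanding attention are the compatibility of the subspace weak topology with the intrinsic weak topology of $\ell_\infty$ and the heredity of the $\sigma$-locally finite $k$-network, both of which are standard.
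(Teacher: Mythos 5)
Your proof is correct and follows the same route as the paper, which simply notes that $\ell_\infty=C(\beta\mathbb{N})$ and invokes Theorem \ref{tMain}, leaving implicit the two heredity facts (that $\sigma(E,E')$ restricts to $\sigma(B,B')$ on a closed subspace $B$, and that $\aleph$-spaces are hereditary) which you spell out correctly. No gaps.
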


\begin{corollary}
Let $X$ be a locally compact Hausdorff space. Then the Banach space $C_{0}(X)$ of continuous functions on $X$ vanishing at infinity is a weakly $\aleph$-space if and only if $X$ is countable.
\end{corollary}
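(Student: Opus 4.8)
The plan is to compare $C_0(X)$ with the $C(K)$-spaces attached to the compact subsets $K\subseteq X$ and with its norm dual $M(X)=C_0(X)'$, the space of finite signed regular Borel measures under the total variation norm, and to funnel both implications through Theorem~\ref{tMain} (applied to compact spaces, where $C_c(K)=C(K)$ is a Banach space) and Corollary~\ref{c-Equiv}. Throughout I would use that for a closed subspace $Z$ of a Banach space $E$ the weak topology $\sigma(Z,Z')$ coincides with the topology induced from $(E,\sigma(E,E'))$, and that being a weakly $\aleph$- (resp. $\aleph_0$-) space passes to arbitrary subspaces.

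For the ``if'' part, assume $X$ is countable. Since $X$ is countable and Hausdorff, every finite regular Borel measure on $X$ is purely atomic, so $M(X)$ is isometric to $\ell_1(X)$, which is separable. Thus the Banach space $C_0(X)$ has separable strong dual, and Proposition~\ref{pro}(i) gives that $C_0(X)$ is a weakly $\aleph_0$-space, in particular a weakly $\aleph$-space.

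For the ``only if'' part, assume $C_0(X)$ is a weakly $\aleph$-space. First I would show that every compact $K\subseteq X$ is countable. Using local compactness, I would fix a relatively compact open $U\supseteq K$ and $\phi\in C_c(X)$ with $0\le\phi\le 1$, $\phi\equiv 1$ on $K$, and $\operatorname{supp}\phi\subseteq U$; then $g\mapsto \phi\cdot\tilde g$ (where $\tilde g$ is a Tietze extension of $g$ to $\overline U$, the product being extended by $0$ outside $U$) is an isomorphic embedding of the Banach space $C(K)$ into $C_0(X)$, a right inverse to the restriction map. Hence $(C(K),\mathrm{weak})$ is weakly homeomorphic to a subspace of $(C_0(X),\mathrm{weak})$ and is therefore itself a weakly $\aleph$-space; Theorem~\ref{tMain} then forces $K$ to be countable.

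Finally, every compact subset of $X$ being countable, it is scattered, whence $X$ and therefore its one-point compactification $\alpha X$ is scattered. Consequently $C(\alpha X)$ is an Asplund space by \cite[Theorem 12.29]{fabian}, and so is its subspace $C_0(X)$; in particular $C_0(X)$ contains no copy of $\ell_1$. Corollary~\ref{c-Equiv} now applies, and the weakly $\aleph$-space $C_0(X)$ has separable strong dual $M(X)$. But $\{\delta_x:x\in X\}$ is $2$-separated in $M(X)$, so its separability forces $X$ to be countable, completing the proof. The main obstacle is this forward direction: both the realization of each $C(K)$ as a weak subspace of $C_0(X)$ (so that Theorem~\ref{tMain} can be applied compact set by compact set) and the reduction, via scatteredness and \cite{fabian}, to the $\ell_1$-free case, where Corollary~\ref{c-Equiv} converts the $\aleph$-space property into norm separability of $M(X)$.
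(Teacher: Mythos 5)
Your ``if'' direction and the endgame of your ``only if'' direction are sound, but there is a genuine gap at the step where you claim that $g\mapsto\phi\cdot\tilde g$ is an isomorphic embedding of the Banach space $C(K)$ into $C_0(X)$, a right inverse to restriction. For this map to be a linear embedding (hence a weak--weak homeomorphism onto a subspace, which is what you need in order to pass the weakly-$\aleph$ property down to $C(K)$ and invoke Theorem \ref{tMain}) the Tietze extension $g\mapsto\tilde g$ must be chosen as a \emph{bounded linear} extension operator $C(K)\to C(\overline U)$, and such an operator does not exist for a general compact $K$ inside a compact Hausdorff space. Concretely, take $X=\beta\mathbb{N}$ (compact, hence locally compact) and $K=\beta\mathbb{N}\setminus\mathbb{N}$, with $U=X$ and $\phi\equiv 1$: a bounded linear extension operator $E\colon C(K)\to C(\beta\mathbb{N})=\ell_\infty$ would make $I-E\circ R$ (with $R$ the restriction map) a bounded projection of $\ell_\infty$ onto $\ker R=c_0$, contradicting Phillips' theorem. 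The gap is repairable within your own scheme: suppose some compact $K\subseteq X$ is non-scattered, take a continuous surjection $f\colon K\to[0,1]$, extend it to $\tilde f\colon\overline U\to[0,1]$ by Tietze, and embed $C[0,1]$ into $C_0(X)$ by $h\mapsto\phi\cdot(h\circ\tilde f)$ extended by $0$; this map \emph{is} linear, bounded, and bounded below (evaluate on $K$, where $\phi\equiv1$ and $f$ is onto), so Corollary \ref{mi} and Corson's theorem give a contradiction exactly as in Lemma \ref{l-Normal}. Knowing only that every compact subset of $X$ is scattered is all the rest of your argument needs (scatteredness of the one-point compactification, Asplundness of $C(\alpha X)$ and of its subspace $C_0(X)$, absence of $\ell_1$, Corollary \ref{c-Equiv}, and the $2$-separated family of Dirac measures).

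For comparison, the paper's proof is a one-liner: with $K=\alpha X$ the one-point compactification, $C(K)=C_0(X)\oplus\mathbb{R}$, so $C_0(X)$ is a weakly $\aleph$-space if and only if the Banach space $C(K)=C_c(K)$ is, and Theorem \ref{tMain} applies directly; $K$ is countable if and only if $X$ is. This bypasses the compact-by-compact analysis, the Asplund/\,$\ell_1$ reduction, and the identification of $C_0(X)'$ with $M(X)$ altogether, though your longer route (once the embedding step is fixed as above) is also valid and makes the role of Corollary \ref{c-Equiv} explicit.
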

\begin{proof}
Let $K$ be the  one-point compactification of $X$. Then $C(K) = C_{0}(X) \oplus \RR$, therefore Theorem \ref{tMain} applies.
\end{proof}

\begin{question}\label{qu}
Let $X$ be a Tychonoff space. Is it true that $X$ is countable provided that $C_{c}(X)$ is a weakly $\aleph$-space?
\end{question}


\section{$C_c(X)$ over a countable $\aleph_0$-space $X$ for the weak topology} \label{sCount}

This section is motivated by the previous one; especially by Question  \ref{qu}. Let $\mu$ be a ($\sigma$-additive real-valued regular) measure on a Tychonoff space $X$. The variation and the norm of $\mu$ are denoted by $|\mu|$ and  $\| \mu\|$ respectively.
We shall use the following well known fact (see for example \cite[7.6.5]{jarchow}).
\begin{fact} \label{f1}
Let $X$ be a Tychonoff space. Then
\begin{enumerate}
\item[{\rm (i)}] $(C_c(X))'$ can be identified with the space of all measures on $X$ with compact support.
\item[{\rm (ii)}] $(C_p(X))'$  can be identified with the space of measures with finite support in $X$.
\end{enumerate}
\end{fact}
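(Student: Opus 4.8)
The plan is to analyze the continuous linear functionals directly from the definition of each topology, reducing both parts to a classical representation theorem.

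For part (i), I would begin from the fact that the compact-open topology on $C(X)$ has a neighborhood base at $0$ consisting of the sets $\{f : \sup_{t\in K}|f(t)| < \eps\}$ with $K\subseteq X$ compact and $\eps>0$. A linear functional $\phi$ is continuous precisely when it is bounded on one such neighborhood, equivalently when $|\phi(f)|\le C\,\|f\|_K$ for some compact $K$ and constant $C$, where $\|f\|_K=\sup_{t\in K}|f(t)|$. Thus $\phi$ factors through the seminorm $\|\cdot\|_K$, that is, through the restriction map $\rho_K\colon C(X)\to C(K)$, $\rho_K(f)=f\restriction K$.

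The key step is to observe that $\rho_K$ is surjective. Since $X$ is Tychonoff it embeds in the compact, hence normal, space $\beta X$, in which the compact set $K$ is closed, so the Tietze--Urysohn theorem extends every $g\in C(K)$ to $\beta X$ and therefore to $X$; this is exactly the extension argument already used in Lemma~\ref{l-Normal}. Consequently $\phi$ induces a well-defined continuous functional $\tilde\phi$ on $(C(K),\|\cdot\|_K)$ via $\tilde\phi(\rho_K f)=\phi(f)$, and the Riesz representation theorem yields a regular Borel measure $\mu$ on $K$ with $\tilde\phi(g)=\int_K g\,d\mu$. Extending $\mu$ by zero outside $K$ gives a regular measure on $X$ with compact support contained in $K$ satisfying $\phi(f)=\int_X f\,d\mu$. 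The converse, that every compactly supported measure defines a compact-open-continuous functional, is immediate, since $\big|\int_X f\,d\mu\big|\le \|\mu\|\,\|f\|_{\operatorname{supp}\mu}$; and since the measure on $X$ produced does not depend on the auxiliary $K$, the correspondence is a bijection. For part (ii) the argument is formally the same with finite sets in place of compact sets: the pointwise topology has a neighborhood base at $0$ given by $\{f : |f(x_i)|<\eps,\ i=1,\dots,n\}$ for finite $\{x_1,\dots,x_n\}\subseteq X$, continuity forces $|\phi(f)|\le C\max_i|f(x_i)|$, so $\phi$ factors through the finite-dimensional evaluation map $f\mapsto (f(x_1),\dots,f(x_n))$, whence $\phi(f)=\sum_{i=1}^n a_i f(x_i)$ for scalars $a_i$, which is precisely a measure supported on $\{x_1,\dots,x_n\}$.

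The main obstacle is the surjectivity of the restriction map $\rho_K$ in part (i); everything else is routine once that is in hand. The one genuine point to verify is that compact subsets of a merely Tychonoff (not necessarily normal) space still admit Tietze extensions, which is secured by passing to the normal compactification $\beta X$ as above, exactly as in Lemma~\ref{l-Normal}.
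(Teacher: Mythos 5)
Your proof is correct, but there is nothing in the paper to compare it against: the authors state this as a ``well known fact'' and give no argument, only the citation to Jarchow \cite[7.6.5]{jarchow}. What you have written is the standard proof of that cited result: continuity of $\phi$ in a locally convex topology means boundedness on a basic neighborhood, hence domination by a single seminorm $\|\cdot\|_K$ (resp.\ $\max_i|f(x_i)|$), so $\phi$ factors through the restriction map onto $C(K)$ (resp.\ the evaluation map onto $\RR^n$), and Riesz representation (resp.\ linear algebra) finishes the job. The two points you rightly flag as the only nontrivial ones --- surjectivity of $\rho_K$ via Tietze--Urysohn in $\beta X$, and surjectivity of the finite evaluation map, both resting on complete regularity --- are handled correctly. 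The only step you wave at rather than prove is that the correspondence is injective, i.e.\ that a compactly supported regular measure $\mu$ with $\int_X f\,d\mu=0$ for all $f\in C(X)$ must vanish; this again follows from surjectivity of $\rho_{K}$ for $K=\operatorname{supp}\mu$ together with the uniqueness clause in the Riesz theorem, and is worth one sentence if you want the ``can be identified with'' to be a genuine bijection.
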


If $X$ is  a countable $\aleph_0$-space,  the space $C(X)$ is an $\aleph_0$-space  both in the compact-open and the pointwise topology (see \cite{Mich}). Next theorem  shows that the same holds also for the weak topology on $C_c(X)$.
\begin{theorem}\label{weak}
If $X$ is a countable  $\aleph_0$-space, then $C_c(X)$ is a weakly $\aleph_0$-space.
\end{theorem}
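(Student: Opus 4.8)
The plan is to realize $(C(X),\sigma(C(X),C(X)'))$ as a Lindel\"of weakly $\aleph$-space and then invoke Proposition \ref{pLindel}(ii). Write $E=C_c(X)$, let $w=\sigma(E,E')$ be the weak topology, and let $\tau_p,\tau_c$ denote the pointwise and compact-open topologies on $C(X)$. By Fact \ref{f1}(i) the dual $E'$ consists of the measures with compact support; since $X$ is countable and every compact subset of the $\aleph_0$-space $X$ is a countable metrizable compactum, each such measure is purely atomic, $\mu=\sum_{x}\mu(\{x\})\delta_x$ with $\sum_x|\mu(\{x\})|<\infty$ and relatively compact support. As the point evaluations lie in $E'$, we have $\tau_p\le w\le\tau_c$, where $C_p(X)=(C(X),\tau_p)$ is separable and metrizable (because $X$ is countable). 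Since $(E,\tau_c)$ is an $\aleph_0$-space, any countable network for $\tau_c$ is a network for the coarser topology $w$; hence $(E,w)$ is cosmic and, by Proposition \ref{pLindel}(i), Lindel\"of. Consequently it suffices to produce a $\sigma$-locally finite $k$-network for $(E,w)$, as a Lindel\"of $\aleph$-space is an $\aleph_0$-space by Proposition \ref{pLindel}(ii).

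The technical heart is the following analogue of the Schur property (Remark \ref{rem:Schur}) that drove the $\ell_1(\Gamma)$ argument: \emph{on every $\tau_c$-bounded set $B\subset C(X)$ the weak topology coincides with the pointwise topology}, i.e. $w|_B=\tau_p|_B$. Indeed $\tau_p\le w$ always, while for a subbasic weak neighborhood determined by some $\mu\in E'$ with support $L$ one splits $\int(g-f)\,d\mu$ over a large finite $F\subset L$ (controlled pointwise) and its tail (controlled by $\sup_{g\in B}\sup_{x\in L}|g(x)|<\infty$ together with $\sum_{x\notin F}|\mu(\{x\})|$ small); this is exactly a dominated-convergence estimate. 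Since weakly compact sets are weakly bounded, hence $\tau_c$-bounded by Mackey's theorem, every $w$-compact set is a $\tau_p$-compact, $\tau_c$-bounded set, and in particular is metrizable, being a compact subspace of the separable metric space $C_p(X)$. Thus the $w$-compacta are precisely the bounded pointwise-compact sets, and near them $w$ is as tame as $\tau_p$.

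With this lemma in hand I would mimic the construction in the proof of Theorem \ref{ThmElloneAlephCont}. Using Lemma \ref{Lmebrwefeox}, fix a base $\BB=\bigcup_n\BB_n$ of the separable metric space $C_p(X)$ with each $\BB_n$ uniformly discrete for a fixed metric of $\tau_p$; since every $\tau_p$-open set is $w$-open and every $\tau_p$-discrete family is $w$-discrete, $\BB$ is a $\sigma$-discrete family of $w$-open sets. These sets alone do not form a $k$-network for $w$ (a member of $\BB$ need not lie inside a given $w$-open $U$, only its trace on a bounded set does), so I slice them by \emph{bounded shells}. Playing the role of the norm-shells $M(r,\eps)$ and $L(i,n)$ of the $\ell_1$-argument, the shells here are cut out by the seminorms $q_L(f)=\sup_{x\in L}|f(x)|$ for compact $L\subset X$, while Lemma \ref{Lmerbgui3}, applied in the relevant $C(L)$, supplies the countably many weakly closed pieces needed to capture the ``$q_L>r$'' part and hence to stay inside $U$. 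On a thin shell one has $w=\tau_p$ by the lemma above, so each point has a $w$-neighborhood of small $\tau_p$-diameter meeting at most one member of the uniformly discrete $\BB_n$; this is what makes each layer $w$-discrete, exactly as the diameter estimate of Lemma \ref{Lmsgribwri} did for $\ell_1$. Intersecting the members of $\BB$ with these shells then yields a $\sigma$-locally finite $k$-network for $(E,w)$, and the Lindel\"of reduction of the first paragraph finishes the proof.

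The main obstacle is the bookkeeping of the shells. For $\ell_1(\Gamma)$ there is a single norm, so countably many shells $M((i+2)/10k_n,1/5k_n)$ suffice; here boundedness in $\tau_c$ is governed by the whole family $\{q_L\}$, and $C_c(X)$ need not be hemicompact (e.g. $X=\Q$), so there is no countable cofinal family of compacta in $X$ and no countable family of bounded sets capturing all $w$-compacta. The key point is that one does \emph{not} need countably many shells: it is enough that they be organized into countably many locally finite families, which is where the countable $k$-network of $X$ is used to index the compacta $L$ and where the pointwise-compactness of $w$-compact sets keeps the slicing locally finite. Having obtained a $\sigma$-locally finite (rather than outright countable) $k$-network, the Lindel\"ofness of $(E,w)$ upgrades it to a countable one via Proposition \ref{pLindel}(ii), giving that $C_c(X)$ is a weakly $\aleph_0$-space.
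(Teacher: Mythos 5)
Your overall strategy is reasonable and your preparatory steps are sound: the reduction to a $\sigma$-locally finite $k$-network via cosmicity, Lindel\"ofness and Proposition \ref{pLindel}(ii) is correct (it is the paper's Corollary \ref{mi}), and your ``Schur analogue'' --- that the weak and pointwise topologies agree on every $\tau_c$-bounded set, by an atomic-measure tail estimate --- is true and is morally the same estimate that appears in Step 2 of the paper's proof. But the construction you then sketch does not go through as described, for two reasons. First, the shell mechanism imported from Theorem \ref{ThmElloneAlephCont} does not transfer: Lemma \ref{Lmsgribwri} rests entirely on the additivity $\|v\|=\|p_F(v)\|+\|p_A(v)\|$ of the $\ell_1$-norm (membership in a thin shell localizes the mass on a finite set), and the sup-seminorms $q_L(f)=\sup_{x\in L}|f(x)|$ have no such property; moreover a shell cut out by a single $q_L$ is not $\tau_c$-bounded, so your own lemma does not give $w=\tau_p$ on it. (In fact discreteness is automatic for you, since a family uniformly discrete for a metric of $\tau_p\le w$ is already $w$-discrete --- which means the shells are doing no work there, and their actual job is never specified.) Second, and more seriously, the $k$-network property is asserted but not verified, and the verification is where the real difficulty lives: to trap a weakly compact $K$ inside a weakly open $U$ you must bound the functions on the compact supports $L$ of the defining measures by conditions of the form $q_D(g)\le m$ with $D\supset L$ drawn from the countable family $\DD$; but such $D$ need not be compact, so weak compactness of $K$ gives no a priori bound on $q_D$ over $K$, and your finitely many pieces $B\cap\{q_D\le m\}$ may fail to cover $K$.

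That missing uniform-boundedness statement is exactly the crux of the paper's argument (Step 1 of the proof of Theorem \ref{weak}): for a weakly convergent sequence one shows, via the auxiliary compact set $K'=K\cup\{x_n\}_{n\in\NN}$ and the boundedness of weakly convergent sequences in the Banach space $C(K')$, that some member $D_m$ of the decreasing chain of $k$-network sets shrinking to $L$ uniformly bounds almost all terms. The paper then avoids compacta altogether by invoking Guthrie's theorem \cite[Thm.~1]{Guth}: a countable family that works as a network for convergent sequences (a $cs$-network) already witnesses that the space is an $\aleph_0$-space, so only sequences, not arbitrary weakly compact sets, need to be handled. Your proposal contains neither this boundedness step nor a substitute for it; until you supply one (the $K'$ trick does extend to arbitrary weakly compact sets, so your route is salvageable, but it must be proved), the claim that your sliced family is a $k$-network is a genuine gap rather than bookkeeping.
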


\begin{proof}
Set $E:=C_c(X)$ and let $E_w$ be the space $C_c(X)$ endowed with the weak topology. Let $\DD$ be a countable closed $k$-network in $X$ closed under taking finite unions, and let $\mathcal{B}$ be a countable basis in $\RR$.
For every finite subset $F=\{ x_1,\dots,x_n\}$ of $X$, every finite subfamily $$\mathcal{U}=\{ U_1,\dots,U_n\}$$ of $\mathcal{B}$, each $D\in\DD$ and every $m\in \NN$, set
\begin{equation} \label{e-1}
A(F,\mathcal{U},D,m):=\{ f\in C(X): \; f(x_i)\in U_i, 1\leq i\leq n,\; \mbox{ and } \; f(D)\subset [-m,m] \}.
\end{equation}
Denote by $\mathcal{A}$ the countable family of all subsets of $E$ of the form (\ref{e-1}).
By \cite[Thm. 1]{Guth}, in order to prove the theorem it is enough to show that the family $\mathcal{A}$ satisfies the following claim.

{\bf Claim}. {\it For every $f_0\in E$, for every sequence $\{ f_n\}_{n\in\NN}$ converging to $f_0$ in $E_w$ and any neighborhood $W$ of $f_0$ in $E_w$ there exists $A\in \mathcal{A}$ such that $f_0\in A \subset W$ and $f_n\in A$ for almost all $n\in \NN$.}

Without loss of generality we may assume that $W$ is of the standard form, i.e. there are measures $\mu_1,\dots,\mu_s \in E'$ and $\varepsilon >0$ such that
\[
W = \{ f\in E: |\mu_i (f-f_0) |<\varepsilon, 1\leq i\leq s \}.
\]
Set $K:= \bigcup_{i=1}^s \mathrm{supp}(\mu_i)$. So $K$ is a compact subset of $X$ by Fact \ref{f1}(i).

Let $\{ D'_n\}_{n\in\NN}$ be an enumeration of the family $\{ D'\in\DD : K\subseteq D'\}$. For every $n\in\NN$, set $D_n :=\bigcap_{i=1}^n D'_i$. It follows that the decreasing sequence of sets  $\{ D_n\}_{n\in\NN}$ converges to the compact set $K$ in the sense that each neighborhood $O(K)$ of $K$ contains all but finitely many sets $D_n$.

{\it Step 1}. Let us show that there are $k,m\in\NN$ such that
\begin{equation} \label{1-1}
|f_i(x)| <m, \quad \forall x\in D_m, \; \forall i \geq k.
\end{equation}
Indeed, assuming the converse we choose a sequence $\{ x_n\}_{n\in\NN}$, with $x_n \in D_n$ for every $n\in\NN$, and a sequence $i_1 <i_2<\dots$ such that
\begin{equation} \label{1-2}
|f_{i_n}(x_n)| >n , \quad \forall n\in\NN.
\end{equation}
Since $\{ D_n\}_{n\in\NN}$ converges to the compact set $K$, all accumulation points of the sequence $\{ x_n\}_{n\in\NN}$ are in $K$.
In other words, the set
$$K' := K\cup \{ x_n\}_{n\in\NN}$$ is compact.
As the restriction map $f\mapsto f|_{K'}$ is continuous, we obtain that the sequence $S:= \{ f_{i_n}|_{K'}\}_{n\in\NN}$ in the Banach space $C(K')$ converges to $f_0|_{K'}$ in the weak topology of $C(K')$. Thus $S$ is bounded, that is there is $C>0$ such that
\[
|f_{i_n}(x)| <C, \quad \forall x\in K', \forall n\in\NN.
\]
In particular, $|f_{i_n}(x_n)| <C$ for every $n\in\NN$, that contradicts (\ref{1-2}). This proves (\ref{1-1}).

{\it Step 2}.  Fix $k,m\in\NN$ such that (\ref{1-1}) holds.
For every $1\leq i\leq s$ take a finite subset $F_i$ of $\mathrm{supp}(\mu_i)$ such that
\begin{equation} \label{1-3}
|\mu_i| \left(\mathrm{supp}(\mu_i)\setminus F_i \right) < \frac{\varepsilon}{3m},
\end{equation}
and for every $x_{i,j}\in F_i$ choose $U_{i,j} \in \mathcal{B}$ such that
\begin{equation} \label{1-4}
f_0 (x_{i,j}) \in U_{i,j} \; \mbox{ and } \; \mathrm{diam}(U_{i,j}) < \frac{\varepsilon}{3|F_i| \cdot \| \mu_i\|}.
\end{equation}
If $x_{i,j} =x_{k,l}$ for some $1\leq i<k\leq s$, we shall suppose that $U_{i,j}=U_{k,l}$. Finally we set
\[
A:= \{ f\in C(X): \; f(x_{i,j})\in U_{i,j}, \; x_{i,j}\in F_i, 1\leq i\leq s; \; \; f(D_m)\subset [-m,m] \}.
\]
Clearly, $A\in \mathcal{A}$. For each $f\in A$ and every $1\leq i\leq s$, (\ref{1-3}) and (\ref{1-4})   imply
\[
\begin{split}
|\mu_i (f-f_0)| & \leq \sum_{x_{i,j}\in F_i} |f(x_{i,j})-f_0(x_{i,j})| \cdot \| \mu_i\| \\
& +  \sum_{x_{i,j}\in \, \mathrm{supp}(\mu_i)\setminus F_i} |f(x_{i,j})-f_0(x_{i,j})| \cdot | \mu_i(\{ x_{i,j}\})|\\
& < |F_i| \cdot \frac{\varepsilon}{3|F_i| \cdot \| \mu_i\|} \cdot  \| \mu_i\| + 2m\cdot \frac{\varepsilon}{3m} = \varepsilon.
\end{split}
\]
Thus $A\subset W_0$, and (\ref{1-1}) shows that  $f_i (D_m)\subset [-m,m]$ for every $i\geq k$. Since $f_n \to f_0$ also in the pointwise topology, we obtain that $f_n\in A$ for all sufficiently large $n\in\NN$. This proves Claim and hence also the theorem.
\end{proof}
Consequently, the space $C_{c}(X)$ is a weakly $\aleph_{0}$-space for any metrizable and countable space $X$.
We end this section with the following conjecture
\begin{conjecture}
Let $X$ be a Tychonoff space. Then $C_c(X)$ is a weakly $\aleph$-space if and only if $C_c(X)$ is a weakly $\aleph_0$-space if and only if $X$ is a countable $\aleph_0$-space.
\end{conjecture}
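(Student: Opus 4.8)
To prove the conjecture, write $(\mathrm A)$ for ``$C_c(X)$ is a weakly $\aleph$-space'', $(\mathrm B)$ for ``$C_c(X)$ is a weakly $\aleph_0$-space'', and $(\mathrm C)$ for ``$X$ is a countable $\aleph_0$-space''. The plan is to dispose of the two easy implications $(\mathrm C)\Rightarrow(\mathrm B)\Rightarrow(\mathrm A)$ first and then to concentrate all the effort on the converse $(\mathrm A)\Rightarrow(\mathrm C)$, which is where the genuine difficulty (and the open Question~\ref{qu}) resides. The implication $(\mathrm C)\Rightarrow(\mathrm B)$ is precisely Theorem~\ref{weak}, so nothing new is required there, while $(\mathrm B)\Rightarrow(\mathrm A)$ is immediate, since a countable $k$-network is trivially $\sigma$-locally finite. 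Hence the whole content of the conjecture is the single implication $(\mathrm A)\Rightarrow(\mathrm C)$: from the weak $\aleph$-property of $C_c(X)$ one must recover both the countability of $X$ and the fact that $X$ carries a countable $k$-network.

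For $(\mathrm A)\Rightarrow(\mathrm C)$ I would proceed as follows. Assume $C_c(X)$ is a weakly $\aleph$-space and set $E:=C_c(X)$; being a weakly $\aleph$-space it is in particular a weakly $\sigma$-space. First, Lemma~\ref{l-Normal} shows that every compact subset of $X$ is scattered, and Proposition~\ref{p-Nec} supplies the structural data of a weakly $\sigma$-space: $(E,\sigma(E,E'))$ has countable pseudocharacter, admits a weaker separable metrizable locally convex topology, and $(E',\sigma(E',E))$ is separable. Unwinding the countable pseudocharacter through Fact~\ref{f1}(i) yields, exactly as in the proof of Lemma~\ref{2-Normal}, a countable family $\{K_n\}_{n}$ of compact (hence scattered) subsets of $X$ whose associated basic weak neighborhoods intersect in $\{0\}$, so that $A:=\bigcup_n K_n$ is dense in $X$.

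The aim is then to upgrade ``scattered'' to ``countable'' for each $K_n$: a countable compact Hausdorff space is metrizable and second countable, so once the compacta are known to be countable the set $A$ becomes a countable dense subset, and the weaker separable metrizable topology furnished by Proposition~\ref{p-Nec}(i) can be pushed onto $X$ to force $X$ itself to be countable, exactly as in the final paragraph of the proof of Theorem~\ref{tMain}. To obtain the $\aleph_0$-network on $X$ I would try to transfer the countable weak $k$-network of $E$ down to $X$, or, alternatively, to show that $(\mathrm A)$ forces $C_p(X)$ to be an $\aleph$-space and then invoke Sakai's theorem \cite{Sak}, quoted in the introduction, that $C_p(X)$ is an $\aleph$-space exactly when $X$ is countable.

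The hard part is the step upgrading ``scattered compact'' to ``countable compact'' \emph{without} the hemicompactness hypothesis that made Theorem~\ref{tMain} work. In the Fr\'echet case a fundamental sequence $(K_n)$ exhausts $X$, the space does not contain a copy of $\ell_1$, and Corollary~\ref{quasi} converts the weak $\aleph$-property into the weak $\aleph_0$-property (hence separability); a weaker metrizable topology then makes each scattered $K_n$ metrizable, hence countable, and $X=\bigcup_n K_n$ is countable. For a general Tychonoff $X$ no such exhaustion and no such Fr\'echet/density-condition machinery is available, and an uncountable scattered compactum (for instance a copy of $[0,\omega_1]$) is not excluded by Lemma~\ref{l-Normal} alone. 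Ruling out such compacta --- equivalently, answering Question~\ref{qu} affirmatively --- is the crux, and I would attack it by a finer analysis of the compactly supported measures in $E'$ together with the countable weak network, aiming to show that any compact $K\subset X$ admits only countably many points that the network can separate and is therefore countable. This last estimate is the main obstacle, and it is precisely the reason the statement is offered here as a conjecture rather than a theorem.
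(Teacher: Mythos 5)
This statement is stated in the paper as a \emph{conjecture}, not a theorem: the paper itself offers no proof, and the essential implication is precisely the open Question~\ref{qu} (strengthened further by the requirement that $X$ be an $\aleph_0$-space). Your proposal correctly settles the two easy implications --- $(\mathrm C)\Rightarrow(\mathrm B)$ is Theorem~\ref{weak} and $(\mathrm B)\Rightarrow(\mathrm A)$ is trivial --- and correctly locates the difficulty in $(\mathrm A)\Rightarrow(\mathrm C)$, but it does not prove that implication, so what you have is not a proof of the statement.

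Concretely, the gap is twofold. First, from the weak $\aleph$-property alone you obtain (via Lemma~\ref{l-Normal}, Proposition~\ref{p-Nec} and the argument of Lemma~\ref{2-Normal}) only that every compact subset of $X$ is scattered and that $X$ is separable with a dense subset $A=\bigcup_n K_n$; without hemicompactness there is no exhaustion $X=\bigcup_n K_n$, so separability does not yield countability, and an uncountable scattered compactum is not excluded by any result in the paper. Your proposed remedies do not close this: under $(\mathrm A)$ the space $(E,\sigma(E,E'))$ is only known to have a $\sigma$-locally finite $k$-network, not a countable one (that upgrade needs Lindel\"ofness, as in Proposition~\ref{pLindel}, which is not available here), and the passage to $C_p(X)$ fails because the $\aleph$-property does not descend to the coarser pointwise topology. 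Second, even granting countability of $X$, the conjecture asserts that $X$ is an $\aleph_0$-space, i.e.\ carries a countable $k$-network; your sketch of ``transferring the weak $k$-network of $E$ down to $X$'' is not an argument, and no mechanism for it is given in the paper. Your honest assessment at the end is accurate: the ``main obstacle'' you identify is exactly why the authors state this as a conjecture.
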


\section{One application}

Let $E$ be a separable Banach space and $S$ its closed unit ball endowed with the weak topology of $E$. In \cite[Theorem A]{edgar} Edgar and Wheller proved that $S$ is completely metrizable if and only if $S$ is a Polish space if and only if $S$ is metrizable and every closed subset of $S$ is a Baire space. We supplement this result.

For this purpose we introduce a property stronger than the property to be an $\aleph_0$-space by \cite[Theorem 11.4]{Mich}. We say that a topological space $X$ is an {\it $\aleph_1$-space} if $X$ is a continuous image under a compact-covering map from a Polish space $Y$. Every closed subspace of an $\aleph_1$-space is also an $\aleph_1$-space.

\begin{proposition}\label{latest}
 Let $E$ be a separable Banach space.
\begin{enumerate}
\item[{\rm (i)}] If  $E$ does not contain a copy of $\ell_{1}$, then the closed unit ball $S$ of $E$ is a Polish space in the weak topology of $E$ if and only if $E$ is a weakly $\aleph_{1}$-space.
\item[{\rm (ii)}] If $E$ contains a copy of $c_{0}$, then $E$ is not a weakly $\aleph_{1}$-space.
\end{enumerate}
\end{proposition}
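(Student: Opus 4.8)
The plan is to prove the two parts separately and to deduce (ii) from (i) applied to $c_0$; throughout I write $E_w:=(E,\sigma(E,E'))$ and use that the closed unit ball of a Banach space, being convex and norm-closed, is weakly closed (Mazur), and that linear isomorphisms are weak homeomorphisms. For the implication $E$ weakly $\aleph_1 \Rightarrow S$ Polish I would argue as follows. An $\aleph_1$-space is in particular an $\aleph_0$-space, so if $E$ is a weakly $\aleph_1$-space then it is a weakly $\aleph_0$-space; since $E$ is a Banach space omitting $\ell_1$, Corollary \ref{c-Equiv} gives that $E'$ is norm-separable. Norm-separability of $E'$ makes $\sigma(E,E')$ metrizable on the bounded set $S$: if $\{f_n\}$ is norm-dense in $\ball_{E'}$, then $d(x,y)=\sum_n 2^{-n}|f_n(x-y)|$ induces $\sigma(E,E')$ on $S$, the uniform control of each $f\in\ball_{E'}$ by the $f_n$ on the bounded set $S$ being where norm- (not merely \weakstar-) density is used. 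Since $S$ is weakly closed it is a closed subspace of $E_w$, hence itself a weakly $\aleph_1$-space, i.e.\ a compact-covering continuous image of a Polish space; being moreover separable and metrizable it is analytic, so a classical theorem of Christensen (a separable metrizable compact-covering image of a Polish space is Polish) yields that $S$ is Polish.

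For the converse implication of (i), assume $S$ is Polish in $\sigma(E,E')$. Multiplication by $n$ is a weak homeomorphism of $E$ carrying $S$ onto $nS=\{x:\|x\|\le n\}$, so each $nS$ is Polish and the topological sum $Y=\bigoplus_{n\in\NN} nS$ is Polish. The map $q\colon Y\to E_w$ that is the inclusion on each summand is a continuous surjection, and it is compact-covering: a weakly compact $K\subseteq E_w$ is weakly bounded, hence norm-bounded, so $K\subseteq nS$ for some $n$ and lifts to a compact subset of the $n$-th summand. Thus $E_w$ is a weakly $\aleph_1$-space. Note that this direction does not use the absence of $\ell_1$, which is in any case automatic here, since $S$ Polish forces $S$ metrizable and hence $E'$ separable.

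For (ii), if $E$ contains an isomorphic copy $F$ of $c_0$, then $F$ is a weakly closed subspace with $\sigma(E,E')\restriction F=\sigma(F,F')$ by Hahn–Banach, so a weakly $\aleph_1$-structure on $E$ would pass to the closed subspace $F$ and hence to $c_0$; it therefore suffices to show that $c_0$ is not a weakly $\aleph_1$-space. As $c_0'=\ell_1$ is separable, $c_0$ omits $\ell_1$, so by part (i) it is enough to prove that $(\ball_{c_0},\sigma(c_0,\ell_1))$ is not Polish, for which I would exhibit a weakly closed non-Baire subset (a closed subspace of a Polish space is Baire; this is the Edgar--Wheeler circle of ideas, \cite{edgar}). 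Take $P=\{x\in\ball_{c_0}: x_k\ge0 \text{ for all }k\}$, which is weakly closed, and $P_n=\{x\in P: x_k\le 1/2 \text{ for all }k\ge n\}$. Each $P_n$ is weakly closed, $P=\bigcup_n P_n$ since every $x\in c_0$ is eventually below $1/2$, and each $P_n$ is weakly nowhere dense in $P$: given $x\in P_n$ and a basic weak neighborhood determined by $f_1,\dots,f_m\in\ell_1$ and $\eps>0$, pick $K\ge n$ with $|f_i(e_K)|<\eps$ for all $i$ and replace the $K$-th coordinate of $x$ by $1$; the resulting point lies in $P\setminus P_n$ and in the neighborhood. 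Hence $P$ is meager in itself, so not Baire, and $(\ball_{c_0},\sigma(c_0,\ell_1))$ is not Polish.

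The two genuine obstacles are the nontrivial inputs. In (i) it is the appeal to Christensen's theorem: one must check that $S$ is separable, metrizable and analytic so that a compact-covering image of a Polish space is really Polish, and it is precisely the metrizability of $S$ that forces the hypothesis that $E$ omits $\ell_1$, via norm-separability of $E'$. In (ii) it is the construction of the meager-in-itself weakly closed set $P\subseteq(\ball_{c_0},\sigma(c_0,\ell_1))$, the verification of weak nowhere-density of each $P_n$ being the one real computation.
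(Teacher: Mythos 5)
Your proof is correct and follows the same overall architecture as the paper's: the implication from ``$S$ weakly Polish'' to ``weakly $\aleph_1$'' is obtained in both cases from the scaling trick $Y=\bigoplus_n nS_w$ with the canonical compact-covering map onto $E_w$, the converse rests in both cases on Corollary \ref{c-Equiv} (giving separability of $E'$, hence metrizability and separability of $S_w$) followed by Christensen's theorem, and (ii) is in both cases reduced to the failure of Polishness of the weak unit ball of $c_0$. Two local differences are worth recording. In the converse direction of (i), the paper does not pass to the closed subspace $S$: it builds an explicit compact resolution $K_{\alpha}=\bigcap_{k}\bigcup_{j\le n_k}B(x_j,k^{-1})$, $\alpha=(n_k)\in\NN^{\NN}$, on the Polish domain, pushes it forward by the compact-covering map to a family swallowing the weakly compact sets, and applies Christensen's theorem in its compact-resolution form; you instead restrict the $\aleph_1$-structure to the weakly closed set $S$ and quote the equivalent ``compact-covering image of a Polish space'' form of Christensen's theorem. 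These are the same argument packaged differently --- the paper's construction is exactly the proof that such an image carries a swallowing compact resolution. In (ii), the paper simply cites Edgar--Wheeler \cite[Theorem A, Examples (3)]{edgar} for the fact that $\ball_{c_0}$ is not weakly Polish, whereas you reprove it via the weakly closed, meager-in-itself set $P=\bigcup_n P_n$; your category computation is correct (each $P_n$ is weakly closed with empty relative interior in $P$ because the coordinates of every $f\in\ell_1$ vanish at infinity), so you gain a self-contained argument at the cost of a paragraph. Your closing observation that the hypothesis that $E$ omits $\ell_1$ is needed only for the implication from $\aleph_1$ to Polish, being automatic in the reverse direction, is exactly the content of Remark \ref{latest-1}.
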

\begin{proof}
(i) The closed unit ball $S$ endowed with the weak topology we denote by $S_w$.
Assume that $S_w$ is a Polish space. For each $n\in\mathbb{N}$ set $S_{n}:=nS_w$ and let $Y:=\bigoplus_{n}S_{n}$ be the topological direct sum of the sequence $(S_{n})_{n}$ of Polish spaces. Denote by $T$  the canonical mapping from $Y$ onto $E_w :=(E,\sigma(E,E'))$. Since every compact set of $E_w$ is contained in some $S_{m}$, the map $T$ is continuous and compact-covering. Conversely, assume that $E$ is a weakly $\aleph_{1}$-space and $T:Y\rightarrow E_w$ is a continuous compact-covering map. Denote by $B(x,r)$ the closed ball in $Y$ of radius $r$ centered at $x$. For a countable dense sequence $(x_{j})_{j\in\NN}$ in $Y$ and each $\alpha=(n_{k})\in\mathbb{N}^\mathbb{N}$, set $K_{\alpha}:=\bigcap_{k=1}^{\infty}\bigcup_{j=1}^{n_{k}}B(x_{j},k^{-1})$.
Then $\{K_{\alpha}:\alpha\in\mathbb{N}^\mathbb{N}\}$ is a family of compact sets in $Y$ covering $Y$ with  $K_{\alpha}\subset K_{\beta}$ whenever $\alpha\leq\beta$, and such that every compact set in $Y$ is contained in some $K_{\alpha}$. Set $W_{\alpha}:=T(K_{\alpha})$ for each $\alpha\in\NN^\NN$. Since $T$ is compact-covering and continuous, the sets $W_{\alpha}$ compose a  compact covering of $E_w$ such that every $\sigma(E,E')$-compact set is contained in some $W_{\alpha}$. On the other hand, $E_w$ is an $\aleph_{0}$-space, so we apply
Corollary \ref{c-Equiv} to deduce that the strong dual $E'$ is separable. Hence $S_w$ is metrizable (see Lemma \ref{sep}) and separable. Now by Christensen's theorem, see \cite[Theorem 6.1]{kak}, the space $S_w$ is a Polish space.

(ii) The closed unit ball $B$ of $c_{0}$ is  metrizable and separable in the weak topology. On the other hand, by  \cite[Theorem A, Examples (3)]{edgar} $B$ is  not a Polish space in the weak topology.  Now the proof of (i) (involving the Christensen's theorem) applies to complete the case (ii).
\end{proof}
\begin{remark}\label{latest-1} \em{
Note that $\ell_{1}$ is a weakly $\aleph_{1}$-space (by the Schur property) but the unit ball in $\ell_{1}$ is not a Polish space, see \cite[Example 9]{edgar}. So the assumption on $E$ in item (i) that $E$ does not contain a copy of $\ell_1$ is essential. Recall also that for a Banach space $E$ with separable bidual $E''$ the unit ball $S$ in $E$ is a Polish space in the weak topology by Godefroy's theorem, see \cite[Theorem 12.55]{fabian}.}
\end{remark}
\begin{remark}\label{latest-2} \em{
Let $K$ be a countably infinite compact space. Then $C(K)$ contains a copy of $c_{0}$. Hence $C(K)$ is not a weakly $\aleph_{1}$-space by (ii),  but $C(K)$  is a weakly $\aleph_0$-space by Theorem \ref{tMain}.}
\end{remark}

\bibliographystyle{amsplain}

\end{document}